\newtheorem{theorem}{Theorem}[section]
\newtheorem{corollary}[theorem]{Corollary}
\newtheorem{lemma}[theorem]{Lemma}
\newtheorem{prop}[theorem]{Proposition}
\theoremstyle{definition}
\newtheorem{define}[theorem]{Definition}
\newtheorem{example}[theorem]{Example}
\newtheorem{remark}[theorem]{Remark}
\newcommand{\F}{F}
\newcommand{\T}{T}
\newcommand{\M}{M}
\newcommand{\R}{R}
\newcommand{\related}{$\rho$-related }
\newcommand{\innerprod}{\langle - , - \rangle}
\newcommand{\from}{\colon}
\newcommand{\defeq}{\coloneqq}
\newcommand{\Zd}{\mathbb Z^d}
\newcommand{\Ztd}{\mathbb Z^{2d}}
\newcommand{\Rd}{\mathbb R^d}
\newcommand{\Rtd}{\mathbb R^{2d}}
\newcommand{\MdR}{M(d, \mathbb R)}
\newcommand{\MtdR}{M(2d, \mathbb R)}
\newcommand{\MtdZ}{M(2d, \mathbb Z)}
\newcommand{\GLdR}{\mathrm{GL}(d, \mathbb R)}
\newcommand{\GLtdR}{\mathrm{GL}(2d, \mathbb R)}
\newcommand{\SLtdR}{\mathrm{SL}(2d, \mathbb R)}
\newcommand{\GLtdZ}{\mathrm{GL}(2d, \mathbb Z)}
\newcommand{\SptdR}{\mathrm{Sp}(2d, \mathbb R)}
\newcommand{\UtdR}{U(2d, \mathbb R)}
\newcommand{\MptdR}{\mathrm{Mp}(2d, \mathbb R)}
\newcommand{\MpctdR}{\mathrm{Mp^c}(2d, \mathbb R)}
\newcommand{\Falg}{M^1(\mathbb R^d)}
\newcommand{\blangle}{\bigl\langle}
\newcommand{\brangle}{\bigr\rangle}
\newcommand{\bl}{\bigl}
\newcommand{\br}{\bigr}
\newcommand{\SpthetatdR}{\mathrm{Sp}_\theta(2d, \mathbb R)}
\DeclareMathOperator{\pf}{\mathrm{pf}}
\newcommand\xs{0.25}
\title{On the structure of multivariate Gabor systems and a result on Gaussian Gabor frames}
\author{Michael Gjertsen and Franz Luef}
\begin{document}

\maketitle

\begin{abstract}
    We introduce an equivalence relation on the set of lattices in $\Rtd$ such that equivalent lattices support identical structures of Gabor systems, up to unitary equivalence—a notion we define. These equivalence classes are parameterized by symplectic forms on $\Rtd$ and they consist of lattices related by symplectic transformations. This implies that $2d^2 - d$ parameters suffice to describe the possible structures of Gabor systems over lattices in $\Rtd$, as opposed to the $4d^2$ degrees of freedom in the choice of lattice. We also prove that (modulo a minor complication related to complex conjugation) symplectic transformations are the only linear transformations of the time-frequency plane which implement equivalences of this kind, thereby characterizing symplectic transformations as the structure-preserving transformations of the time-frequency plane in the context of Gabor analysis.
    
    In addition, we investigate the equivalence classes that have separable lattices $A_1 \Zd \times A_2 \Zd$ as representatives and find that the parameter space in this case is $d^2$-dimensional. We provide an explicit example showing that non-separable lattices with irrational lattice points can behave exactly like separable and rational ones. 
    
    This approach also allows us to formulate and prove a higher-dimensional variant of the Lyubarskii-Seip-Wallstén Theorem for Gaussian Gabor frames. This gives us, for a large class of lattices in $\Rtd$ (including all symplectic ones), necessary and sufficient conditions for $d$-parameter families of Gaussians to generate Gabor frames. 
\end{abstract}

\section{Introduction}

Symplectic transformations are ubiquitous in mathematical and theoretical physics. The closely related notion of the metaplectic group and its representation as a group of unitary operators on $L^2(\Rd)$ appears naturally in time-frequency analysis and quantum mechanics, as well as in more classical subjects, such as optics. In time-frequency analysis, symplectic and metaplectic transformations are commonly used to extend the validity of results to more general domains or lattices in the time-frequency plane $\Rtd$. This works because the effects of a symplectic transformation of the time-frequency plane can be counteracted by a metaplectic transformation of $L^2(\Rd)$. This principle is known as \textit{symplectic covariance} and takes the form of Theorem \ref{thm_covariance} in the context of Gabor analysis. We write $\mathcal G(g, \Lambda)$ for the Gabor system generated by an atom $g$ over a lattice $\Lambda$ and $S_{g, g}^\Lambda$ for the associated frame operator on $L^2(\Rd)$, while $\SptdR$ denotes the group of $2d \times 2d$ symplectic matrices over $\mathbb R$. (See Section \ref{prelim_TF_Gabor} for a brief introduction to Gabor analysis and Section \ref{prelim_symplectic} for the symplectic group.)

\begin{theorem}[Symplectic Covariance]
    \label{thm_covariance}
    Let $S \in \SptdR$. Then, there exists a unitary operator $U$ on $L^2(\Rd)$ such that for any lattice $\Lambda \subset \Rtd$ and any atom $g \in L^2(\Rd)$, the Gabor system $\mathcal G(g, \Lambda)$ is a frame if and only if $\mathcal G(Ug, S\Lambda)$ is a frame, and, in this case, their frame operators are related by conjugation by $U$, i.e.\ we have $US^\Lambda_{g, g}U^* = S^{S\Lambda}_{Ug, Ug}$.
\end{theorem}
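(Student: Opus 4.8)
The plan is to build $U$ from the metaplectic representation and then push everything through the metaplectic intertwining relation. Write $\pi(z)$ for the time-frequency shift by $z\in\Rtd$, so that $\mathcal G(g,\Lambda)=\{\pi(\lambda)g:\lambda\in\Lambda\}$ and the frame operator is $S^\Lambda_{g,g}f=\sum_{\lambda\in\Lambda}\langle f,\pi(\lambda)g\rangle\,\pi(\lambda)g$. The input I would take (recorded in Section~\ref{prelim_symplectic}) is the covariance property of the metaplectic representation: to every $S\in\SptdR$ one can associate a unitary operator $U=\mu(S)$ on $L^2(\Rd)$, together with a phase $c_S\colon\Rtd\to\{z\in\mathbb C:|z|=1\}$, such that $U\,\pi(z)\,U^{*}=c_S(z)\,\pi(Sz)$ for all $z\in\Rtd$.

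Granting this, the first observation is that $S\Lambda$ is again a lattice in $\Rtd$, since $S\in\GLtdR$ carries any discrete cocompact subgroup to a discrete cocompact subgroup. Next, for each $\lambda\in\Lambda$, unitarity of $U$ and the covariance identity give
$$U\pi(\lambda)g \;=\; \bigl(U\pi(\lambda)U^{*}\bigr)\,Ug \;=\; c_S(\lambda)\,\pi(S\lambda)\,Ug .$$
Hence the system $U\mathcal G(g,\Lambda)=\{U\pi(\lambda)g\}_{\lambda\in\Lambda}$ is obtained from $\mathcal G(Ug,S\Lambda)=\{\pi(\mu)Ug\}_{\mu\in S\Lambda}$ by the reindexing $\lambda\mapsto S\lambda$ (a bijection of $\Lambda$ onto $S\Lambda$) together with multiplication of each vector by the unimodular scalar $c_S(\lambda)$. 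Since neither applying a fixed unitary to all elements of a system nor multiplying each element by a constant of modulus $1$ changes whether the system is a frame (or its frame bounds), $\mathcal G(g,\Lambda)$ is a frame precisely when $U\mathcal G(g,\Lambda)$ is, which in turn is precisely when $\mathcal G(Ug,S\Lambda)$ is; this is the claimed equivalence.

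Assuming now that both systems are frames, I would obtain the frame-operator identity by a direct computation. For $f\in L^2(\Rd)$, using continuity of $U$ to interchange it with the norm-convergent frame series, then $\langle U^{*}f,\pi(\lambda)g\rangle=\langle f,U\pi(\lambda)g\rangle$, and then $U\pi(\lambda)g=c_S(\lambda)\,\pi(S\lambda)Ug$,
$$US^\Lambda_{g,g}U^{*}f=\sum_{\lambda\in\Lambda}\langle U^{*}f,\pi(\lambda)g\rangle\,U\pi(\lambda)g=\sum_{\lambda\in\Lambda}\langle f,U\pi(\lambda)g\rangle\,U\pi(\lambda)g=\sum_{\lambda\in\Lambda}|c_S(\lambda)|^{2}\,\langle f,\pi(S\lambda)Ug\rangle\,\pi(S\lambda)Ug,$$
and since $|c_S(\lambda)|^{2}=1$ and $\lambda\mapsto S\lambda$ enumerates $S\Lambda$, the last sum is exactly $S^{S\Lambda}_{Ug,Ug}f$.

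The only substantive ingredient is the metaplectic covariance relation itself — the existence of the unitary $U$ attached to $S$ and of the phase $c_S$ — and the main point to handle with care is precisely the presence of this phase: one must not pretend that $U$ conjugates $\pi(z)$ exactly onto $\pi(Sz)$, but rather carry the unimodular factor $c_S(\lambda)$ through and use only $|c_S(\lambda)|=1$. That relation is classical (it can be verified on a generating set of $\SptdR$ — for instance the Fourier transform, chirp multiplications, and dilations — or extracted from the general theory of the metaplectic representation), so I would relegate it to the preliminaries, after which the proof of Theorem~\ref{thm_covariance} proper is just the bookkeeping above.
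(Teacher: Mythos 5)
Your proposal is correct and follows essentially the same route as the paper: lift $S$ to a unitary $U$ via the metaplectic covariance relation and push the Gabor system and its frame operator through the intertwining identity. The only cosmetic difference is that the paper works with the symmetrized shifts $\rho(z)$, for which $U\rho(z)U^*=\rho(Sz)$ holds exactly, whereas you keep the ordinary shifts $\pi(z)$ and carry the unimodular phase $c_S(\lambda)$ through explicitly — both versions cancel the phase in precisely the same way.
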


\noindent The special case $\Lambda = \alpha \Ztd$ ($\alpha > 0$) appears in the now foundational monograph by Gröchenig \cite[Proposition~9.4.4]{Grochenig_book} and the general case has been championed by de Gosson \cite[Proposition~163]{deGosson_book} \cite{deGossonMauriceA.2015HdoG}. Gröchenig uses the special case  to extend results about lattices of the form $\alpha \Zd \times \beta \Zd$ ($\alpha, \beta > 0$) to lattices obtained from these by symplectic transformations, while de Gosson has used it to relate the frame sets of different Gaussians and to investigate deformations of Gabor systems \cite{deGossonMauriceA.2015HdoG}. The role of metaplectic operators in Gabor analysis has also recently been deepened by the work of Cordero and Giacchi \cite{CorderoGiacchi} and Führ and Shafkulovska \cite{FührHartmut2024Tmao}. We are, however, under the impression that the full potential of symplectic covariance is still far from being realized. In this article, we advocate for and extend the power and centrality of this concept for the study of multivariate Gabor systems. Our approach is multifaceted:\
\begin{itemize}
    \item We explain why this is a natural result in Gabor analysis by connecting it to the foundational commutation relations (Section \ref{sec_lattices_and_forms}).
    \item We extend the result by showing that it is compatible with the duality that is central to Gabor analysis (Corollary \ref{corollary_main}).
    \item We reformulate it in a manner that is better suited to concrete computations and applications (Theorem \ref{theorem_main}). At the same time, we explain why much of the subtlety of the metaplectic representation is irrelevant, thus simplifying its presentation (Section \ref{prelim_symplectic} and Appendix \ref{appendix}).
    \item We use it to introduce an equivalence relation on the set of lattices in $\Rtd$ that identifies lattices which support identical structures of Gabor systems. We also parameterize these equivalence classes in a natural way and explain how to easily determine the equivalence class of any given lattice. This gives us a handle on the relevant parameters for the structures of Gabor systems in arbitrary dimensions. (See Definitions \ref{def_symplectically_related} and \ref{definition_main} along with the subsequent discussion.) 
    \item We use this framework to formulate and prove a higher-dimensional variant of the Lyubarskii-Seip-Wallstén Theorem for Gaussian Gabor frames (Theorem \ref{Gaussian_Gabor_frames_v2}).
    \item We prove a converse of symplectic covariance, thereby characterizing symplectic transformations (along with a transformation related to complex conjugation) as precisely those linear transformations of the time-frequency plane which preserve the structure of Gabor systems  (Section \ref{sec_necessity} and Theorem \ref{theorem_converse}).
    \item We further develop the connection between symplectic geometry and Gabor analysis by showing that separable lattices (lattices of the form $A_1\Zd \times A_2 \Zd$) are closely related to the well-known notion of transversal Lagrangian planes (Proposition \ref{prop_separable_lattices_geometric}).
\end{itemize}

Let us begin by turning the conclusion of the theorem under consideration into a definition. Given two lattices $\Lambda$ and $\Lambda'$ in $\Rtd$, we will say that the \textit{structure of Gabor systems} over $\Lambda$ is \textit{unitarily equivalent} to the structure of Gabor systems over $\Lambda'$ if there exists a unitary operator $U$ on $L^2(\Rd)$ such that $\mathcal G(g, \Lambda)$ is a Bessel sequence  if and only if $\mathcal G(Ug, \Lambda')$ is a Bessel sequence, and, moreover, the (mixed-type) frame operators are related by conjugation:\
\begin{align*}
    S_{Ug, Uh}^{\Lambda'} = U S^\Lambda_{g, h} U^* \quad \text{whenever } \mathcal G(g, \Lambda) \text{ and } \mathcal G(h, \Lambda) \text{ are Bessel sequences.}
\end{align*}
This further implies that $\mathcal G(g, \Lambda)$ is a frame if and only if $\mathcal G(Ug, \Lambda')$ is a frame and that their optimal frame bounds are equal. See Definition \ref{definition_main} and the subsequent discussion for details. ``The structure of Gabor systems over $\Lambda$'' quickly becomes a mouthful, so we abbreviate this further by saying that the \textit{Gabor structures} over $\Lambda$ and $\Lambda'$ are unitarily equivalent.

If we choose lattice matrices for $\Lambda$ and $\Lambda'$, say $\Lambda = A \Ztd$ and $\Lambda' = B \Ztd$, we will see that the Gabor structures over $\Lambda$ and $\Lambda'$ are unitarily equivalent whenever $A^T J A = B^T J B$, where $J$ is the standard symplectic matrix defined in Equation \eqref{std_symp_matrix}. This gives us a sufficient condition for unitary equivalence that is exceedingly simple to check. This is the content of Theorem \ref{theorem_main}. Moreover, as we explain in Appendix \ref{appendix}, one can often write down a unitary operator which implements this equivalence explicitly:\ one can take it to be any metaplectic operator associated to the (guaranteed to be symplectic) matrix $AB^{-1}$. We will also see that it is not really necessary to invoke the full abstract machinery of the metaplectic group.

Theorem \ref{theorem_main} implies that the entries of the matrix $A^T J A$ uniquely determine the structure of Gabor systems over $A \Ztd$. Matrices of the form $A^T J A$ with $A \in \GLtdR$\footnote{See the beginning of Section \ref{sec_prelims} for our notation regarding spaces of matrices.} are precisely the invertible and antisymmetric matrices. This means that, although the space $\GLtdR$ of all lattice matrices is $(2d)^2$-dimensional, the possible structures of Gabor systems depend upon at most $(2d)(2d-1)/2 = 2d^2 - d$ parameters. In Section \ref{sec_geometric_content}, we relate these parameters to the geometry of the lattice $A\Ztd$, and we explain how they appear as the central quantities in the multivariate extension, due to Bourouihiya \cite{Bourouihiya}, of the famous Lyubarskii-Seip-Wallstén result on Gaussian Gabor frames. We then generalize this result in Theorem \ref{Gaussian_Gabor_frames_v2}. We also explore the case of separable lattices, first from a computational point of view in Section \ref{sec_separable_lattices} and then from a geometrical point of view in Section \ref{subsec_separable_geometric}.

As an illustration of the power of these results, consider the lattice $A \mathbb Z^4$, where
\begin{align*}
    A = \begin{pmatrix}
        1 & \pi^2 & \frac{\pi}3 & \pi
        \\[\xs em] \pi^2 & 1 & \pi & \frac{\pi}2 
        \\[\xs em] 0 & \pi & \frac13 & 1
        \\[\xs em] \pi & 0 & 1 & \frac12
    \end{pmatrix}.
\end{align*}
We will show that the simple calculation
\begin{align*}
    A^T J A = \begin{pmatrix}
        0 & 0 & \frac13 & 1
        \\[\xs em] 0 & 0 & 1 & \frac12 
        \\[\xs em] -\frac13 & -1 & 0 & 0
        \\[\xs em] -1 & -\frac12 & 0 & 0
    \end{pmatrix}
\end{align*}
implies that the structure of Gabor systems over $A\mathbb  Z^4$ is unitarily equivalent to the structure of Gabor systems over the separable and rational (!) lattice
\begin{align*}
    \begin{pmatrix}
        1 & 0 & 0 & 0 
        \\[\xs em] 0 & 1 & 0 & 0 
        \\[\xs em] 0 & 0 & \frac13 & 1 
        \\[\xs em] 0 & 0 & 1 & \frac12
    \end{pmatrix} \mathbb Z^4.
\end{align*}
Moreover, by another simple calculation involving these matrices, we can immediately conclude that the equivalence is implemented by the unitary operator $U$ defined by
\begin{align*}
    Uf(t) = \frac{1}{\pi} \int_{\mathbb R^2} f(x) e^{i(t \cdot t - 2 x \cdot t + x \cdot Mx)} \,dx, \quad \text{where } M = \begin{pmatrix}
        1 & \pi^2 \\ \pi^2 & 1
    \end{pmatrix},
\end{align*}
with $f \in L^2(\mathbb R^2)$ and $t \in \mathbb R^2$. The details can be found in Example \ref{example_main}.

As another example, consider the lattice $B \mathbb Z^4$, where
\begin{align*}
    B = \begin{pmatrix}
        3 & 0 & \frac12 & 0
        \\[\xs em] 0 & 1 & 0 & 1 
        \\[\xs em] 14 & 1 & \frac52 & 1
        \\[\xs em] 3 & \frac{14}3 & \frac12 & 5
    \end{pmatrix}.
\end{align*}
By our higher-dimensional variant of the Lyubarskii-Seip-Wallstén theorem, the simple calculation 
\begin{align*}
    B^T J B = \begin{pmatrix}
        0 & 0 & \frac12 & 0
        \\[\xs em] 0 & 0 &  0 & \frac13 
        \\[\xs em] -\frac12 & 0 & 0 & 0
        \\[\xs em] 0 & -\frac13 & 0 & 0
    \end{pmatrix}
\end{align*}
and the fact that $\tfrac12, \tfrac13 \in (-1, 1)$, implies that, for any nonzero $a, b \in \mathbb R$, the Gaussian
\begin{align*}
    g(t) = \exp \left( - \pi t^T \begin{pmatrix}
        \frac{a^2 - i (42a^4 + 5) }{9a^4+1} & -i 
        \\[\xs em] -i & \frac{3b^2 - i (14 b^4 + 135) }{3(b^4 + 9)}
    \end{pmatrix} t \right)
\end{align*}
generates a Gabor frame over $B \mathbb Z^4$. For more details, see Example \ref{Gaussian_example}.

Here are two other examples of lattices in $\mathbb Z^4$ (from Example \ref{Gaussian_example_2}) for which we get two-parameter families of Gaussian Gabor frames:\ $C\mathbb Z^4$ and $D\mathbb Z^4$, where 
\begin{align*}
    C = \begin{pmatrix}
        -\frac{14}{3} & -\frac{4}{21} & \frac{1}{25} & \frac{2}{5} 
        \\[\xs em] \frac{16}{3} & \frac{1}{21} & -\frac{2}{25} & \frac{1}{5} 
        \\[\xs em] -\frac{14}{3} & -\frac{11}{24} & \frac{1}{100} & \frac{19}{40} 
        \\[\xs em] \frac{91}{12} & -\frac{1}{6} & -\frac{13}{200} & \frac{7}{20}
    \end{pmatrix} \quad \text{and} \quad D = \begin{pmatrix}
        2\pi & -\frac{17}{9} & -\frac{2}{65} & \frac{3\sqrt{2}}{13} 
        \\[\xs em] \frac{\pi}{3} & -2 & \frac{3}{65} & \frac{2\sqrt{2}}{13}
        \\[\xs em] \frac{63\pi}{8} & -\frac{47}{12} & -\frac{57}{520} & \frac{33}{26\sqrt{2}} 
        \\[\xs em] -\frac{11\pi}{4} & -\frac{37}{8} & \frac{3}{20} & \frac{3}{4\sqrt{2}}
    \end{pmatrix}.
\end{align*}
As before, the proof of this amounts to the simple calculations
\begin{align*}
    C^T J C = \begin{pmatrix}
        0 & 0 & \frac25 & 0
        \\[\xs em] 0 & 0 &  0 & \frac17 
        \\[\xs em] -\frac25 & 0 & 0 & 0
        \\[\xs em] 0 & -\frac17 & 0 & 0
    \end{pmatrix} \quad \text{and}  \quad D^T J D = \begin{pmatrix}
        0 & 0 & \frac{\pi}5 & 0
        \\[\xs em] 0 & 0 &  0 & -\frac{\sqrt{2}}{3} 
        \\[\xs em] -\frac{\pi}5 & 0 & 0 & 0
        \\[\xs em] 0 & \frac{\sqrt{2}}{3} & 0 & 0
    \end{pmatrix}
\end{align*}
and the observation that $\tfrac{2}{5}, \tfrac{1}{7}, \tfrac{\pi}{5}, - \tfrac{\sqrt{2}}{3} \in (-1, 1)$. We can write down the Gaussians explicitly in these cases as well, but the expressions would not be pretty.

We emphasize that there is nothing special about $\mathbb Z^4$; the results illustrated by these examples hold in any dimension. For a lattice in $\Ztd$, we obtain $d$-parameter families of Gaussians. Recently, multivariate Gaussian Gabor frames have been used in work related to optimal sphere packings \cite{manin_marcolli} and neuroscience \cite{liontou_marcolli_1,liontou_marcolli_2}. We plan to investigate the relevance of our results on Gaussian Gabor frames for these problems.

This connection between lattices and symplectic forms appears quite explicitly in the currently unfolding connection between Gabor analysis and modules over noncommutative tori \cite{Franz_projections_1, Franz_projections_2, Jakobsen_Franz, Austad_Enstad}. The noncommutative tori that appear in Gabor analysis are parameterized precisely by symplectic forms (in the guise of invertible and antisymmetric matrices):\ for each lattice $A \Ztd$, we obtain an action of the noncommutative torus determined by the symplectic form $A^T J A$ on (a completion of) the Feichtinger algebra on $\Rd$. In this context, the fact that the structure of Gabor systems over $A\Ztd$ is determined by the symplectic form $A^T J A$ can be understood in terms of the structure of projections in the associated noncommutative torus. We will not pursue this connection here, but we mention it as a potentially fruitful direction for further research. In collaboration with Chakraborty, we have begun investigating the role of metaplectic operators in this setting in \cite{Chakraborty_Franz, masters}. The latter reference contains a basic introduction to the subject.

Before we proceed, we wish to thank the anonymous reviewer for very thorough and valuable feedback—the article has been significantly restructured based on the suggestions.

\section{Preliminaries}
\label{sec_prelims}

In this section, we give brief and selective introductions to time-frequency analysis, Gabor analysis, symplectic linear algebra and metaplectic operators. For much more complete treatments of these topics, see Gröchenig \cite{Grochenig_book} and de Gosson \cite{deGosson_book}. Readers who are already familiar with metaplectic operators should nevertheless have a look at Definition \ref{rho_related}, where we define $\rho$-relatedness. This is a term we introduce to bypass the subtleties of the metaplectic group. 

Let us briefly establish some basic notation and terminology. We will write $\MtdR$ for the set of all $2d \times 2d$ matrices with entries in $\mathbb R$, $\GLtdR$ for the subset (and group) of invertible matrices and $\SLtdR$ for the subgroup of invertible matrices with determinant equal to one. We will freely use variants of this notation, such as $\MdR$, $\GLdR$, $\MtdZ$ and $\GLtdZ$. We require elements of $\GLtdZ$ to be invertible in $\MtdZ$, so that an element of $\GLtdZ$ is a $2d \times 2d$ invertible matrix with integer entries whose inverse also has only integer entries. A matrix $M$ is called \textit{antisymmetric/skew symmetric} if $M^T = -M$. We will write $\cdot$ and $\innerprod$ for the standard inner products on $\Rd$ and $L^2(\Rd)$, respectively—we take the latter to be linear in the first argument. The term \textit{phase factor} refers to a complex number of unit length, i.e.\ $\alpha \in \mathbb C$ with $|\alpha| = 1$.

\subsection{Time-Frequency Analysis and Gabor Frames}
\label{prelim_TF_Gabor}

For $x, \omega \in \Rd$, we define the \textit{translation} and \textit{modulation} operators $T_x$ and $M_\omega$ on $L^2(\Rd)$ by
\begin{align*}
    T_x f(t) = f(t-x) \quad \text{and} \quad M_\omega f(t) = e^{2\pi i t \cdot \omega} f(t),
\end{align*}
where $f \in L^2(\Rd)$. For $z = (x, \omega) \in \Rtd$, the composition $\pi(z) \coloneqq M_\omega T_x$ is referred to as a \textit{time-frequency shift}. We will also work with \textit{symmetrized time-frequency shifts},
\begin{align*}
    \rho(z) \coloneqq e^{- \pi i x \cdot \omega} \pi(z) = (M_{\omega/2} T_{x/2})(T_{x/2}M_{\omega/2}) = (T_{x/2}M_{\omega/2})(M_{\omega/2} T_{x/2}).
\end{align*}
These are also known as \textit{Heisenberg-Weyl operators}. The equality of the various expressions for $\rho(z)$ follows from the relation
\begin{align*}
    T_x M_\omega = e^{-2 \pi i x \cdot \omega} M_\omega T_x,
\end{align*}
which is easily verified by evaluating both sides at an arbitrary $f \in L^2(\Rd)$. This relation further implies that, for $z = (x, \omega), w = (y, \eta) \in \Rtd$, we have
\begin{equation}
\begin{aligned}
    \pi(z) \pi(w) &= e^{2 \pi i (y \cdot \omega - x \cdot \eta)} \pi(w) \pi(z) \\
    \text{and} \quad  \rho(z) \rho(w) &= e^{2 \pi i (y \cdot \omega - x \cdot \eta)} \rho(w) \rho(z).
\end{aligned}
\label{commutation_relations}
\end{equation}
These identities are central to Gabor analysis—we refer to them as \textit{the commutation relations}. If we define
\begin{align}
    \label{std_symp_matrix}
    J \defeq \begin{pmatrix} 0 & I \\ -I & 0 \end{pmatrix},
\end{align}
where $I$ denotes the $d \times d$ identity matrix, then we can write the commutation relations as
\begin{align}
    \label{std_commutation_relation_with_J}
    \pi(z) \pi(w) = e^{2 \pi i w^T J z} \pi(w) \pi(z) \quad \text{for } z, w \in \Rtd,
\end{align}
and likewise for the symmetrized time-frequency shifts. The following properties of $J$ are essential and will frequently be used without mention:\
\begin{align*}
    J^2 = -I \quad \text{and} \quad J^T = J^{-1} = -J
\end{align*}
(where $I$ now denotes the $2d \times 2d$ identity matrix).

In Gabor analysis, we fix a \textit{lattice} $\Lambda \coloneqq A \Ztd$, where $A \in \GLtdR$, an \textit{atom} $g \in L^2(\Rd)$ and consider the \textit{Gabor system} 
\begin{align*}
    \mathcal G(g, \Lambda) \coloneqq \{ \pi(z) g : z \in \Lambda \} = \{ \pi(Ak) g : k \in \Ztd \}.
\end{align*}
In this context, we call $A$ a \textit{lattice matrix} for $\Lambda$ and say that $A$ \textit{determines} the lattice $\Lambda$. Some of our results are best formulated by emphasizing lattice matrices (as opposed to lattices), so we find it convenient to also introduce the notation $\mathcal G(g, A) \coloneqq \mathcal G(g, A\Ztd)$. 

Any given lattice $\Lambda \subset \Rtd$ has multiple distinct lattice matrices. The following remark contains a precise characterization of the redundancy.

\begin{remark}[Redundancy of Lattice Matrices]
    \label{remark_redundancy_of_lattice_matrices}
    For any $A, B \in \GLtdR,$ the following statements are equivalent. 
    \begin{itemize}
        \item[(a)] $A$ and $B$ determine the same lattice:\ $A \Ztd = B \Ztd$.
        \item[(b)] $A^{-1}B \in \GLtdZ$, i.e.\ $B = A \M$ for some $\M \in \GLtdZ$.
    \end{itemize}
    In particular, if $\Lambda \subset \Rtd$ is a lattice and $A$ is any lattice matrix for $\Lambda$, then the set of all lattice matrices for $\Lambda$ is given by $A \GLtdZ$. All of this follows from the equivalences
    \begin{align*}
        A \Ztd = B \Ztd \iff \Ztd = A^{-1}B \Ztd \iff A^{-1}B \in \GLtdZ.
    \end{align*}
    Geometrically, $\Ztd \mapsto \M \Ztd$ represents an additive permutation of $\Ztd$, and so $\Lambda = A\Ztd \mapsto A \M \Ztd$ represents an additive permutation of $\Lambda$, i.e.\ an automorphism of $\Lambda$ as an abelian group.
\end{remark}

The central question in Gabor analysis is the following:\ for which Gabor systems $\mathcal G(g, A)$ can we find constants $C_1, C_2 > 0$ such that 
\begin{align*}
    C_1 \|f\|^2 \leq \sum_{k \in \Ztd} | \langle f, \pi(Ak) g \rangle |^2 \leq C_2 \|f\|^2 \quad \text{for all } f \in L^2(\Rd)?
\end{align*}
A Gabor system $\mathcal G(g, A)$ for which the upper bound holds is called a \textit{Bessel sequence}, and if the lower bound holds as well, we call it a (\textit{Gabor}) \textit{frame}. For a Bessel sequence, the optimal choice of $C_2$ is called the \textit{optimal Bessel bound}, and the optimal choices of $C_1$ and $C_2$ for a frame are called the \textit{optimal frame bounds}. Note that we always have $|\langle f, \pi(Ak) g \rangle| = |\langle f, \rho(Ak) g \rangle|$, so that nothing of essence changes if we replace time-frequency shifts with their symmetrized versions.

For a Bessel sequence $\mathcal G(g, A)$, we can introduce the operators
\begin{alignat*}{3}
\begin{split}
C_g^A \from L^2(\Rd) &\to \ell^2(\Ztd) \\ \phantom{\sum\nolimits_k} f &\mapsto \bl( \langle f, \pi(Ak) g \rangle \br)_k 
\end{split} &\qquad &\text{and} &\qquad \begin{split} D_g^A \from \ell^2(\Ztd) &\to L^2(\Rd) \\ (a_k)_k &\mapsto \sum\nolimits_k a_k \pi(Ak) g, \end{split}
\end{alignat*}
referred to as \textit{analysis} and \textit{synthesis operators}, respectively. The sums defining $D_g^A$ can be shown to converge unconditionally. Given two Bessel sequences $\mathcal G(g, A)$ and $\mathcal G(h, A)$, the composition $S_{g, h}^A \coloneqq D_h^A \circ C_g^A$ is called a (\textit{mixed-type}) \textit{frame operator}. Explicitly, $S_{g, h}^A$ is the operator
\begin{align*}
    S_{g, h}^A \from L^2(\Rd) &\to L^2(\Rd) \\
    f &\mapsto S_{g, h}^A(f) = \sum_{k \in \Ztd} \langle f, \pi(Ak) g \rangle \pi(Ak) h.
\end{align*}
Exchanging time-frequency shifts for their symmetrized versions does have an effect on the synthesis and analysis operators, but the additional phase factors that appear will cancel out in the composition $D_h^A \circ C_g^A$, so the frame operators are unchanged. If we wish to emphasize the lattice $\Lambda = A \Ztd$ instead of the lattice matrix $A$, we write $S_{g, h}^\Lambda$ instead of $S_{g, h}^A$.

The foremost feature that makes frames so attractive is that, given any Gabor frame $\mathcal G(g, A)$, there exists a (generally non-unique) Bessel sequence $\mathcal G(h, A)$ such that $S^A_{g, h} = I = S^A_{h, g}$, meaning that
\begin{align*}
    f = \sum_{k \in \Ztd} \langle f, \pi(Ak)g \rangle \pi(Ak)h = \sum_{k \in \Ztd} \langle f, \pi(Ak)h \rangle \pi(Ak)g \quad \text{for all } f \in L^2(\Rd),
\end{align*}
with unconditional convergence in norm. In this situation, $\mathcal G(h, A)$ is automatically also a frame, and we call it a \textit{dual frame} for $\mathcal G(g, A)$. The atom $h$ is called a \textit{dual atom} of $g$ (over $\Lambda = A \Ztd$). Although the dual atom is not unique, there is a canonical ``optimal choice'', namely $h = (S^A_{g, g})^{-1} g$, which is called the \textit{canonical dual atom} of $g$. This choice is optimal in the sense that it minimizes the $\ell^2$-norms of the coefficients $\langle f, \pi(Ak) h \rangle$ in the expansions above.

The central result in Gabor analysis is a duality theorem \cite{Koppensteiner}. In order to state it, we need to introduce the notions of adjoint lattices and Riesz basic sequences. A Gabor system $\mathcal G(g, A)$ is called a \textit{Riesz basic sequence} if there exist constants $C_1, C_2 > 0$ such that 
\begin{align*}
    C_1 \| a \|_2 \leq \Bigl\| \sum_{k \in \Ztd} a_k \pi(Ak) g \Bigr\| \leq C_2 \| a \|_2 \quad \text{for all } a = (a_k)_k \in \ell^2(\Ztd).
\end{align*}
Given a lattice $\Lambda = A \Ztd$, we define its \textit{adjoint lattice} 
\begin{align*}
    \Lambda^\circ \coloneqq \{z \in \Rtd : \pi(z)\pi(Ak) = \pi(Ak)\pi(z) \text{ for all } k \in \Ztd \}.
\end{align*}
Using the commutation relations, one finds that $\Lambda^\circ = -JA^{-T} \Ztd$. We find it convenient to introduce the notation $A^\circ \coloneqq -JA^{-T}$, so that $\Lambda^\circ = A^\circ \Ztd$. Note that
\begin{align*}
    (A^\circ)^\circ = -J(-JA^{-T})^{-T} = (-J)(-J)^{-T} A = - A,
\end{align*}
so that $(\Lambda^\circ)^\circ = \Lambda$. The size of a \textit{fundamental domain} $A[0, 1)^{2d}$ of our lattice plays a crucial role, so we denote it by $|A| \coloneqq |\det A|$ and call it the \textit{covolume} of the lattice $A\Ztd$. Note also that $|A^\circ|= |A|^{-1}$.

\begin{theorem}[The Duality Theorem]
    \label{theorem_duality}
    Let $A \in \GLtdR$ and $g \in L^2(\Rd)$. Then, $\mathcal G(g, A)$ is a Bessel sequence if and only if $\mathcal G(g, A^\circ)$ is a Bessel sequence. Moreover, the following statements are equivalent:
    \begin{itemize}
        \item[(a)] $\mathcal G(g, A)$ is a Gabor frame;
        \item[(b)] $\mathcal G(g, A^\circ)$ is a Bessel sequence and there exists a Bessel sequence $\mathcal G(h, A^\circ)$ such that
        \begin{align*}
            \langle h, \pi(A^\circ k) g \rangle = |A| \delta_{k0} \quad \text{for all } k \in \Ztd;
        \end{align*}
        \item[(c)] $\mathcal G(g, A^\circ)$ is a Riesz basic sequence.
    \end{itemize}
\end{theorem}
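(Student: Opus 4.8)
\emph{Overall plan.}
The plan is to obtain the theorem from the three classical pillars of Gabor duality, all of which flow from one identity. The pillars are: \emph{Bessel duality}, i.e.\ $\mathcal G(g, A)$ is Bessel iff $\mathcal G(g, A^\circ)$ is Bessel; the \emph{Wexler--Raz biorthogonality relations}, which yield (a)$\Leftrightarrow$(b); and the \emph{Ron--Shen duality principle}, which yields (a)$\Leftrightarrow$(c). The common source is the \emph{Fundamental Identity of Gabor Analysis} (FIGA), equivalently Janssen's representation of the (mixed-type) frame operator. So I would proceed in four steps:\ (1) prove FIGA on a dense subspace and upgrade it to an operator identity; (2) deduce Bessel duality; (3) deduce Wexler--Raz and hence (a)$\Leftrightarrow$(b); (4) deduce Ron--Shen and hence (a)$\Leftrightarrow$(c).

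\emph{Step 1 (FIGA).}
For $f_1, f_2, g_1, g_2$ in the Feichtinger algebra $M^1(\Rd)$, the function $z \mapsto \langle f_1, \pi(z) g_1 \rangle\, \overline{\langle f_2, \pi(z) g_2 \rangle}$ is a product of two short-time Fourier transforms and therefore lies in $M^1(\Rtd)$; in particular its restriction to any lattice is absolutely summable and Poisson summation is valid for it. Computing its symplectic Fourier transform and simplifying with the commutation relations \eqref{commutation_relations}, one finds that this transform is, up to the standard phase, the function $w \mapsto \langle g_2, \pi(w) g_1 \rangle\, \overline{\langle f_2, \pi(w) f_1 \rangle}$ — the windows and the analyzed vectors swap roles. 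Poisson summation over $\Lambda = A\Ztd$ turns a sum over $\Lambda$ into a sum over the annihilator of $\Lambda$ for the symplectic form, which is exactly the adjoint lattice $\Lambda^\circ = A^\circ \Ztd$ (recall $A^\circ = -J A^{-T}$), and one arrives at
\begin{align*}
    \sum_{k \in \Ztd} \langle f_1, \pi(Ak) g_1 \rangle\, \overline{\langle f_2, \pi(Ak) g_2 \rangle} = \frac{1}{|A|} \sum_{k \in \Ztd} \langle g_2, \pi(A^\circ k) g_1 \rangle\, \overline{\langle f_2, \pi(A^\circ k) f_1 \rangle},
\end{align*}
with absolute convergence on both sides. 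Taking $g_1 = g$, $g_2 = h$ and letting $f_1, f_2$ range over a dense set turns this into Janssen's representation of the mixed-type frame operator, $S_{g, h}^A = |A|^{-1} \sum_{k} \langle h, \pi(A^\circ k) g \rangle\, \pi(A^\circ k)$ whenever $\mathcal G(g, A)$ and $\mathcal G(h, A)$ are Bessel, which exhibits $S_{g,h}^A$ as an element of the von Neumann algebra generated by $\{ \pi(A^\circ k) : k \in \Ztd \}$; by the commutation relations \eqref{commutation_relations} this is precisely the commutant of $\{ \pi(Ak) : k \in \Ztd \}$.

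\emph{Steps 2--4 (the three pillars).}
Via Janssen's representation, both the frame operator $S_{g,g}^A$ on $L^2(\Rd)$ and the Gram operator $C_g^{A^\circ} D_g^{A^\circ}$ of $\mathcal G(g, A^\circ)$ on $\ell^2(\Ztd)$ are, up to the positive scalar $|A|^{-1}$, the images of the \emph{same} element of the twisted group von Neumann algebra of $\Lambda^\circ$ — the one with ``Fourier coefficients'' $\langle g, \pi(A^\circ k) g \rangle$ — under two faithful $*$-representations; hence one is bounded iff the other is (Bessel duality, Step 2), and, when bounded, they have the same spectrum up to the factor $|A|^{-1}$. For Step 3, the case $S_{g,h}^A = I$ of Janssen's representation forces $\langle h, \pi(A^\circ k) g \rangle = |A| \delta_{k0}$ for all $k$ — the Wexler--Raz relations. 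Since $\mathcal G(g, A)$ is a frame iff it is Bessel and admits a Bessel dual window $h$ with $S_{g, h}^A = I$, combining this with Step 2 gives (a)$\Leftrightarrow$(b):\ in the forward direction such an $h$ is also Bessel over $A^\circ$ and, by Wexler--Raz, biorthogonal to $g$ as in (b); in the backward direction the biorthogonal $h$ of (b) is Bessel over $A$ (Step 2), Wexler--Raz promotes the biorthogonality to $S_{g,h}^A = I$, and two Bessel systems with $S_{g,h}^A = I$ force the lower frame bound on $\mathcal G(g, A)$. For Step 4, $\mathcal G(g, A^\circ)$ is a Riesz basic sequence iff it is Bessel and its Gram operator is invertible on $\ell^2(\Ztd)$; by the spectral identification this is equivalent to $\mathcal G(g, A)$ being Bessel with invertible $S_{g,g}^A$, i.e.\ to $\mathcal G(g, A)$ being a frame, which is (a)$\Leftrightarrow$(c). (Tracking $|A|^{-1}$ and the equality of spectra also recovers the classical relations between the various bounds.)

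\emph{Main obstacle.}
The real work is not any single manipulation but the functional analysis needed because the theorem is stated for \emph{arbitrary} $g \in L^2(\Rd)$, whereas FIGA is only immediate for $M^1$ windows and ``$\mathcal G(g, A)$ Bessel'' does not imply $g \in M^1$. One must check that the representation-theoretic correspondence of Steps 2--4 survives for $L^2$ windows. The way to do this:\ whenever $S_{g,g}^A$ is bounded it commutes with every $\pi(Ak)$ (a direct computation with the cocycle identities behind \eqref{commutation_relations}), hence lies in the commutant of $\{ \pi(Ak) \}_k$, which is the von Neumann algebra $\{ \pi(A^\circ k) \}_k''$; FIGA on the dense subspace $M^1(\Rd)$ identifies its ``Fourier coefficients'' with $|A|^{-1} \langle g, \pi(A^\circ k) g \rangle$, and a weak-operator density argument then transports the equivalences of Steps 2--4 to this setting. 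Making the twisted group von Neumann algebra framework precise — its canonical trace, the faithfulness and spectrum-preservation of the left regular representation, and the identification of the two commutants via \eqref{commutation_relations} — is the technical heart; once it is in place Steps 2--4 are short. A more hands-on route keeps everything at the level of Wiener-amalgam estimates on short-time Fourier transforms together with an approximation of $g$ by $M^1$ windows adapted to the Bessel norm, but the analytic core is the same. We refer to \cite{Koppensteiner} for a complete and streamlined treatment.
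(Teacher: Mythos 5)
The paper does not prove this theorem: it is stated as background, and the proof is explicitly attributed to Gr\"ochenig and Koppensteiner \cite{Koppensteiner}, so there is no in-paper argument to compare yours against. As an outline of the standard duality argument your proposal is sound and internally consistent with the paper's Propositions \ref{prop_Wexler} and \ref{prop_Janssen}: FIGA/Janssen is the common source, Bessel duality plus the spectral identification give the first claim and $(a)\Leftrightarrow(c)$, and Wexler--Raz gives $(a)\Leftrightarrow(b)$ (with the Cauchy--Schwarz step supplying the lower frame bound in the backward direction). Two remarks. First, what you have is a proof plan rather than a proof: the technical core --- extending FIGA from $M^1$ windows to $L^2$ windows generating Bessel sequences, the commutant identification $\{\pi(Ak)\}_k' = \{\pi(A^\circ k)\}_k''$ (Rieffel's theorem on pairs of lattices), and the normality and faithfulness of the two representations of the twisted group von Neumann algebra that underwrite ``same spectrum up to $|A|^{-1}$'' --- is named and deferred rather than carried out; your ``main obstacle'' paragraph correctly locates where the work lies, but none of it is done. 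Second, your operator-algebraic packaging is the Rieffel/Daubechies--Landau--Landau route, whereas the reference the paper actually cites argues more directly with analysis and synthesis operators, amalgam-space estimates and the Wexler--Raz relations, avoiding von Neumann algebra language; both are legitimate, and yours buys the clean ``one element, two faithful representations'' picture at the price of having to establish those von Neumann algebra facts first.
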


\noindent Condition $(b)$ in the duality theorem is referred to as the \textit{Wexler-Raz biorthogonality relations}, and it turns out to characterize dual atoms. Here is a precise statement.

\begin{prop}[The Wexler-Raz Biorthogonality Relations]
    \label{prop_Wexler}
    Let $A \in \GLtdR$ and $g, h \in L^2(\Rd)$ be such that $\mathcal G(g, A)$ and $\mathcal G(h, A)$ are Bessel sequences. Then, $S_{g, h}^A = I = S_{h, g}^A$ (i.e.\ $g$ and $h$ are dual atoms over $A\Ztd$) if and only if 
    \begin{align*}
        \langle \pi(A^\circ k) h, \pi(A^\circ l) g \rangle = |A| \delta_{kl} \quad \text{for all } k, l \in \Ztd.
    \end{align*}
\end{prop}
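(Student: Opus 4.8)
The plan is to deduce both directions from the Duality Theorem combined with a translation between the frame operator $S^A_{g,h}$ on $L^2(\Rd)$ and a suitable operator built from the adjoint lattice. The cleanest route uses the following bridge, which is the standard ``Janssen representation'' heuristic made rigorous: for Bessel sequences $\mathcal G(g,A)$ and $\mathcal G(h,A)$, the mixed-type frame operator $S^A_{g,h}$ commutes with all time-frequency shifts $\pi(A^\circ k)$ (this follows directly from the commutation relations, since each $\pi(Ak)$ commutes with $\pi(A^\circ l)$ by the very definition of $A^\circ$). Thus $S^A_{g,h} - I$ also commutes with every $\pi(A^\circ k)$. The key observation is then that an operator commuting with the full family $\{\pi(A^\circ k)\}_{k}$ is determined (as far as the relevant rank-one testing goes) by its action seen through the adjoint-lattice Gabor coefficients.

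The first step is to reduce $S^A_{g,h}=I=S^A_{h,g}$ to the single equation $S^A_{g,h}=I$; indeed $S^A_{h,g}=(S^A_{g,h})^*$ when one checks adjoints, so $S^A_{g,h}=I$ automatically gives $S^A_{h,g}=I$. Next, I would invoke part (b) of the Duality Theorem: since $\mathcal G(g,A)$ being a frame is equivalent to the Wexler–Raz relation $\langle h, \pi(A^\circ k) g\rangle = |A|\delta_{k0}$, and more to the point, I want to characterize when a given Bessel pair $(g,h)$ gives $S^A_{g,h}=I$ without assuming a priori that $\mathcal G(g,A)$ is a frame. For this I would set up the operator identity: for all $f_1,f_2 \in L^2(\Rd)$,
\begin{align*}
    \langle S^A_{g,h} f_1, f_2 \rangle = \sum_{k\in\Ztd} \langle f_1, \pi(Ak)g\rangle \langle \pi(Ak)h, f_2\rangle,
\end{align*}
and compare it with the analogous bilinear form on the adjoint lattice applied to $(h,g)$ tested against $\pi(\cdot)$-shifted versions of $f_1, f_2$. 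Concretely, testing with $f_1 = \pi(A^\circ l)g$-type vectors and using that $S^A_{g,h}$ intertwines $\pi(A^\circ k)$, one extracts that $S^A_{g,h}=I$ holds if and only if $\langle S^A_{g,h}\,\pi(A^\circ l)h, \pi(A^\circ m)g\rangle = \langle \pi(A^\circ l)h, \pi(A^\circ m)g\rangle$ for all $l,m$, which after expanding and shifting indices collapses precisely to $\langle \pi(A^\circ k)h, \pi(A^\circ l)g\rangle = |A|\delta_{kl}$. The $|A|$ factor enters exactly as in the Duality Theorem, via the change of lattice from $A$ to $A^\circ$ and the covolume identity $|A^\circ| = |A|^{-1}$.

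The main obstacle is the rigorous justification of the interchange of summation and the density argument: one must ensure the double sums converge (guaranteed by the Bessel hypotheses on both $\mathcal G(g,A)$ and, via the Duality Theorem, on $\mathcal G(g,A^\circ)$ and $\mathcal G(h,A^\circ)$) and that the $\pi(A^\circ l)h$ together with the $\pi(Ak)$-shifts span a dense enough subspace to conclude $S^A_{g,h}=I$ as an operator identity rather than merely on a restricted set of test vectors. I expect this to be handled by the weak-$*$/unconditional-convergence properties of the synthesis operator already noted in the preliminaries, together with the fact that $\mathcal G(g,A^\circ)$ is a Bessel sequence (Duality Theorem), so that the relevant Janssen-type sum $\sum_k \langle h, \pi(A^\circ k)g\rangle \pi(A^\circ k)$ converges and equals $|A|\cdot S^A_{g,h}$ up to the normalization. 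Once that identity $|A|\,S^A_{g,h} = \sum_k \langle h,\pi(A^\circ k)g\rangle\,\pi(A^\circ k)$ is in hand, the equivalence is immediate: the right-hand side equals $|A|\cdot I$ precisely when all coefficients vanish except $k=0$ where it equals $|A|$, and by the commutation relations the coefficient condition $\langle h,\pi(A^\circ k)g\rangle = |A|\delta_{k0}$ is equivalent to the shifted biorthogonality $\langle \pi(A^\circ k)h, \pi(A^\circ l)g\rangle = |A|\delta_{kl}$ stated in the proposition.
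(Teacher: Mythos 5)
The paper does not actually prove this proposition; it is quoted from the literature with a citation to Gröchenig and Koppensteiner, so there is no in-paper argument to compare against. Judged on its own terms, your proposal has the right backbone, namely the Janssen representation $|A|\,S^A_{g,h}f=\sum_k \langle h,\pi(A^\circ k)g\rangle\,\pi(A^\circ k)f$, and the sufficiency direction does go through: if the biorthogonality relations hold, the sum collapses to $|A|f$ for every $f$ in the dense subspace $\Falg$ (on which the Janssen representation is applicable because $\mathcal G(f,A)$ is automatically a Bessel sequence by Lemma \ref{lemma_properties_of_Feichtinger}), and boundedness extends this to $S^A_{g,h}=I$. The reduction $S^A_{h,g}=(S^A_{g,h})^*$ and the phase bookkeeping relating $\langle h,\pi(A^\circ k)g\rangle=|A|\delta_{k0}$ to the doubly indexed form $\langle\pi(A^\circ k)h,\pi(A^\circ l)g\rangle=|A|\delta_{kl}$ are also correct.

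The gap is in the necessity direction. You assert that the right-hand side of the Janssen representation equals $|A|\cdot I$ ``precisely when'' all coefficients vanish except at $k=0$, but the ``only when'' half of this is exactly the nontrivial content of Wexler--Raz: it is the injectivity of the map sending an $\ell^2$ sequence $(c_k)_k$ to the operator $\sum_k c_k\pi(A^\circ k)$, and it follows from nothing stated in the paper. You flag convergence and density as the main obstacles, but those are the easy parts; the coefficient-uniqueness step is where the work lies. A standard way to close it: for $T=\sum_k c_k\pi(A^\circ k)$ the function $z\mapsto\langle T\rho(z)g_0,\rho(z)g_0\rangle$ is, by the commutation relations and $(A^\circ)^\circ\Ztd=A\Ztd$, an absolutely convergent Fourier series on $\Rtd/A\Ztd$ with coefficients $c_k\langle\pi(A^\circ k)g_0,g_0\rangle$; since these inner products are nonvanishing Gaussians in $A^\circ k$ and the Fourier coefficients of the constant function $|A|\langle g_0,g_0\rangle$ are supported at $k=0$, uniqueness of Fourier coefficients gives $c_k=|A|\delta_{k0}$. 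Without some such argument the forward implication is unsupported. Separately, the ``testing'' computation in your middle paragraph is vacuous: once $S^A_{g,h}=I$ is assumed, the identity $\langle S^A_{g,h}\pi(A^\circ l)h,\pi(A^\circ m)g\rangle=\langle\pi(A^\circ l)h,\pi(A^\circ m)g\rangle$ holds tautologically for all $l,m$ and yields no information, so that step should be removed; all the content must come from the Janssen expansion together with the injectivity argument above.
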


\noindent As a consequence of duality, one obtains the following necessary condition for lattices to support Gabor frames.

\begin{theorem}[The Density Theorem]
    \label{theorem_density}
    Let $A \in \GLtdR$. If there exists any $g \in L^2(\Rd)$ such that $\mathcal G(g, A)$ is a frame, then $0 < |A| \leq 1$.
\end{theorem}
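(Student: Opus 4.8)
The plan is to derive the Density Theorem from the Duality Theorem, specifically from the equivalence of parts $(a)$ and $(c)$. Suppose $\mathcal G(g, A)$ is a frame. Then, by Theorem \ref{theorem_duality}, $\mathcal G(g, A^\circ)$ is a Riesz basic sequence. The key point is that a Riesz basic sequence consists of vectors that are ``not too densely packed'', which should translate into a lower bound on the covolume of $A^\circ$, hence an upper bound on the covolume of $A$.

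Concretely, here is the argument I would carry out. Since $\mathcal G(g, A^\circ)$ is a Riesz basic sequence, it is in particular a Bessel sequence, so $\mathcal G(g, A)$ is also a Bessel sequence by the first part of Theorem \ref{theorem_duality}; since $\mathcal G(g, A)$ is moreover a frame, the mixed-type frame operators are defined and we can speak of dual atoms. Being a frame, $\mathcal G(g, A)$ has a dual atom $h$, so that $S_{g,h}^A = I = S_{h,g}^A$. By Proposition \ref{prop_Wexler} (the Wexler-Raz relations), this is equivalent to
\begin{align*}
    \langle \pi(A^\circ k) h, \pi(A^\circ l) g \rangle = |A| \, \delta_{kl} \quad \text{for all } k, l \in \Ztd.
\end{align*}
In particular, taking $k = l = 0$ gives $\langle h, g \rangle = |A|$, and by Cauchy-Schwarz, $|A| \le \|g\| \, \|h\|$. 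This already shows $|A| > 0$ (which is automatic since $A$ is invertible) but we need the upper bound, so I would instead extract it as follows: expand $\|h\|^2$ using the fact that $\mathcal G(g, A^\circ)$ is a frame (it is a Riesz basic sequence, hence a frame for its closed span, and one checks $h$ lies in that span because $h = (S^A_{g,g})^{-1} g$ is a limit of finite linear combinations of the $\pi(A^\circ k) g$... ) — actually the cleanest route is: since $\mathcal G(g, A^\circ)$ is a Riesz basic sequence with lower bound $C_1$, the Gram matrix $G_{kl} = \langle \pi(A^\circ k) g, \pi(A^\circ l) g\rangle$ satisfies $G \ge C_1 I$ as an operator on $\ell^2(\Ztd)$, so in particular its diagonal entries $\|g\|^2 \ge C_1$, and the biorthogonal system $\{\pi(A^\circ k) h\}$ is (up to the factor $|A|$) the image of the standard basis under $G^{-1}$ composed with the synthesis map, giving $\sum_k |\langle f, \pi(A^\circ k) h\rangle|^2 \le |A|^2 C_1^{-1} \|f\|^2$.

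The slicker and more standard approach, which I would actually present, avoids the operator-theoretic bookkeeping: apply the Bessel bound for $\mathcal G(h, A^\circ)$ (which is a Bessel sequence — indeed a frame for the span — by duality applied to the frame $\mathcal G(h, A)$, or directly since dual frames are frames) to the vector $f = g$. On one hand, $\sum_{k} |\langle g, \pi(A^\circ k) h\rangle|^2 \le B_h \|g\|^2$ for the optimal Bessel bound $B_h$ of $\mathcal G(h, A^\circ)$. On the other hand, the Wexler-Raz relations say exactly one term survives: $|\langle g, \pi(A^\circ \cdot 0) h \rangle|^2 = |A|^2$. Hence $|A|^2 \le B_h \|g\|^2$. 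Symmetrically, using the Bessel bound $B_g$ of $\mathcal G(g, A^\circ)$ against $f = h$, we get $|A|^2 \le B_g \|h\|^2$. Now I still need to convert these into $|A| \le 1$; for that I would use the fact (provable from the Bessel property and a counting/density argument, or cited as standard) that the optimal Bessel bound of $\mathcal G(g, A^\circ)$ is at most $\|g\|^2 / |A^\circ| = |A| \, \|g\|^2$ — this is the ``every Bessel sequence of this form wastes no mass faster than the covolume allows'' estimate. Combining $|A|^2 \le B_g \|h\|^2 \le |A| \, \|g\|^2 \|h\|^2$... hmm, this still carries the norms.

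The honest main obstacle, then, is the estimate $B_g \le |A| \, \|g\|^2$ on the optimal Bessel bound, equivalently that a Bessel sequence $\mathcal G(g, \Lambda^\circ)$ always satisfies $\sum_k |\langle f, \pi(\mu_k) g\rangle|^2 \le |\Lambda^\circ|^{-1} \|g\|^2 \|f\|^2$; together with the companion fact that a frame $\mathcal G(g,\Lambda)$ satisfies $A_{\mathrm{frame}} \ge |\Lambda|^{-1} \|g\|^2$ is false in general — rather, the right pairing is: frame lower bound of $\mathcal G(g,\Lambda)$ times $|\Lambda|$ is $\le \|g\|^2$, no. I expect the cleanest complete proof to run: $\mathcal G(g, A^\circ)$ a Riesz basic sequence $\Rightarrow$ $\mathcal G(g, A^\circ)$ a frame for its span with the \emph{same} bounds $\Rightarrow$ by Wexler-Raz, $|A|^{-1} h$ projected suitably realizes the canonical dual, and evaluating the frame inequality $A_1 \|g\|^2 \le \sum_k |\langle g, \pi(A^\circ k) g\rangle|^2$ at ... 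I would ultimately reduce everything to the inequality: for a Bessel sequence $\mathcal G(\varphi,\Lambda^\circ)$ with Bessel bound $B$, one has $B \ge |\Lambda^\circ|^{-1}\|\varphi\|^2$ is again the wrong direction. Given this genuine subtlety, in the write-up I would either (i) cite the Density Theorem's standard proof via Wexler-Raz plus the Bessel-bound/covolume inequality as in Gröchenig's book and Janssen's work, or (ii) present the Wexler-Raz reduction in full and then invoke the single clean lemma ``a Bessel system $\{\pi(A^\circ k)g\}_k$ has Bessel bound $\le \|g\|^2/|A|$'' with its short proof (integrate $\sum_k|\langle \pi(A^\circ k)^{-1} f, g\rangle|^2$ against the Haar measure on a fundamental domain and use the orthogonality of time-frequency shifts / the fact that $\{\pi(A^\circ k)\}$ tiles). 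That lemma is precisely where the density of the lattice enters, and it is the step I expect to require the most care.
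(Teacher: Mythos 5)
The paper does not actually prove Theorem \ref{theorem_density}; it quotes it as a known consequence of duality and refers to \cite{Koppensteiner} for the proof, so your proposal is being judged on its own. Your overall plan (reduce to Wexler--Raz via duality) is the right one, and the reduction to $\langle h, g\rangle = |A|$ is correct, but no branch of your argument actually reaches $|A| \le 1$: every route either still carries the norms $\|g\|$, $\|h\|$, or leans on the ``single clean lemma'' that the optimal Bessel bound of $\mathcal G(g, A^\circ)$ is at most $\|g\|^2/|A^\circ|$ (elsewhere you write $\|g\|^2/|A|$). That lemma is false --- the inequality runs the other way. The averaging argument you allude to (integrate $\sum_k |\langle \pi(z)g, \pi(A^\circ k)g\rangle|^2$ over a fundamental domain and apply Moyal's identity) shows that the \emph{average} of this sum is $\|g\|^4/|A^\circ|$, hence that the optimal Bessel bound is at \emph{least} $\|g\|^2/|A^\circ|$ and the optimal lower frame bound is at \emph{most} $\|g\|^2/|A^\circ|$. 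As an upper bound on the Bessel constant it already fails for the single term $k=0$: $\sup_{\|f\|=1}|\langle f, g\rangle|^2 = \|g\|^2$, which exceeds $\|g\|^2/|A^\circ|$ whenever $|A^\circ| > 1$. So option (ii) of your fallback collapses, and option (i) is a citation, not a proof.

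The missing step is to run a frame expansion over the \emph{primal} lattice with the \emph{canonical} dual atom and then feed in Wexler--Raz on the adjoint. Take $h = (S^A_{g,g})^{-1}g$, so that $g = S^A_{g,g}h = \sum_{k}\langle h, \pi(Ak)g\rangle\,\pi(Ak)g$. Pairing with $h$ and using unconditional convergence gives
\begin{align*}
\langle g, h\rangle \;=\; \sum_{k\in\Ztd} \blangle h, \pi(Ak)g\brangle\,\overline{\blangle h, \pi(Ak)g\brangle} \;=\; \sum_{k\in\Ztd} \bl|\blangle h, \pi(Ak)g\brangle\br|^2 \;\ge\; \bl|\langle h, g\rangle\br|^2,
\end{align*}
keeping only the $k=0$ term. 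Proposition \ref{prop_Wexler} with $k=l=0$ gives $\langle h, g\rangle = |A|$, which is real and positive, so $\langle g, h\rangle = |A|$ as well, and the display becomes $|A| \ge |A|^2$, i.e.\ $|A| \le 1$ (while $|A|>0$ is automatic from invertibility of $A$). The canonical dual is essential here: with a generic dual atom the pairing produces a cross term $\sum_k \langle g, \pi(Ak)h\rangle\langle\pi(Ak)g, h\rangle$ rather than a sum of squares. Your very first observation ($\langle h,g\rangle = |A| \le \|g\|\,\|h\|$) was the right opening move; this is how to close it without any auxiliary Bessel-bound estimate.
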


\noindent Proofs of the duality theorem, the Wexler-Raz biorthogonality relations and the density theorem can all be found in Gröchenig and Koppensteiner \cite{Koppensteiner}.

The density theorem shows that covolume plays a crucial role in the structure of Gabor systems. In Section \ref{sec_geometric_content}, we will see that the covolume of $A\Ztd$ has a surprisingly simple relation to the entries of $A^T J A$.

\subsection{The Symplectic Group and the $\rho$-Related Unitary Operators}
\label{prelim_symplectic}

Consider the bilinear form $\sigma \from \Rtd \times \Rtd \to \mathbb R$ represented by $J$ (as defined by Equation \eqref{std_symp_matrix}), namely
\begin{align}
    \label{std_symp_form}
    \sigma(z, w) = w^T J z = y \cdot \omega - x \cdot \eta \quad \text{with } z = (x, \omega), w = (y, \eta) \in \Rtd.
\end{align}
This is the \textit{standard symplectic form} on $\Rtd$. We have already seen it appear in the commutation relations \eqref{std_commutation_relation_with_J}.

The \textit{symplectic group} $\SptdR$ consists of all matrices that preserve the standard symplectic form. In other words, for any $2d \times 2d$ matrix $S$, we have
\begin{align*}
    S \in \SptdR &\iff \sigma(Sz, Sw) = \sigma(z, w) \text{ for all } z, w \in \Rtd \\ &\iff S^TJS = J.
\end{align*}
It turns out that $\SptdR$ is a subgroup of $\SLtdR$ \cite[Section~2.3.3]{deGosson_book}. Note that $J$ itself is a symplectic matrix, so it plays two distinct roles in this subject. (This is analogous to the fact that the identity matrix $I$ both represents the standard inner product on $\Rd$ and is an orthogonal matrix.)

The standard symplectic form $\sigma$ has the following properties:
\begin{itemize}
    \item[(i)] $\sigma(z, w) = - \sigma(w, z)$ for all $z, w \in \Rtd$ (\textit{anti-symmetry});
    \item[(ii)] if $\sigma(z, w) = 0$ for all $w \in \Rtd$, then $z = 0$ (\textit{non-degeneracy}).
\end{itemize}
Any bilinear form on $\Rtd$ with these two properties is called a \textit{symplectic form}. It is not difficult to see that they are precisely the bilinear forms represented by antisymmetric and invertible matrices. That is, for $\theta \in \MtdR$, the bilinear form $\Omega \from \Rtd \times \Rtd \to \mathbb R$ defined by
\begin{align*}
    \Omega(z, w) = w^T \theta z \quad \text{for } z, w \in \Rtd
\end{align*}
is symplectic if and only if $\theta$ is invertible and antisymmetric ($\theta^T = - \theta$). In this situation, we say that $\Omega$ is \textit{represented} by $\theta$. We will blur the distinction between a symplectic form and the matrix that represents it; we will mainly work with matrices and refer to antisymmetric and invertible matrices $\theta$ as symplectic forms.

Using a variant of the Gram-Schmidt process, one can prove that any symplectic form is related to the standard one by a change of basis in $\Rtd$ \cite[Section~1.1]{SilvaAna}. That is, given any symplectic form $\Omega$, we can find $A \in \GLtdR$ such that
\begin{align}
    \label{eq_symp_form_determined_by_A}
    \Omega(z, w) = \sigma(Az, Aw) \quad \text{for all } z, w \in \Rtd.
\end{align}
If $\theta$ represents $\Omega$, this is equivalent to $A^T J A = \theta$. In this context, we say that $A$ \textit{determines} the symplectic form $A^T J A$. As with lattice matrices, these are not unique, but we can give a precise description of the redundancy. This simple observation is crucial to our classification scheme for lattices.  

\begin{lemma}
    \label{lemma_symplectic_conjugation}
    For $A, B \in \GLtdR$, the following conditions are equivalent.
    \begin{itemize}
        \item[(a)] $A$ and $B$ determine the same symplectic form:\ $A^T J A = B^T J B$.
        \item[(b)] $BA^{-1} \in \SptdR$, i.e.\ $B = SA$ for some $S \in \SptdR$.
    \end{itemize}
    In particular, if $\theta$ is a symplectic form and $A$ is any matrix that determines $\theta$, then the set of all matrices that determine $\theta$ is given by $\SptdR A$.
    \end{lemma}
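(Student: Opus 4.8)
The plan is to establish the equivalence by a short direct computation, using only the defining identity $S^TJS = J$ for symplectic matrices together with the invertibility of $A$; the argument is the symplectic analogue of the one behind Remark~\ref{remark_redundancy_of_lattice_matrices}.

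For the implication $(b) \Rightarrow (a)$, I would write $S \defeq BA^{-1} \in \SptdR$, so that $B = SA$, and then compute
\begin{align*}
    B^T J B = (SA)^T J (SA) = A^T (S^T J S) A = A^T J A.
\end{align*}
For $(a) \Rightarrow (b)$, I would again set $S \defeq BA^{-1}$ — which lies in $\GLtdR$ because $A$ and $B$ do — so that $B = SA$, and substitute into the hypothesis $A^T J A = B^T J B$ to get $A^T J A = A^T (S^T J S) A$. Since $A$ is invertible, multiplying by $A^{-T}$ on the left and by $A^{-1}$ on the right yields $S^T J S = J$, i.e.\ $S \in \SptdR$; hence $B = SA$ with $S$ symplectic, which is precisely condition $(b)$.

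The ``in particular'' statement then follows formally. If $A$ determines $\theta$, meaning $A^T J A = \theta$, then for $B \in \GLtdR$ the chain $B^T J B = \theta \iff B^T J B = A^T J A \iff BA^{-1} \in \SptdR \iff B \in \SptdR A$ shows that the set of all matrices determining $\theta$ equals $\SptdR A$.

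I do not expect a genuine obstacle here; the only point worth flagging is that the cancellation step in $(a) \Rightarrow (b)$ relies on the standing hypothesis $A \in \GLtdR$ — without invertibility of $A$ the two conditions would not coincide.
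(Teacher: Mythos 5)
Your proof is correct and is essentially the paper's own argument: the paper simply observes that $A^T J A = B^T J B$ if and only if $J = (BA^{-1})^T J (BA^{-1})$, which is exactly your cancellation step spelled out in both directions. Nothing to add.
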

\begin{proof} 
    We have $A^T JA = B^T JB$ if and only if $J = (BA^{-1})^T J (BA^{-1})$.
\end{proof}

\noindent Note that the roles of $A$ and $B$ are symmetric, so that $A^T JA = B^T J B$ is equivalent to $AB^{-1} \in \SptdR$ as well. This just amounts to the trivial observation that if $B = SA$ with $S \in \SptdR$, then $A = S^{-1} B$ and $S^{-1} \in \SptdR$. 

It turns out that for every $S \in \SptdR$, we can find a unitary operator $U$ on $L^2(\mathbb R^d)$ with the property that 
\begin{align}
    \label{metaplectic_op}
    U \rho(z)U^* = \rho(Sz) \quad \text{for all } z \in \Rtd.
\end{align}
This identity does not determine $U$ uniquely:\ if $U$ satisfies  \eqref{metaplectic_op}, then so does $\alpha U$ for any phase factor $\alpha$ (i.e.\ $\alpha \in \mathbb C$ with $|\alpha| = 1$).

\begin{prop}
    \label{prop_metaplectic_ops}
    For every $S \in \SptdR$, there exists a unitary operator $U$ on $L^2(\Rd)$ such that \eqref{metaplectic_op} is satisfied. Moreover, any bounded and invertible operator $\F$ on $L^2(\Rd)$ satisfying 
    \begin{align}
        \label{intertwining_op}
        \F \rho(z) \F^{-1} = \rho(Sz) \quad \text{for all } z \in \Rtd
    \end{align}
    is of the form $\F = \alpha U$ with $\alpha \in \mathbb C$. If $F$ is unitary, then $|\alpha| = 1$.
\end{prop}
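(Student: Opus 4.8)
The plan is to get existence from the Stone--von Neumann uniqueness theorem and to get the rigidity (``moreover'') part from irreducibility of the Schrödinger representation together with Schur's lemma. These are the only two nontrivial inputs; everything else is bookkeeping with the commutation relations.

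For existence, I would first record the cocycle identity $\rho(z)\rho(w) = e^{\pi i \sigma(z,w)} \rho(z+w)$ for $z,w \in \Rtd$, which is the computation behind \eqref{commutation_relations} applied to the symmetrized shifts (using $\rho(z) = e^{-\pi i x\cdot\omega}\pi(z)$ and $T_xM_\omega = e^{-2\pi i x\cdot\omega}M_\omega T_x$). Because $S \in \SptdR$ preserves $\sigma$, the family $z \mapsto \rho(Sz)$ satisfies the \emph{same} identity: $\rho(Sz)\rho(Sw) = e^{\pi i\sigma(Sz,Sw)}\rho(S(z+w)) = e^{\pi i\sigma(z,w)}\rho(S(z+w))$. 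Both $z \mapsto \rho(z)$ and $z\mapsto\rho(Sz)$ are strongly continuous (the latter since $S$ is continuous), and both are irreducible: $\{\rho(z):z\in\Rtd\}$ acts irreducibly on $L^2(\Rd)$ by the standard injectivity properties of the short-time Fourier transform, and $\{\rho(Sz):z\in\Rtd\} = \{\rho(w):w\in\Rtd\}$ since $S$ is a bijection of $\Rtd$. Passing to the associated genuine unitary representations of the Heisenberg group $\Rtd\times\mathbb R$ — both with the same nontrivial central character $t\mapsto e^{2\pi i t}$ — the Stone--von Neumann theorem yields a unitary $U$ on $L^2(\Rd)$ intertwining them, which is exactly \eqref{metaplectic_op}.

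For the rigidity statement, suppose $F$ is bounded and invertible and satisfies \eqref{intertwining_op}. Comparing with \eqref{metaplectic_op} gives $F\rho(z)F^{-1} = U\rho(z)U^*$ for all $z \in \Rtd$, so $T \defeq U^{-1}F$ commutes with every $\rho(z)$. Since $\{\rho(z):z\in\Rtd\}$ is irreducible, Schur's lemma forces $T = \alpha I$ for some $\alpha\in\mathbb C$; invertibility of $F$ gives $\alpha\neq 0$, hence $F = \alpha U$. If in addition $F$ is unitary, then $\|v\| = \|Fv\| = |\alpha|\,\|Uv\| = |\alpha|\,\|v\|$ for every $v\in L^2(\Rd)$, so $|\alpha| = 1$.

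The only real obstacle is invoking Stone--von Neumann correctly, i.e.\ checking its hypotheses (strong continuity, irreducibility of both representations, matching central characters); there is no genuine analytic difficulty. If one prefers to avoid the abstract theorem, the existence of $U$ can instead be produced by hand: write $S$ as a product of elements of a standard generating set of $\SptdR$ (dilations, the matrix $J$ itself, and the symplectic shears associated to symmetric matrices), exhibit an explicit unitary satisfying \eqref{metaplectic_op} for each generator, and compose. This is the route taken in Appendix \ref{appendix}, and it has the advantage of producing the concrete integral formulas used in the examples.
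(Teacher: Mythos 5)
Your proposal is correct, but the existence half takes a genuinely different route from the paper. The paper proves existence constructively: in Appendix \ref{appendix} it decomposes an arbitrary $S \in \SptdR$ into a product of two free symplectic matrices (Proposition \ref{prop_two_free_symplectic}), factors each free matrix through the generators $J$, $V_P$, $M_L$ (Lemma \ref{appendix_lemma_generators}), and verifies \eqref{metaplectic_op} directly for the explicit operators $U_J$, $U_{V_P}$, $U_{M_L}$ --- this is exactly the ``by hand'' alternative you sketch in your last paragraph, and it is what produces the quadratic Fourier transform formulas \eqref{eq_quadratic_Foruier} used in the examples. You instead invoke the Stone--von Neumann theorem, after correctly checking the cocycle identity $\rho(z)\rho(w) = e^{\pi i \sigma(z,w)}\rho(z+w)$, the invariance of the cocycle under $S$, strong continuity, irreducibility, and the matching central characters; this is shorter and cleaner as an existence proof but yields no explicit formula for $U$. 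For the rigidity statement your argument coincides with the paper's in substance: the paper cites Folland for the fact that an intertwiner of irreducible unitary representations of the Heisenberg group is unique up to a scalar, while you spell out the underlying Schur's lemma step ($U^*\F$ lies in the commutant of the irreducible family $\{\rho(z)\}$, hence is a scalar; correctly using the version of Schur's lemma valid for arbitrary bounded operators in the commutant, since $U^*\F$ need not be unitary). Both halves of your argument are sound.
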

\begin{proof}
    For the proof of existence, see Appendix \ref{appendix}. The fact that $\F$ is determined uniquely up to scalar multiples follows from the fact that \eqref{intertwining_op} exhibits $\F$ as an intertwining operator between two irreducible unitary representations of the Heisenberg group. See Folland \cite[Sections~3.1~and~6.7]{Folland} for details. (This claim of uniqueness will not matter for any of our results.)
\end{proof}

The following definition is crucial; it allow us to circumvent the subtle construction of the metaplectic group (to be discussed shortly). 

\begin{define}[$\rho$-Relatedness]
    \label{rho_related}
    Given a symplectic matrix $S$, we will say that a unitary operator $U$ on $L^2(\Rd)$ is \textit{\related}to $S$ if \eqref{metaplectic_op} is satisfied, i.e.\ if $U \rho(z) U^* = \rho(Sz)$ for all $z \in \Rtd$. 
\end{define}

\noindent Note that if two unitary operators $U_1$ and $U_2$ are \related to $S_1$ and $S_2$ (respectively), then $U_1 U_2$ is \related to $S_1 S_2$, since
\begin{align*}
    (U_1 U_2) \rho(z) (U_1 U_2)^* = U_1 \rho(S_2 z) U_1^* = \rho(S_1 S_2 z).
\end{align*}

At this point, it is common to introduce the \textit{metaplectic group} $\MptdR$. It turns out that for each $S \in \SptdR$, we can choose precisely two unitary operators that are \related to $S$ in such a manner that the resulting collection of operators—denoted by $\MptdR$—forms a group under composition, and we obtain a two-to-one group homomorphism $\MptdR \to \SptdR$. Since all the unitary operators that are \related to $S$ differ only by phase factors (by Proposition \ref{prop_metaplectic_ops}), the metaplectic group arises from careful choices of phase factors. Keeping track of the phase factors needed to obtain the metaplectic group will serve us no purpose and only complicate our exposition, so we see no reason to do so. For those already familiar with the metaplectic group, we remark that ignoring the phase factors amounts to working with the \textit{circle extension} of the metaplectic group, namely 
\begin{align*}
    \MpctdR \coloneqq \{ \alpha U \mid \alpha \in \mathbb C, |\alpha| = 1 \text{ and } U \in \MptdR \}.
\end{align*}
Whenever we speak of a unitary operator $U$ that is \related to $S \in \SptdR$, one is of course free to take $U \in \MptdR$, but $U$ can be any unitary operator satisfying \eqref{metaplectic_op} (and by the uniqueness result of Proposition \ref{prop_metaplectic_ops}, any such operator differs from a metaplectic one only by a phase factor, i.e.\ $U \in \MpctdR$).

In Appendix \ref{appendix}, we explicitly construct the \related unitary operators for a large class of symplectic matrices. In particular, we identify a simple set of generators for the symplectic group and find their \related unitary operators. We emphasize that one can do all of this without ever worrying about choices of phase factors—straightforward calculations and linear algebra suffices. We will use these generators and operators in some of our calculations, so we reproduce the result here. 

Let $P = P^T \in \MdR$ and $L \in \GLdR$. Define the $2d \times 2d$ matrices
\begin{align*}
    V_P \coloneqq \begin{pmatrix}
        I & 0 \\ -P & I 
    \end{pmatrix} \quad \text{and} \quad M_L \coloneqq \begin{pmatrix}
        L^{-1} & 0 \\ 0 & L^T
    \end{pmatrix}.
\end{align*}
A straightforward calculation shows that these are symplectic. We also define unitary operators $U_{V_P}$ and $U_{M_L}$ on $L^2(\Rd)$ by
\begin{align}
    \label{def_of_U_V_and_U_M}
    U_{V_P} f(t) = e^{-\pi i (Pt)\cdot t} f(t) \quad \text{and} \quad U_{M_L} f(t) = \sqrt{|\det L|} f(Lt)
\end{align}
for all $f \in L^2(\Rd)$ and $t \in \Rd$. Moreover, with an eye to the standard symplectic matrix $J$, we denote the Fourier transform $\mathcal F$ on $L^2(\Rd)$ by $U_J$, i.e.\
\begin{align*}
    U_J f(t) = \mathcal Ff(t)  = \int_{\Rd} f(x) e^{-2 \pi i x \cdot t} \, dx.
\end{align*}

\begin{prop}[Generators of the Symplectic Group]
    \label{prop_generators}
    The collection 
    \begin{align*}
        \{ J \} \cup \{ V_P : P = P^T \in \MdR \} \cup \{ M_L : L \in \GLdR \}
    \end{align*}
    generates the symplectic group $\SptdR$, and $U_J, U_{V_P}$ and $U_{M_L}$ are unitary operators \related to these generators, i.e.\
    \begin{align*}
        U_J\rho(z) = \rho(Jz) U_J, \quad U_{V_P} \rho(z) = \rho(V_P z) U_{V_P} \quad \text{and} \quad U_{M_L} \rho(z) = \rho(M_L z) U_{M_L}
    \end{align*}
    for all $z \in \Rtd$. 
\end{prop}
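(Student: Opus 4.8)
The plan is to prove the two assertions separately: first that the listed matrices generate $\SptdR$, and then that each of the three operators $U_J$, $U_{V_P}$, $U_{M_L}$ is $\rho$-related to the corresponding symplectic matrix. The generation statement is a standard fact in symplectic linear algebra (a block-matrix version of Gaussian elimination), so I would either cite it from de Gosson \cite{deGosson_book} or sketch it as follows. Given $S = \begin{pmatrix} A & B \\ C & D \end{pmatrix} \in \SptdR$ written in $d \times d$ blocks, one uses the symplectic relations ($A^TC = C^TA$, $B^TD = D^TB$, $A^TD - C^TB = I$) to reduce $S$ to the identity by left- and right-multiplication by the proposed generators. Concretely: if $B$ is invertible one can write $S = V_{P_1} M_L J V_{P_2}$ for suitable $P_1 = P_1^T$, $P_2 = P_2^T$, $L$ invertible (this is essentially the ``$J$-decomposition''); the case $\det B = 0$ is handled by first multiplying by a $V_P$ to make the upper-right block invertible, which is possible because the columns of $\begin{pmatrix} B \\ D \end{pmatrix}$ are linearly independent. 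This is routine block linear algebra and I would not grind through it.

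For the $\rho$-relatedness claims, the strategy is direct computation: for each generator $S \in \{J, V_P, M_L\}$ and each $z = (x,\omega) \in \Rtd$, evaluate both $U_S \rho(z) f$ and $\rho(Sz) U_S f$ at an arbitrary point $t \in \Rd$ for $f \in L^2(\Rd)$, and check they agree. It is cleanest to factor $\rho(z) = e^{-\pi i x \cdot \omega} M_\omega T_x$ and first record how $U_S$ conjugates the elementary operators $T_x$ and $M_\omega$. For $U_{M_L}$: a change of variables gives $U_{M_L} T_x U_{M_L}^* = T_{L^{-1}x}$ and $U_{M_L} M_\omega U_{M_L}^* = M_{L^T \omega}$, so $U_{M_L}$ conjugates $\rho(x,\omega)$ to (a phase times) $M_{L^T\omega} T_{L^{-1}x}$; one then checks the phase factor $e^{-\pi i x\cdot\omega}$ matches $e^{-\pi i (L^{-1}x)\cdot(L^T\omega)} = e^{-\pi i x\cdot\omega}$, and since $M_L(x,\omega) = (L^{-1}x, L^T\omega)$ this is exactly $\rho(M_L z)$. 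For $U_{V_P}$: multiplication by the chirp $e^{-\pi i (Pt)\cdot t}$ commutes with modulations and turns a translation $T_x$ into a translation times a modulation (because $P(t-x)\cdot(t-x) - Pt\cdot t = -2Pt\cdot x + Px\cdot x$, using $P = P^T$), producing $\rho(V_P z)$ after the phase bookkeeping — this is where the symmetry $P = P^T$ is essential. For $U_J = \mathcal F$: the classical intertwining relations $\mathcal F T_x = M_{-x}\mathcal F$ and $\mathcal F M_\omega = T_\omega \mathcal F$ (up to the sign conventions fixed by the $e^{-2\pi i x\cdot t}$ kernel) give $\mathcal F \rho(x,\omega) = \rho(\omega, -x)\mathcal F$ after checking the phase, and $J(x,\omega) = (\omega, -x)$, so this matches.

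The main obstacle is purely bookkeeping: getting every phase factor right, since $\rho$ differs from $\pi$ by the factor $e^{-\pi i x\cdot\omega}$ and the commutation relation $T_xM_\omega = e^{-2\pi i x\cdot\omega}M_\omega T_x$ keeps generating extra exponentials. Using the symmetrized shifts $\rho$ rather than $\pi$ is exactly what makes these phases cancel cleanly (indeed $\rho$ is designed so that $U_S\rho(z)U_S^* = \rho(Sz)$ holds on the nose, with no leftover phase), so I would organize the computation around the symmetric operators throughout rather than converting back and forth. A secondary point worth a sentence is that each $U_S$ as defined in \eqref{def_of_U_V_and_U_M} is genuinely unitary — for $U_{M_L}$ this is the Jacobian factor $\sqrt{|\det L|}$, for $U_{V_P}$ it is multiplication by a unimodular function, and for $U_J$ it is Plancherel's theorem — but these are immediate. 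No appeal to Proposition \ref{prop_metaplectic_ops} or the metaplectic group is needed; the content here is that one can exhibit the relevant operators by hand, which is the point of stating the proposition.
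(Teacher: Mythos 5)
Your route is essentially the paper's. The proof lives in Appendix A: the paper factors every \emph{free} symplectic matrix (one with $\det B \neq 0$) as $S = V_{-DB^{-1}}\,M_{B^{-1}}\,J\,V_{-B^{-1}A}$ — exactly the $V_{P_1}M_LJV_{P_2}$ decomposition you describe, with the symmetry of $DB^{-1}$ and $B^{-1}A$ extracted from the symplectic relations — then cites de Gosson for the fact that every symplectic matrix is a product of two free ones, and finally verifies the three intertwining identities by precisely the direct computation you outline (conjugate $T_x$ and $M_\omega$ separately, then check the phases; the paper calls this ``straightforward but tedious'' and outsources the details). Your phase bookkeeping is correct, including the roles of $P=P^T$ in the chirp computation and of the symmetrized shifts in making the phases cancel on the nose.

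One step in your sketch of the degenerate case does not work as stated: you cannot ``make the upper-right block invertible by multiplying by a $V_P$.'' Since $V_Q = \left(\begin{smallmatrix} I & 0 \\ -Q & I \end{smallmatrix}\right)$ is block lower triangular, both $V_Q S$ and $S V_Q$ have the \emph{same} upper-right block $B$ as $S$, for every symmetric $Q$; no such multiplication can repair $\det B = 0$. To move that block you need a factor of $J$ as well — for instance $S V_{-Q} J$ has upper-right block $A + BQ$, and $S = (S V_{-Q} J)(J^{-1}V_Q)$ with $J^{-1}V_Q$ free — but the existence of a symmetric $Q$ making $A+BQ$ invertible is exactly the nontrivial content of the ``every symplectic matrix is a product of two free ones'' theorem. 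Since your primary plan (like the paper's) is to cite de Gosson for that statement, nothing essential is missing, but the sketch as written should not be presented as a proof of the reduction.
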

\begin{proof}
    See Appendix \ref{appendix}.
\end{proof}

Finally, we note that \eqref{metaplectic_op} does \textit{not} hold if we replace the symmetrized time-frequency shifts $\rho(z)$ with ordinary time-frequency shifts $\pi(z)$. As mentioned in Section \ref{prelim_TF_Gabor}, the choice between $\rho(z)$ and $\pi(z)$ does have an impact on the analysis and synthesis operators associated to a Bessel sequence, so there are situations in Gabor analysis where using symmetrized time-frequency shifts is advantageous (for instance in the setting of modules over noncommutative tori). As long as one is concerned only with frame operators, as we will be for most of this article, the distinction rarely matters. 

\subsection{The Feichtinger Algebra}
\label{subsec_Falg}

The Feichtinger algebra $\Falg$ is a Banach space introduced by Feichtinger in 1981 \cite{Feichtinger_algebra}. It has many properties that makes it an ideal source of atoms for Gabor systems—see Jakobsen \cite{Jakobsen} for a thorough survey. The definition requires a bit of setup. 

For any $f, g \in L^2(\Rd)$, we define \textit{the short-time Fourier transform} of $f$ with \textit{window} $g$ to be the function $V_g f \from \Rtd \to \mathbb C$ defined by
\begin{align*}
    V_gf(z) = \langle f, \pi(z) g \rangle.
\end{align*}
If $g$ is $L^2$-normalized, the operator mapping a function $f \in L^2(\Rd)$ to the function $V_g f$ turns out to be an isometry from $L^2(\Rd)$ into $L^2(\Rtd)$. This really is the motivation behind the theory of Gabor frames, which can be seen as a discretized version of this construction (where we replace $\Rtd$ with a lattice $\Lambda$). There is one particularly nice window that we will make use of from time to time, namely the standard ($L^2$-normalized) Gaussian $g_0$ on $\Rd$,
\begin{align*}
    g_0(t) \coloneqq 2^{d/4} e^{-\pi |t|^2}.
\end{align*}

We now define the Feichtinger algebra as the space of all $f \in L^2(\Rd)$ whose short-time Fourier transforms with window $g_0$ lie in $L^1(\Rtd)$. 

\begin{define}[The Feichtinger Algebra]
    The set
    \begin{align*}
        \Falg \coloneqq \left\{ f \in L^2(\Rd) : \int_{\Rtd} | \langle f, \pi(z) g_0 \rangle | \, dz < \infty \right\}
    \end{align*}
    is called the \textit{Feichtinger algebra}. Fixing any nonzero $g \in \Falg$, the norm 
    \begin{align*}
        \| f \|_g \coloneqq \int_{\Rtd} | \langle f, \pi(z) g \rangle | \, dz
    \end{align*}
    turns $\Falg$ into a Banach space. Any other choice gives an equivalent norm.
\end{define}

\noindent The following lemma summarizes the properties we will need.

\begin{lemma}[Properties of the Feichtinger Algebra]
    \label{lemma_properties_of_Feichtinger}
    The following statements are true. 
    \begin{itemize}
        \item[(i)] For any $g \in \Falg$ and $A \in \GLtdR$, the Gabor system $\mathcal G(g, A)$ is a Bessel sequence.
        \item[(ii)] The Feichtinger algebra is invariant under all time-frequency shifts:\ if $g \in \Falg$, then $\pi(z) g \in \Falg$ for every $z \in \Rtd$.
        \item[(iii)] If $U$ is a unitary operator on $L^2(\Rd)$ that is \related to any symplectic matrix $S$, then $\Falg$ is invariant under $U$:\ if $g \in \Falg$, then $Ug \in \Falg$. (This is equivalent to saying that $\Falg$ is invariant under the metaplectic group.)
        \item[(iv)] For every nonzero $g \in \Falg$ and $A \in \GLtdR$, there exists $\epsilon > 0$ such that $\mathcal G(g, \epsilon A)$ is a frame and the canonical dual atom $(S^{\epsilon A}_{g, g})^{-1} g$ is in $\Falg$ as well. 
        \item[(v)] The Feichtinger algebra is dense in $L^2(\Rd)$. 
    \end{itemize}
\end{lemma}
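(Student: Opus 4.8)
The plan is to dispose of (ii) and (v) immediately, cite the standard amalgam machinery for (i), reduce (iii) to a single conjugation identity, and concentrate on (iv). For (ii), the relation $T_xM_\omega = e^{-2\pi i x\cdot\omega}M_\omega T_x$ gives $\pi(z)^{*}\pi(w) = c(z,w)\,\pi(w-z)$ for a phase factor $c(z,w)$, so $|V_{g_0}(\pi(z)g)(w)| = |\langle g, \pi(z)^{*}\pi(w)g_0\rangle| = |V_{g_0}g(w-z)|$; integrating in $w$ shows $\pi(z)g \in \Falg$ with $\|\pi(z)g\|_{g_0} = \|g\|_{g_0}$. For (v), note $g_0 \in \Falg$ because $V_{g_0}g_0$ is a Gaussian on $\Rtd$; by (ii) every $\pi(z)g_0$ then lies in $\Falg$, and since $V_{g_0}$ is an isometry on $L^2(\Rd)$ (hence injective), the functions $\pi(z)g_0$ span a dense subspace of $L^2(\Rd)$, which is contained in $\Falg$. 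For (i), the route is to show that $g \in \Falg$ forces the auto-STFT $V_gg$ into the Wiener amalgam space $W(L^\infty,\ell^1)(\Rtd)$ — this follows from the reproducing estimate $|V_gf| \le \|g_0\|_2^{-2}\,|V_{g_0}f|\ast|V_{g_0}g|$ together with window-independence (the fact that $V_\phi f \in L^1(\Rtd)$ whenever $f,\phi \in \Falg$, $\phi \neq 0$, a consequence of the same estimate and Young's inequality) — and then to invoke the classical fact that sampling a $W(L^\infty,\ell^1)$-function over any lattice yields a Bessel sequence; see Jakobsen \cite{Jakobsen} or Gröchenig \cite{Grochenig_book}.

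For (iii), I would reduce everything to one computation. Using $|\langle f, \pi(z)\phi\rangle| = |\langle f, \rho(z)\phi\rangle|$ and $U^{*}\rho(z)U = \rho(S^{-1}z)$, one gets, for $U$ \related to $S$,
\[
    |V_{g_0}(Ug)(z)| = |\langle g, U^{*}\rho(z)g_0\rangle| = |\langle g, \rho(S^{-1}z)\,U^{*}g_0\rangle| = |V_{U^{*}g_0}(g)(S^{-1}z)|,
\]
so that, since symplectic matrices have determinant $1$, $\|Ug\|_{g_0} = \int_{\Rtd}|V_{U^{*}g_0}(g)(S^{-1}z)|\,dz = \|V_{U^{*}g_0}(g)\|_{L^1}$. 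It remains to know that $U^{*}g_0 \in \Falg$; but $U^{*}$ is itself \related to a symplectic matrix, and $U^{*}g_0$ is a normalized generalized Gaussian $t \mapsto c\,e^{-\pi t^{\top}Zt}$ with $Z = Z^{\top}$ and $\operatorname{Re}Z > 0$ (the metaplectic orbit of $g_0$, up to phase, is parameterized by the Siegel upper half-space), and any such Gaussian lies in $\Falg$ because its STFT with $g_0$ is, by a Gaussian integral, again a function of Gaussian type on $\Rtd$, hence in $L^1$. Then $V_{U^{*}g_0}(g) \in L^1$ by window-independence, so $Ug \in \Falg$. Equivalently, one may verify $\Falg$-invariance directly for the generators $U_J, U_{V_P}, U_{M_L}$ of Proposition \ref{prop_generators} — Fourier invariance for $U_J$, a change of variables plus window-independence for $U_{M_L}$, a phase-space shear plus window-independence for $U_{V_P}$ — and use that both being \related to a symplectic matrix and leaving $\Falg$ invariant are preserved under composition.

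The real work is (iv). The key assertion is that $|\epsilon A|\,S^{\epsilon A}_{g,g} \longrightarrow \|g\|_2^2\,\mathrm{Id}$ in operator norm on $L^2(\Rd)$ as $\epsilon \to 0^{+}$. I would use the Janssen representation $|\epsilon A|\,S^{\epsilon A}_{g,g} = \sum_{k \in \Ztd}\langle g, \pi((\epsilon A)^{\circ}k)g\rangle\,\pi((\epsilon A)^{\circ}k)$, absolutely convergent in operator norm since $g \in \Falg$ makes $k \mapsto \langle g, \pi((\epsilon A)^{\circ}k)g\rangle = V_gg((\epsilon A)^{\circ}k)$ summable (the amalgam estimate from (i)); as $\epsilon \to 0$ the adjoint lattice $(\epsilon A)^{\circ} = \epsilon^{-1}A^{\circ}$ spreads out, the $k = 0$ term equals $\|g\|_2^2\,\mathrm{Id}$, and $\sum_{k \neq 0}|V_gg(\epsilon^{-1}A^{\circ}k)| \to 0$ with the tail dominated uniformly, while $\|\pi(w)\|_{L^2 \to L^2} = 1$; this yields the limit, hence invertibility of $S^{\epsilon A}_{g,g}$ on $L^2(\Rd)$ for small $\epsilon$, i.e.\ $\mathcal G(g, \epsilon A)$ is a frame. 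For the canonical dual atom I would run the identical estimate in $\Falg$: by (ii) each $\pi(w)$ acts on $\Falg$ with norm $1$, so $|\epsilon A|\,S^{\epsilon A}_{g,g} \to \|g\|_2^2\,\mathrm{Id}$ in the operator norm of $\Falg$ as well, whence for small $\epsilon$ the frame operator is invertible on $\Falg$ by a Neumann series and its inverse there restricts the $L^2$-inverse; therefore $(S^{\epsilon A}_{g,g})^{-1}g \in \Falg$. Alternatively one can invoke spectral invariance, $S^{\epsilon A}_{g,g}$ lying in an inverse-closed Banach $\ast$-subalgebra of the bounded operators on $L^2(\Rd)$ that also acts boundedly on $\Falg$. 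The main obstacle is precisely this last point — establishing convergence of the frame operator to a multiple of the identity \emph{in the $\Falg$-operator norm}, not merely on $L^2(\Rd)$ — since that is what pulls the canonical dual atom back into $\Falg$; everything else is a short phase-factor manipulation or a routine Wiener-amalgam argument, with complete proofs available in \cite{Jakobsen, Grochenig_book}.
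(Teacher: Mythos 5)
Your proposal is correct, but note that the paper deliberately proves nothing here: its entire proof of this lemma is a citation to Gröchenig \cite{Grochenig_book} (Chapters 11--13, with Theorem 13.1.1 singled out for point $(iv)$), plus a pointer to Gröchenig--Leinert for the stronger statement that $S_{g,g}^A$ is bijective on $\Falg$ whenever $\mathcal G(g,A)$ is a frame. What you have written is in effect a reconstruction of those cited proofs, and the reconstruction is sound. Your arguments for $(ii)$ and $(v)$ are the standard short ones; your argument for $(iii)$ via $|V_{g_0}(Ug)(z)| = |V_{U^*g_0}g(S^{-1}z)|$, $|\det S| = 1$, the fact that the metaplectic orbit of $g_0$ consists of generalized Gaussians, and window-independence of the $M^1$-norm is a legitimate alternative to checking invariance on the generators $U_J$, $U_{V_P}$, $U_{M_L}$ (which, given Appendix \ref{appendix}, would be the more self-contained route within this paper). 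Your treatment of $(iv)$ --- Janssen representation, $|\epsilon A|\, S^{\epsilon A}_{g,g} \to \|g\|_2^2\,\mathrm{Id}$ in operator norm on both $L^2(\Rd)$ and $\Falg$ because $V_gg \in W(L^\infty,\ell^1)(\Rtd)$ and time-frequency shifts act isometrically on both spaces, then a Neumann series --- is essentially the perturbation proof of Gröchenig's Theorem 13.1.1, and it correctly identifies the crux: invertibility in $B(\Falg)$ rather than merely in $B(L^2(\Rd))$, with the two inverses agreeing on $\Falg$ because $\Falg \hookrightarrow L^2(\Rd)$ continuously. The only imprecision worth flagging is in $(i)$: sampling a $W(L^\infty,\ell^1)$-function over a lattice produces a summable sequence, not ``a Bessel sequence''; the Bessel bound for $\mathcal G(g,A)$ then comes either from the estimate $\|V_g f\|_{W(L^\infty,\ell^2)} \leq C \|f\|_2 \|g\|_{M^1}$ or from feeding the summability of $V_gg$ over the adjoint lattice into the Janssen representation. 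Since you cite this step to \cite{Jakobsen,Grochenig_book} anyway, that is a wording issue rather than a gap.
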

\begin{proof}
    The proofs of all of these properties can be found in Gröchenig \cite[Chapters~11-13]{Grochenig_book}. In particular, see Theorem 13.1.1 in \cite{Grochenig_book} for point $(iv)$, which is by far the most delicate. We also remark that a much stronger version of the statement regarding the canonical dual atom is known to be true:\ if $g \in \Falg$ and $A \in \GLtdR$ are such that $\mathcal G(g, A)$ is a frame, then $S_{g, g}^A$ is a bijection on $\Falg$. In particular, the canonical dual atom is then in $\Falg$. This was proved by Gröchenig and Leinert \cite{Gröchenig_Leinert}. 
\end{proof}

We will also have need for the following exceedingly useful relation between frame operators over a lattice and its adjoint. It was discovered by Janssen \cite{Janssen} in the case of lattices of the form $\alpha\Zd \times \beta\Zd$ (with $\alpha, \beta > 0$) and atoms in the Schwartz class, and is therefore known as the \textit{Janssen representation}. It was then extended to more general lattices and atoms by Feichtinger and Zimmermann \cite{Feichtinger_Zimmermann_FIGA} and by the second named author in collaboration with Feichtinger \cite{Feichtinger_Luef_FIGA}. The generality we will need was obtained by Austad and Enstad \cite[Proposition~3.18]{Austad_Enstad}.

\begin{prop}[The Janssen Representation of the Frame Operator]
    \label{prop_Janssen}
    Let $A \in \GLtdR$ and $f, g, h \in L^2(\Rd)$ be such that $\mathcal G(f, A)$, $\mathcal G(g, A)$ and $\mathcal G(h, A)$ are Bessel sequences. Then, we have
    \begin{align*}
        S^A_{g, h} (f) = \frac{1}{|A|} S^{A^\circ}_{g, f}(h) = \frac{1}{|A|} \sum_{k \in \Ztd} \blangle h, \pi(A^\circ k) g \brangle \pi(A^\circ k) f,
    \end{align*}
    with (unconditional) convergence in $L^2(\Rd)$. By Theorem \ref{theorem_duality}, we may instead assume that $\mathcal G(f, A^\circ)$, $\mathcal G(g, A^\circ)$ and $\mathcal G(h, A^\circ)$ are Bessel sequences.
\end{prop}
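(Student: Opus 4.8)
My plan is to pass from the operator identity to its weak form, reduce to atoms in the Feichtinger algebra by a density argument, and then settle the case of Feichtinger-algebra atoms by Poisson summation on $\Rtd$. First note that both sides of the claimed identity already make sense and define bounded operators of $f$: indeed $S^A_{g,h}=D^A_h\circ C^A_g$ is bounded because $\mathcal G(g,A)$ and $\mathcal G(h,A)$ are Bessel, while $\tfrac1{|A|}S^{A^\circ}_{g,f}(h)=\tfrac1{|A|}D^{A^\circ}_f\bl(C^{A^\circ}_g h\br)$ is well defined and depends boundedly on $f$ because $\mathcal G(g,A^\circ)$ and $\mathcal G(f,A^\circ)$ are Bessel---the latter two by the Duality Theorem, since $\mathcal G(g,A)$ and $\mathcal G(f,A)$ are. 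The asserted unconditional convergence is then automatic: it is the unconditional convergence of the synthesis operator $D^{A^\circ}_f$ evaluated on the sequence $(\langle h,\pi(A^\circ k)g\rangle)_k\in\ell^2(\Ztd)$, as recalled in Section \ref{prelim_TF_Gabor}. Hence it suffices to prove, for every $\phi\in L^2(\Rd)$, the scalar identity $\langle S^A_{g,h}f,\phi\rangle=\tfrac1{|A|}\langle S^{A^\circ}_{g,f}(h),\phi\rangle$, which, written out in terms of short-time Fourier transforms, is the \emph{fundamental identity of Gabor analysis}
\begin{align*}
    \sum_{k\in\Ztd} V_gf(Ak)\,\overline{V_h\phi(Ak)} \;=\; \frac{1}{|A|}\sum_{l\in\Ztd} V_gh(A^\circ l)\,\overline{V_f\phi(A^\circ l)}. \tag{$\ast$}
\end{align*}

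Next I would reduce $(\ast)$ to the case $f,g,h,\phi\in\Falg$. Fixing any three of these four functions in $\Falg$, all of their Gabor systems over $A$ and over $A^\circ$ are Bessel by Lemma \ref{lemma_properties_of_Feichtinger}(i), so that---by the Cauchy--Schwarz inequality together with these Bessel bounds and the elementary equality $|\langle f,\pi(z)g\rangle|=|\langle g,\pi(-z)f\rangle|$---both sides of $(\ast)$ are bounded (conjugate-)linear functionals of the fourth. Since $\Falg$ is dense in $L^2(\Rd)$ (Lemma \ref{lemma_properties_of_Feichtinger}(v)), we may replace the four functions by arbitrary $L^2$-atoms one at a time; this requires choosing the order of the four approximations with some care (e.g.\ $\phi$, then $f$, then $g$, then $h$), so that at every stage the Bessel bounds needed to control the current functional---over $A$, or, via the Duality Theorem, over $A^\circ$---are at hand.

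It remains to prove $(\ast)$ for $f,g,h,\phi\in\Falg$. Put $F\defeq V_gf\cdot\overline{V_h\phi}$; since $V_gf$ and $V_h\phi$ lie in the Feichtinger algebra $M^1(\Rtd)$ of $\Rtd$, which is a pointwise algebra closed under conjugation, we have $F\in M^1(\Rtd)$, and Poisson summation on $\Rtd$ holds for $F$ with absolute convergence: $\sum_{k\in\Ztd}F(Ak)=\tfrac1{|A|}\sum_{l\in\Ztd}\widehat F(A^{-T}l)$, with $\widehat F$ the Fourier transform on $\Rtd$. The left-hand side is already the left-hand side of $(\ast)$; for the right-hand side I would compute $\widehat F$ by inserting the integral definitions of $V_gf$ and $V_h\phi$ and applying Fubini: the integration in the frequency variable of $z=(x,\omega)$ collapses to a Dirac delta, and a single change of variables gives $\widehat F(\zeta)=V_\phi f(-J\zeta)\,\overline{V_h g(-J\zeta)}$. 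Evaluating at $\zeta=A^{-T}l$ and recalling $A^\circ=-JA^{-T}$, the Poisson sum becomes $\tfrac1{|A|}\sum_l V_\phi f(A^\circ l)\,\overline{V_h g(A^\circ l)}$; applying the symmetry relation $V_ab(z)=e^{-2\pi i x\cdot\omega}\,\overline{V_ba(-z)}$ (for $z=(x,\omega)$) to each of the two factors causes the phase factors to cancel, and after reindexing $l\mapsto-l$ one obtains exactly the right-hand side of $(\ast)$, since $V_gh(A^\circ l)=\langle h,\pi(A^\circ l)g\rangle$ and $\overline{V_f\phi(A^\circ l)}=\langle\pi(A^\circ l)f,\phi\rangle$.

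I expect the main obstacle to be not this last computation, which is routine, but making the density reduction airtight in the stated generality---with only $L^2$-atoms and only the Bessel hypothesis, rather than the frame hypothesis---i.e.\ organising the four one-variable approximations and tracking which Bessel bounds (over $A$ or over $A^\circ$) are available at each stage; this bookkeeping, together with the systematic use of the Duality Theorem, is essentially the refinement obtained by Austad and Enstad. One may alternatively avoid Poisson summation altogether: $S^A_{g,h}$ commutes with every $\pi(Ak)$, $k\in\Ztd$, so by the structure of the commutant of $\{\pi(\lambda):\lambda\in A\Ztd\}$ it expands as an unconditionally convergent series $\sum_l c_l\,\pi(A^\circ l)$ over the adjoint lattice, and computing the coefficients $c_l$ recovers the same formula; this route invokes the von Neumann algebraic machinery of the noncommutative torus attached to $A^TJA$.
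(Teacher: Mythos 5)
The paper does not actually prove Proposition \ref{prop_Janssen}: it is quoted as a known result, with the generality needed here (only Bessel hypotheses, arbitrary $L^2$ atoms) attributed to Austad and Enstad \cite[Proposition~3.18]{Austad_Enstad}, and the classical cases to Janssen, Feichtinger--Zimmermann and Feichtinger--Luef. So there is no in-paper argument to compare against; what you have written is, in outline, precisely the standard proof from that literature. Your three-step plan is sound: the weak form of the identity is the fundamental identity of Gabor analysis $(\ast)$, the case $f,g,h,\phi\in\Falg$ follows from Poisson summation applied to $F=V_gf\cdot\overline{V_h\phi}$ (using that $V_gf\in M^1(\Rtd)$ for $f,g\in\Falg$, that $M^1$ is a pointwise algebra closed under conjugation, and that the Fourier transform of $F$ is the product $V_\phi f\cdot\overline{V_hg}$ read through the symplectic rotation $-J$, which is exactly where $A^\circ=-JA^{-T}$ enters), and the passage to general atoms is a four-fold density argument. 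You correctly identify the density step as the only delicate point, and your bookkeeping does close: at each stage the varying function is controlled by its $L^2$-norm times a Bessel bound of one of the fixed functions, over $A$ for the left side and over $A^\circ$ for the right side, the latter bounds being supplied by the Duality Theorem from the hypotheses (and by Lemma \ref{lemma_properties_of_Feichtinger}(i) for the functions already replaced by Feichtinger atoms); the swap $|V_gf(z)|=|V_fg(-z)|$ lets you move the varying function out of the window slot where needed. Two small caveats: the preliminaries of this paper only define $\Falg=M^1(\Rd)$, so the facts about $M^1(\Rtd)$ (membership of $V_gf$, algebra property, Poisson summation with absolute convergence) would have to be imported from Jakobsen's survey or Gröchenig's book rather than from anything stated here; and your alternative route via the commutant of $\{\pi(\lambda):\lambda\in A\Ztd\}$ is a genuinely different (von Neumann algebraic) proof, but as sketched it presupposes the unconditionally convergent expansion over the adjoint lattice, which is essentially equivalent to what is being proved, so it should be regarded as a pointer rather than a proof.
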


\section{Classification of Lattices via Symplectic Forms}

\subsection{Lattices and Symplectic Forms}
\label{sec_lattices_and_forms}

With $J$ as in Equation \eqref{std_symp_matrix} and $A \in \GLtdR$, we can write the commutation relations \eqref{std_commutation_relation_with_J} on the lattice $A \Ztd$ as
\begin{align}
    \label{eq_final_commutation_relation}
    \pi(Ak)\pi(Al) = e^{2 \pi i l^T (A^T J A) k} \pi(Al) \pi(Ak) \quad \text{for all } k, l \in \Ztd.
\end{align}
We see that the commutation relations depend (only) on the symplectic form $A^T J A$ determined by $A$. This happens because we are parameterizing the lattice by $\Ztd$. We are simply doing a change of basis:\ the standard symplectic form $J$ over $\Lambda = A \Ztd$ becomes the symplectic form $A^T J A$ over $\Ztd$. This shows how the matrix $A^T J A$ is naturally linked to the lattice $A\Ztd$, for it controls the commutation relations of time-frequency shifts over that lattice! In Section \ref{sec_geometric_content}, we will discuss how $A^T J A$ relates to the geometry of the lattice.

Recall Lemma \ref{lemma_symplectic_conjugation}:\ given any $A, B \in \GLtdR$, we have $A^T J A = B^T J B$ if and only if $BA^{-1}$ is symplectic. Furthermore, if $BA^{-1}$ is symplectic, then certainly $A \Ztd$ and $B\Ztd = (BA^{-1}) A \Ztd$ are related by a symplectic transformation. This helps us understand why Theorem \ref{thm_covariance} (symplectic covariance) should be true:\ \textit{symplectic transformations preserve the commutation relations}. That is, for $S \in \SptdR$, we have
\begin{align*}
    \pi(SAk)\pi(SAl) = e^{2 \pi i l^T ((SA)^T J (SA)) k} \pi(SAl) \pi(SAk),
\end{align*}
and $(SA)^T J (SA) = A^T (S^T J S) A = A^T J A$.

There is a somewhat subtle point here. One could look at the commutation relations in the form of Equation \eqref{std_commutation_relation_with_J} with $z, w \in \Lambda$ and think that $J$ controls the commutation relations over $\Lambda$. The point, however, is that the phase factors that arise when commuting time-frequency shifts over $B \Ztd$ generally will differ from those that arise when commuting time-frequency shifts over $A \Ztd$, \textit{unless} $A^T J A = B^T J B$. This point becomes clearest when one thinks of the collection $\{ \pi(Ak) : k \in \Ztd \}$ as a projective representation of $\Ztd$ on $L^2(\Rd)$. But this is all just to motivate the appearance of the symplectic form $A^T J A$—our results will illustrate its importance.

We now extend Theorem \ref{thm_covariance} to encompass duality while formulating it in terms of symplectic forms. This formulation is much better suited to concrete examples and computations;\ one could hardly imagine a simpler condition to check than $A^T JA = B^T JB$. For the notion of $\rho$-relatedness, see Definition \ref{rho_related}.

\begin{theorem}
    \label{theorem_main} 
    Suppose $A, B \in \GLtdR$ are such that $A^T J A = B^T J B$. Choose any unitary operator $U$ on $L^2(\Rd)$ that is \related to the symplectic matrix $BA^{-1}$. Then, the following statements are true for all $g, h \in L^2(\Rd)$.
    \begin{itemize}
        \item[(i)] $\mathcal G(g, A)$ is a Bessel sequence if and only if $\mathcal G(Ug, B)$ is a Bessel sequence.
        \item[(ii)] $\mathcal G(g, A)$ is a Gabor frame if and only if $\mathcal G(Ug, B)$ is a Gabor frame.
        \item[(iii)] If $\mathcal G(g, A)$ and $\mathcal G(h, A)$ are Bessel sequences, then $S^B_{Ug, Uh} = U (S^A_{g, h}) U^*$.
    \end{itemize}
    In the case of $(i)$ (or $(ii)$), the optimal Bessel bounds (or frame bounds) of $\mathcal G(g, A)$ and $\mathcal G(Ug, B)$ are equal. Moreover, this correspondence respects duality:\ $(i)$, $(ii)$ and $(iii)$ are all true with $A^\circ$ and $B^\circ$ in place of $A$ and $B$, with the same operator $U$.
\end{theorem}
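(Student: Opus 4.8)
The plan is to deduce everything from the single defining relation $U\rho(z)U^* = \rho(S z)$ with $S = BA^{-1}$, together with the identity $|\langle f, \pi(z)g\rangle| = |\langle f, \rho(z)g\rangle|$ and the fact that $SA\mathbb Z^{2d} = B\mathbb Z^{2d}$. First I would record the key computation: for $k \in \mathbb Z^{2d}$, writing $z = Ak$, we have $U\rho(Ak)U^* = \rho(BA^{-1}Ak) = \rho(Bk)$. Hence for any $f, g \in L^2(\Rd)$,
\begin{align*}
    \langle Uf, \rho(Bk)\, Ug \rangle = \langle Uf, U\rho(Ak) U^* Ug \rangle = \langle f, \rho(Ak) g \rangle,
\end{align*}
since $U$ is unitary. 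Replacing $f$ by $U^*f$ (equivalently, letting $f$ range over all of $L^2$), we get $|\langle f, \rho(Bk)Ug\rangle| = |\langle U^*f, \rho(Ak)g\rangle|$, and since $U^*$ is a bijection of $L^2(\Rd)$ and $\|U^*f\| = \|f\|$, the two quadratic sums $\sum_k |\langle f, \pi(Bk)Ug\rangle|^2$ and $\sum_k |\langle f, \pi(Ak)g\rangle|^2$ agree after the substitution $f \leftrightarrow U^* f$. This immediately gives (i) with matching optimal Bessel bounds, and likewise (ii) with matching optimal frame bounds (the upper and lower frame inequalities are preserved with the same constants).

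For (iii), assuming $\mathcal G(g,A)$ and $\mathcal G(h,A)$ are Bessel sequences (hence so are $\mathcal G(Ug,B)$ and $\mathcal G(Uh,B)$ by (i)), I would compute, for $f \in L^2(\Rd)$, using unconditional convergence of the synthesis sums:
\begin{align*}
    S^B_{Ug, Uh}(f) = \sum_{k} \langle f, \pi(Bk)\, Ug\rangle\, \pi(Bk)\, Uh
    = \sum_k \langle U^*f, \pi(Ak) g\rangle\, U\pi(Ak) h,
\end{align*}
where I have used $\langle f, \pi(Bk)Ug\rangle = \langle f, \rho(Bk)Ug\rangle \cdot (\text{phase}) = \langle U^*f, \rho(Ak)g\rangle \cdot(\text{phase})$ and $\pi(Bk)Uh = U\pi(Ak)h \cdot (\overline{\text{phase}})$ — the phase factors relating $\pi$ to $\rho$ on the lattice point $Ak$ versus $Bk$ are identical because they depend only on the symplectic pairing of the \emph{components}, and here the crucial cancellation is that the same phase appears in the analysis coefficient and in the synthesis vector (one conjugated), exactly as in the remark following the definition of $S^A_{g,h}$. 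Pulling $U$ out of the sum (it is bounded) yields $U\left(\sum_k \langle U^*f, \pi(Ak)g\rangle \pi(Ak)h\right) = U S^A_{g,h}(U^*f)$, i.e.\ $S^B_{Ug,Uh} = U S^A_{g,h} U^*$.

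Finally, for the duality claim, the point is simply that $S = BA^{-1}$ is symplectic and $(SA)^\circ = -J(SA)^{-T} = -JA^{-T}S^{-T} = (-JA^{-T}S^{-T}J^{-1})J = \dots$; more cleanly, since $S \in \SptdR$ one checks $S A^\circ = S(-JA^{-T}) = -JS^{-T}A^{-T}\cdot(\text{adjust}) = -J(A^TS^T)^{-T}\cdots$ — the clean statement is $(SA)^\circ = S A^\circ$, which follows from $S^{-T} = JSJ^{-1}$ (equivalently $S^TJS = J$) by a one-line computation: $-J(SA)^{-T} = -JS^{-T}A^{-T} = -J(JSJ^{-1})A^{-T} = S(-JA^{-T}) \cdot (\text{using }J^{-1} = -J, J^2 = -I)$, so $B^\circ = (SA)^\circ = SA^\circ = (BA^{-1})A^\circ$. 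Moreover $(A^\circ)^T J (A^\circ) = (B^\circ)^T J (B^\circ)$ because both equal a fixed function of $A^TJA$ (concretely, $(A^\circ)^TJA^\circ = -(A^{-1}J^{-1})J(-JA^{-T}) = A^{-1}J A^{-T} = -(A^TJ^{-1}A)^{-1}\cdot(\text{sign})$ depends only on $A^TJA$), and $B^\circ(A^\circ)^{-1} = (SA)^\circ(A^\circ)^{-1} = SA^\circ(A^\circ)^{-1} = S$ is the \emph{same} symplectic matrix. Hence applying parts (i)--(iii), which have already been proved for an arbitrary pair determining the same symplectic form and an arbitrary $\rho$-related unitary, to the pair $(A^\circ, B^\circ)$ with the same $U$ (which is $\rho$-related to $S = B^\circ(A^\circ)^{-1}$) gives the duality statement verbatim.

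The main obstacle I anticipate is purely bookkeeping: making the phase-factor cancellation in step (iii) airtight — precisely tracking that the $\pi$-versus-$\rho$ discrepancy on $\pi(Bk)$ is the \emph{same} phase $e^{\pm\pi i \langle\cdot\rangle}$ as on $\pi(Ak)$ is false in general (the lattice points differ!), so one must instead argue as in the preliminaries that the discrepancies cancel \emph{within} each term of the frame-operator sum separately for the $A$-side and the $B$-side, and then invoke $U\rho(Ak)U^* = \rho(Bk)$ directly on the symmetrized shifts, never converting to $\pi$ at all. Written that way — keep everything in terms of $\rho$, use $S^A_{g,h} = D^A_h C^A_g$ with the symmetrized analysis/synthesis operators whose phase factors cancel in the composition — the computation is clean and the only real content is unconditional convergence (Lemma~\ref{lemma_properties_of_Feichtinger} is not even needed; boundedness of $U$ and of the Bessel synthesis operator suffice).
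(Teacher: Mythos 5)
Your proposal is correct and follows essentially the same route as the paper: derive $U\rho(Ak)U^*=\rho(Bk)$, read off (i), (ii) and the bounds from the resulting equality of quadratic sums under $f\mapsto U^*f$, prove (iii) by writing the frame operator entirely in terms of symmetrized shifts (which is exactly how the paper avoids the phase-factor bookkeeping you worry about, invoking the preliminary remark that $S^A_{g,h}$ is unchanged when $\pi$ is replaced by $\rho$), and obtain the duality claim from $(B^\circ)(A^\circ)^{-1}=BA^{-1}$. Your self-correction in the final paragraph lands precisely on the paper's argument, so no changes are needed beyond cleaning up the intermediate $(SA)^\circ$ computation.
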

\begin{proof}
    The unitary operator $U$ exists by Proposition \ref{prop_metaplectic_ops}, and we have 
    \begin{align*}
        U \rho(Ak) = \rho((BA^{-1})A k) U = \rho(B k) U \quad \text{for all } k \in \Ztd.
    \end{align*}
    The resulting equality
    \begin{align*}
        \sum_{k \in \Ztd} \bl| \blangle f, \rho(Bk) Ug \brangle \br|^2 =  \sum_{k \in \Ztd} \bl| \blangle U^* f, \rho(Ak) g \brangle \br|^2 \quad \text{for all } f \in L^2(\Rd)
    \end{align*}
    easily gives $(i)$ and $(ii)$, as well as equality of the optimal bounds in both cases. For example, if we assume that $\mathcal G(g, A)$ is a Bessel sequence with Bessel bound $C > 0$, then 
    \begin{align*}
        \sum_{k \in \Ztd} \bl| \blangle f, \rho(Bk) Ug \brangle \br|^2 =  \sum_{k \in \Ztd} \bl| \blangle U^* f, \rho(Ak) g \brangle \br|^2 \leq C \| U^* f \|^2 = C\| f \|^2
    \end{align*}
    shows that $\mathcal G(Ug, B)$ is a Bessel sequence with Bessel bound $C$.

    For $(iii)$, we have
    \begin{align*}
        S^B_{Ug, Uh} (Uf) &= \sum_{k \in \Ztd} \blangle Uf, \rho(Bk) Ug \brangle \rho(Bk) Uh \\ &= \sum_{k \in \Ztd} \blangle f, \rho(Ak) g \brangle U \rho(Ak) h = U S^A_{g, h} (f)
    \end{align*}
    for all $f\in L^2(\Rd)$, so $S^B_{Ug, Uh} U = U S^A_{g, h}$. As for the claim regarding duality, we have
    \begin{align*}
        (B^\circ)(A^\circ)^{-1} = (-JB^{-T})(-J A^{-T})^{-1} = -J B^{-T} A^T J = -J (B A^{-1})^{-T} J.
    \end{align*}
    Since $BA^{-1}$ is symplectic, we have $J (BA^{-1}) = (BA^{-1})^{-T} J$, and hence
    \begin{align*}
        (B^\circ)(A^\circ)^{-1} = - JJ BA^{-1} = BA^{-1}.
    \end{align*}
    This means that any unitary operator $U$ that is \related to $BA^{-1}$ is also \related to $(B^\circ)(A^\circ)^{-1}$, so $U\rho(A^\circ k) = \rho(B^\circ k) U$, and the proofs of $(i), (ii)$ and $(iii)$ all carry through with $A^\circ$ and $B^\circ$ in place of $A$ and $B$.
\end{proof}

The conventional form of symplectic covariance (with the addition of duality) follows easily:

\begin{corollary}
\label{corollary_main}
Let $S \in \SptdR$ and choose any unitary operator $U$ on $L^2(\Rd)$ that is \related to $S$. Then, for every lattice $\Lambda \subset \Rtd$, the following holds.
\begin{itemize}
    \item[(i)] $\mathcal G(g, \Lambda)$ is a Bessel sequence if and only if $\mathcal G(Ug, S\Lambda)$ is a Bessel sequence.
    \item[(ii)] $\mathcal G(g, \Lambda)$ is a Gabor frame if and only if $\mathcal G(Ug, S\Lambda)$ is a Gabor frame.
    \item[(iii)] If $\mathcal G(g, \Lambda)$ and $\mathcal G(h, \Lambda)$ are Bessel sequences, then $S^{S\Lambda}_{Ug, Uh} = U (S^\Lambda_{g, h}) U^*$.
\end{itemize}
Moreover, as in Theorem \ref{theorem_main}, we have equality of the optimal Bessel bounds and frame bounds, and duality is respected:\ $(S\Lambda)^\circ = S\Lambda^\circ$.
\end{corollary}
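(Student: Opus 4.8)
The plan is to deduce the corollary directly from Theorem \ref{theorem_main} by picking a convenient lattice matrix. First I would fix any lattice matrix $A \in \GLtdR$ for $\Lambda$ (one exists by the definition of a lattice), so that $\Lambda = A\Ztd$, and set $B \defeq SA$. Then $B \in \GLtdR$ because $S \in \SptdR \subset \GLtdR$, and $S\Lambda = SA\Ztd = B\Ztd$, so $B$ is a lattice matrix for $S\Lambda$. Since $S$ is symplectic, Lemma \ref{lemma_symplectic_conjugation} gives $A^T J A = B^T J B$, and trivially $BA^{-1} = S$. Hence any unitary $U$ that is \related to $S$ is also \related to $BA^{-1}$, and Theorem \ref{theorem_main} applies verbatim with this $A$, this $B$, and this $U$.

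Next I would translate the three conclusions. Using $\mathcal G(g, \Lambda) = \mathcal G(g, A)$ and $\mathcal G(Ug, S\Lambda) = \mathcal G(Ug, B)$, parts (i), (ii), (iii) of the corollary—together with the equality of optimal Bessel and frame bounds—are literally parts (i), (ii), (iii) and the bound statement of Theorem \ref{theorem_main}. None of these conclusions refer to the auxiliary matrix $A$; they involve only $\Lambda$, $S\Lambda$, and $U$ (which is \related to $S$ independently of $A$), so the choice of lattice matrix is immaterial and the corollary is well posed.

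For the duality assertion, I would compute $B^\circ = (SA)^\circ = -J(SA)^{-T} = -JS^{-T}A^{-T}$, and use the symplectic identity $JS = S^{-T}J$ (equivalently $S^{-T} = JSJ^{-1}$, which follows from $S^T J S = J$) to get $-JS^{-T}A^{-T} = S(-JA^{-T}) = SA^\circ$—this is exactly the identity $(B^\circ)(A^\circ)^{-1} = BA^{-1}$ already established inside the proof of Theorem \ref{theorem_main}. Therefore $(S\Lambda)^\circ = B^\circ \Ztd = SA^\circ\Ztd = S\Lambda^\circ$, and since $\mathcal G(Ug, B^\circ) = \mathcal G(Ug, S\Lambda^\circ)$, the ``moreover'' clause of Theorem \ref{theorem_main} (that (i)–(iii) hold with $A^\circ$ and $B^\circ$ in place of $A$ and $B$, with the same $U$) immediately yields the corresponding statements for $\Lambda^\circ$ and $S\Lambda^\circ$. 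I do not expect any genuine obstacle here: the corollary is a repackaging of Theorem \ref{theorem_main}, and the only points requiring a moment's care are the bookkeeping that $SA$ is a legitimate lattice matrix for $S\Lambda$, the collapse $BA^{-1} = S$, and the short symplectic computation identifying $(S\Lambda)^\circ$ with $S\Lambda^\circ$.
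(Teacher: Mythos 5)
Your proposal is correct and follows exactly the paper's route: choose a lattice matrix $A$ for $\Lambda$, set $B = SA$ so that $BA^{-1} = S$ and $A^TJA = B^TJB$, apply Theorem \ref{theorem_main}, and recover $(S\Lambda)^\circ = S\Lambda^\circ$ from the identity $(B^\circ)(A^\circ)^{-1} = BA^{-1}$ established in that theorem's proof. No gaps.
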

\begin{proof}
    Choose a lattice matrix $A$ for $\Lambda$ and note that $B \coloneqq SA$ and $U$ satisfy the conditions of Theorem \ref{theorem_main}. For the equality $(S\Lambda)^\circ = S\Lambda^\circ$, note that we actually proved that $(B^\circ)(A^\circ)^{-1} = BA^{-1}$, and here $S = BA^{-1}$, so we have $S A^\circ = (B^\circ)(A^\circ)^{-1} A^\circ = B^\circ = (SA)^\circ$.
\end{proof}

Before we proceed to formalize the content of Theorem \ref{theorem_main}, we wish to give an example (due to de Gosson \cite{deGossonMauriceA.2015HdoG}) showing the power of Corollary \ref{corollary_main}. Given any $g \in L^2(\Rd)$, we denote by $\mathscr F(g)$ the \textit{frame set} of $g$. This is the set of those lattices over which $g$ generates a Gabor frame, i.e.\
\begin{align*}
    \mathscr F(g) \coloneqq \{ A\Ztd : \mathcal G(g, A) \text{ is a frame} \}.
\end{align*}
Consider an arbitrary $L^2$-normalized Gaussian on $\Rd$,
\begin{align}
    \label{eq_def_gaussian}
    g_{X, Y}(t) = 2^{d/4} (\det X)^{1/4}  e^{-\pi t^T(X + i Y)t},
\end{align}
where $X$ and $Y$ are real and symmetric $d \times d$ matrices and $X$ is positive definite. Let $X^{1/2}$ denote the positive definite square root of $X$, and consider the unitary operators $U_{V_Y}$ and $U_{M_{X^{1/2}}}$ on $L^2(\Rd)$ defined by
\begin{align*}
    U_{V_Y}f(t) = e^{-\pi i t^T Y t} f(t) \quad \text{and} \quad U_{M_{X^{1/2}}} f(t) = \sqrt{\det(X^{1/2})} f(X^{1/2}t).
\end{align*}
By Proposition \ref{prop_generators}, these are \related to the symplectic matrices
\begin{align*}
    V_Y = \begin{pmatrix}
        I & 0 \\ -Y & I
    \end{pmatrix} \quad \text{and} \quad M_{X^{1/2}} = \begin{pmatrix}
        (X^{1/2})^{-1} & 0 \\ 0 & (X^{1/2})^T
    \end{pmatrix} = \begin{pmatrix}
        X^{-1/2} & 0 \\ 0 & X^{1/2}
    \end{pmatrix},
\end{align*}
respectively. 

The following result is due to de Gosson \cite[Proposition~13]{deGossonMauriceA.2015HdoG}. We reproduce it here in more conventional Gabor-analytic terms in order to illustrate the power of symplectic covariance.

\begin{corollary}[Frame Sets of Gaussians; de Gosson]
    \label{corollary_frame_sets_Gaussians}
    Let $g_0(t) = 2^{d/4} e^{-\pi |t|^2}$ be the standard ($L^2$-normalized) Gaussian on $\Rd$, and let $\mathscr F(g_0)$ be its frame set. Then, the frame set of the Gaussian $g_{X, Y}$ (Equation \eqref{eq_def_gaussian}) is given by
    \begin{align*}
        \mathscr F(g_{X, Y}) = S(\mathscr F(g_0)) = \{ S\Lambda : \Lambda \in \mathscr F(g_0)  \}, \text{ where } S = \begin{pmatrix}
            X^{-1/2} & 0 \\ -YX^{-1/2} & X^{1/2}
        \end{pmatrix}.
    \end{align*}
\end{corollary}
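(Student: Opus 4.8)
The plan is to realize the Gaussian $g_{X,Y}$ as the image $U g_0$ of the standard Gaussian under a single unitary operator $U$ that is \related to the symplectic matrix $S$ appearing in the statement, and then to read off the conclusion from Corollary \ref{corollary_main}(ii). Concretely, I would take $U \defeq U_{V_Y} U_{M_{X^{1/2}}}$, the composition of the two unitaries defined just before the statement. By Proposition \ref{prop_generators} (applied with $P = Y$ and $L = X^{1/2}$, the latter being symmetric and invertible since $X$ is positive definite), $U_{V_Y}$ is \related to $V_Y$ and $U_{M_{X^{1/2}}}$ is \related to $M_{X^{1/2}}$, so by the composition remark following Definition \ref{rho_related}, $U$ is \related to the product $V_Y M_{X^{1/2}}$.

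The first thing to verify is the purely linear-algebraic identity
\begin{align*}
    V_Y M_{X^{1/2}} = \begin{pmatrix} I & 0 \\ -Y & I \end{pmatrix}\begin{pmatrix} X^{-1/2} & 0 \\ 0 & X^{1/2} \end{pmatrix} = \begin{pmatrix} X^{-1/2} & 0 \\ -YX^{-1/2} & X^{1/2} \end{pmatrix} = S,
\end{align*}
where I use that $X^{1/2}$ is symmetric, so $(X^{1/2})^{-1} = X^{-1/2}$ and $(X^{1/2})^T = X^{1/2}$. Hence $U$ is \related to $S$. Next I would compute $U g_0$ directly: applying $U_{M_{X^{1/2}}}$ first gives $U_{M_{X^{1/2}}} g_0(t) = \sqrt{\det(X^{1/2})}\, g_0(X^{1/2} t) = 2^{d/4}(\det X)^{1/4} e^{-\pi\, t^T X t}$, using $\det(X^{1/2}) = (\det X)^{1/2}$ and $|X^{1/2} t|^2 = t^T X t$ (again by symmetry of $X^{1/2}$); then applying $U_{V_Y}$ multiplies by $e^{-\pi i\, t^T Y t}$, producing exactly $2^{d/4}(\det X)^{1/4} e^{-\pi\, t^T (X + i Y) t} = g_{X,Y}(t)$. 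Thus $U g_0 = g_{X,Y}$ with no stray phase factor.

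To finish, note that $S$ is symplectic, hence invertible, so $\Lambda \mapsto S\Lambda$ is a bijection on the set of lattices in $\Rtd$; and by Corollary \ref{corollary_main}(ii), for every lattice $\Lambda$ the system $\mathcal G(g_0, \Lambda)$ is a frame if and only if $\mathcal G(U g_0, S\Lambda) = \mathcal G(g_{X,Y}, S\Lambda)$ is a frame. Equivalently $\Lambda \in \mathscr F(g_0) \iff S\Lambda \in \mathscr F(g_{X,Y})$, which is precisely $\mathscr F(g_{X,Y}) = S(\mathscr F(g_0))$.

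There is essentially no hard analytic step here; the proof is bookkeeping, and the only points requiring care are (a) composing the unitaries in the order that yields $V_Y M_{X^{1/2}}$ rather than $M_{X^{1/2}} V_Y$ (these differ), (b) invoking symmetry of $X^{1/2}$ wherever a transpose or inverse appears, and (c) matching the normalization constant $2^{d/4}(\det X)^{1/4}$ in the definition of $g_{X,Y}$ against what the computation of $U g_0$ produces. One could alternatively avoid committing to a specific $U$: any unitary \related to $S$ maps $g_0$ to a phase multiple of $g_{X,Y}$, and scalar multiples of an atom generate the same Gabor frames, so the conclusion is unaffected — but exhibiting the explicit $U$ above is cleaner and slightly more informative.
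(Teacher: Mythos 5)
Your proposal is correct and follows the paper's own argument essentially verbatim: both identify $g_{X,Y} = U_{V_Y}U_{M_{X^{1/2}}}g_0$, observe that this composite unitary is \related to $V_Y M_{X^{1/2}} = S$, and conclude via Corollary \ref{corollary_main}. The extra verifications you supply (the order of composition, symmetry of $X^{1/2}$, and the normalization constant) are exactly the details the paper leaves implicit.
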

\begin{proof}
    Since
    \begin{align*}
        g_{X, Y}(t) = 2^{d/4} (\det X)^{1/4} e^{-\pi t^T(X + i Y)t} = U_{V_Y} U_{M_{X^{1/2}}} g_0(t)
    \end{align*}
    and $U_{V_Y} U_{M_{X^{1/2}}}$ is \related to the symplectic matrix
    \begin{align*}
        S = V_Y M_{X^{1/2}} = \begin{pmatrix}
            I & 0 \\ -Y & I
        \end{pmatrix} \begin{pmatrix}
            X^{-1/2} & 0 \\ 0 & X^{1/2}
        \end{pmatrix} = \begin{pmatrix}
            X^{-1/2} & 0 \\ -YX^{-1/2} & X^{1/2}
        \end{pmatrix},
    \end{align*}
    Corollary \ref{corollary_main} tells us that $\mathcal G(g_0, \Lambda)$ is a frame if and only if $\mathcal G(g_{X, Y}, S\Lambda)$ is a frame. 
\end{proof}

\begin{remark}
    \label{remark_Franz_Xu}
    This result also follows from the correspondence between Gabor frames with Gaussian windows and sets of interpolation in Bargmann-Fock spaces (see Corollary 1.9 in the article \cite{Franz_Xu} by the second named author and Wang), but the perspective of symplectic covariance makes it much more explicit and also shows that it is an instance of a much more general phenomenon.
\end{remark}

We now introduce some terminology that will allow us to speak about Theorem \ref{theorem_main} in a more concise and precise manner.

\begin{define}[Symplectically Related Lattices]
\label{def_symplectically_related}
We say that two lattices $\Lambda$ and $\Lambda'$ in $\Rtd$ are \textit{symplectically related} if there exists some $S \in \SptdR$ such that $\Lambda' = S\Lambda$.
\end{define}

\begin{define}[Equivalence of Gabor Structures]
    \label{definition_main}
    Let $\Lambda$ and $\Lambda'$ be two lattices in $\Rtd$. We say that the \textit{Gabor structures}\footnote{The \textit{Gabor structure} over $\Lambda$ is supposed to be short for ``the structure of Gabor systems over $\Lambda$''.} over $\Lambda$ and $\Lambda'$ are \textit{equivalent} if there exists an invertible bounded linear operator $\F \from L^2(\Rd) \to L^2(\Rd)$ such that, for every $g, h \in L^2(\Rd)$,
    \begin{itemize}
            \item[(i)] $\mathcal G(\F g, \Lambda')$ is a Bessel sequence if and only if $\mathcal G(g, \Lambda)$ is a Bessel sequence, and
            \item[(ii)] $S^{\Lambda'}_{\F g, \F h} = \F (S^\Lambda_{g, h}) \F^{-1}$ whenever $\mathcal G(g, \Lambda)$ and $\mathcal G(h, \Lambda)$ are Bessel sequences.
    \end{itemize}
    We say that the equivalence is \textit{implemented} by $\F$. If $\F$ can be taken to be unitary, we say that the Gabor structures over $\Lambda$ and $\Lambda'$ are \textit{unitarily equivalent}.
\end{define}

\noindent It is a basic result in Gabor analysis that a Bessel sequence $\mathcal G(g, \Lambda)$ is a frame if and only if the frame operator $S_{g, g}^\Lambda$ on $L^2(\Rd)$ is invertible. This implies that if the Gabor structures over $\Lambda$ and $\Lambda'$ are equivalent, with the equivalence implemented by $\F$, then $\mathcal G(\F g, \Lambda')$ is a frame if and only if $\mathcal G(g, \Lambda)$ is a frame. Moreover, Bessel bounds and frame bounds can be calculated in terms of the spectra of the corresponding frame operators. Since the spectrum of an operator is invariant under conjugation by an invertible operator, the equality of optimal Bessel bounds and frame bounds also follows from equivalence of Gabor structures.\footnote{The invariance of the spectrum under conjugation follows immediately from its definition:\ $G-\alpha I$ is invertible if and only if $\F G\F^{-1} - \alpha I = \F(G - \alpha I)\F^{-1}$ is invertible.} Thus, the defining conditions for equivalence of Gabor structures give us all the properties we could want—they are chosen to be minimal. (This also shows that the conclusions in Theorem \ref{theorem_main} and Corollary \ref{corollary_main} are redundant:\ conclusion $(ii)$ and the remarks regarding optimal bounds follow from $(i)$ and $(iii)$.)

In the language of these definitions, Corollary \ref{corollary_main} says that if two lattices $\Lambda$ and $\Lambda'$ are symplectically related, then their structures of Gabor systems are unitarily equivalent, with the equivalence implemented by any unitary operator $U$ that is \related to the symplectic transformation taking $\Lambda$ to $\Lambda'$. Theorem \ref{theorem_main} gives us a very simple sufficient condition for checking whether two lattices are symplectically related:\ two lattices $\Lambda = A \Ztd$ and $\Lambda' = B\Ztd$ are symplectically related if $A^T J A = B^T J B$. Thus, if $A^T J A = B^T J B$, then the Gabor structures over $A \Ztd$ and $B \Ztd$ are unitarily equivalent. Since a skew symmetric $2d \times 2d$ matrix has $2d (2d-1)/2 = 2d^2 - d$ independent entries, this means that although there are $4d^2$ degrees of freedom in the choice of lattice matrix $A \in \GLtdR$, there are at most $2d^2 - d$ degrees of freedom when it comes to the structures of Gabor systems over those lattices. In Section \ref{sec_separable_lattices}, we will see that when it comes to separable lattices, i.e.\ lattices of the form $A_1 \Zd \times A_2 \Zd$ with $A_1, A_2 \in \GLdR$, there are at most $d^2$ degrees of freedom for Gabor structures, and they are in fact naturally parameterized by $\GLdR$ (not $\GLdR \times \GLdR$).

The interplay between lattice matrices and symplectic forms ($A \mapsto A^T J A$) is simple, but the interplay between \textit{lattices} and symplectic forms is complicated somewhat by the redundancy of lattice matrices described in Remark \ref{remark_redundancy_of_lattice_matrices}. Given a lattice $\Lambda = A \Ztd$, we can associate to it the \textit{set} 
\begin{align}
    \label{eq_set_of_symp_forms}
    \Theta_\Lambda \coloneqq \{ (A \M)^T J (A \M) :  \M \in \GLtdZ \}
\end{align}
of all symplectic forms determined by its set $A \GLtdZ$ of lattice matrices. These are easily seen to be either equal or disjoint:\footnote{In fact, $\Theta_\Lambda$ is the orbit of $A^T JA$ under the action of $\GLtdZ$ on $\GLtdR$ given by $T \mapsto M^T T M$, where $M \in \GLtdZ$ and $T \in \GLtdR$.} given any two lattices $\Lambda$ and $\Lambda'$, we have $\Theta_\Lambda = \Theta_{\Lambda'}$ if and only if $\Lambda' = S \Lambda$ for some $S \in \SptdR$ and $\Theta_\Lambda \cap \Theta_{\Lambda'} = \emptyset$ otherwise. In other words, $\Lambda$ and $\Lambda'$ are symplectically related \textit{if and only if} $\Theta_\Lambda \cap \Theta_{\Lambda'} \neq \emptyset$ (if and only if $\Theta_\Lambda = \Theta_{\Lambda'}$). 

Corollary \ref{corollary_main} can also be stated as follows:\ any symplectic transformation $S$ of the time-frequency plane $\Rtd$ lifts to a unitary operator $U$ on $L^2(\Rd)$ which, \textit{for any lattice} $\Lambda$, implements an equivalence between the Gabor structures over $\Lambda$ and $S\Lambda$. In Section \ref{sec_necessity}, we will prove that (modulo a minor complication related to complex conjugation) symplectic transformations are the only linear transformations of the time-frequency plane for which such operators exist (unitary or not). In essence, this means that—at the level of linear transformations of the time-frequency plane—the structure preserving transformations in Gabor analysis are precisely the symplectic ones.

\begin{remark}
\phantom{=}
\begin{itemize}
    \item In the $d=1$ case, we have $\mathrm{Sp}(2, \mathbb R) = \mathrm{SL}(2, \mathbb R)$, so two lattices in $\mathbb R^2$ are symplectically related if and only if they have the same covolume. For $d > 1$, $\SptdR$ is a proper subgroup of $\mathrm{SL}(2d, \mathbb R)$, so the story is more complicated.
    \item It is interesting to note that
    \begin{align*}
        (A^\circ)^T J A^\circ = (-JA^{-T})^T J (-JA^{-T}) = A^{-1} J A^{-T} = - (A^T J A)^{-1},
    \end{align*}
    so that—in the context of symplectic forms—the duality in Gabor analysis corresponds to the involution $\theta \mapsto -\theta^{-1}$.
\end{itemize}
\end{remark}

\begin{remark}
    Balan \cite{Balan} has defined a notion of unitary equivalence of frames in the context of general separable Hilbert spaces. He also defines a metric on each equivalence class, thus quantifying the distance between two equivalent frames. Our notion of equivalence (which pertains to lattices) is distinct from his in many respects. There is, however, the following connection. Consider any symplectic transformation $S$, any \related unitary operator $U$ on $L^2(\Rd)$ and an arbitrary lattice matrix $A \in \GLtdR$. The fact that $U$ and $S$ satisfy \eqref{metaplectic_op} implies that (\textit{if} we use symmetrized time-frequency shifts) the Gabor systems $\mathcal G(g, A)$ and $\mathcal G(Ug, SA)$ are $U$-equivalent in the sense of Balan whenever $g$ generates a frame over $A\Ztd$. Moreover, by Theorem 2.4 in \cite{Balan}, the distance between them is given by 
    \begin{align*}
        d\bl(\mathcal G(g, A), \mathcal G(Ug, SA) \br) = \log \bl( 1 + \max\bl( \|U - I\|, \|I - U^* \|\br) \br) = \log(1 + \| U-I \|).
    \end{align*}   
    This depends only on the operator $U$, so we actually have a function
    \begin{align*}
        \phi \from \MpctdR &\to [0, \infty) \\
        U &\mapsto \log(1 + \| U-I \|)
    \end{align*}
    with the property that 
    \begin{align*}
         d\bl(\mathcal G(g, A), \mathcal G(Ug, SA) \br) = \phi(U)
    \end{align*}
    for every $A \in \GLtdR$ and any $g \in L^2(\Rd)$ generating a frame over $A \Ztd$. Thus, the distance between Gabor frames related in this manner is independent of both the lattice and the atom.
\end{remark}

\subsection{Separable Lattices}
\label{sec_separable_lattices}

We call a lattice $\Lambda \subset \Rtd$ \textit{separable} if it is of the form
\begin{align*}
    \Lambda = \begin{pmatrix}
        A_1 & 0 \\ 0 & A_2
    \end{pmatrix} \Ztd \quad \text{for some } A_1, A_2 \in \GLdR.
\end{align*}
For a separable lattice, the translation and modulation parameters are separate, in the sense that $\pi(A (k_1, k_2)) = M_{A_2 k_2} T_{A_1 k_1}$ for all $k_1, k_2 \in \Zd$. This greatly simplifies the structure of Gabor systems, so there are many results that apply only to separable lattices (see e.g. Ron and Shen \cite{RonShen}). The following proposition characterizes the symplectic forms determined by lattices that are symplectically related to separable ones. In Section \ref{subsec_separable_geometric}, we will give a more symplecto-geometric characterization of such lattices.

\begin{prop}[The Symplectic Forms of Separable Lattices]
    \label{prop_separable_latties}
    Let $\Lambda \subset \Rtd$ be a lattice. Then, $\Lambda$ is symplectically related to a separable lattice if and only if there exists $K \in \GLdR$ such that $\left(\begin{smallmatrix} 0 & K \\ - K^T & 0 \end{smallmatrix}\right) \in \Theta_\Lambda$. In other words, the following conditions are equivalent.
    \begin{itemize}
        \item[(a)] There exists $A \in \GLtdR$ such that $\Lambda = A\Ztd$ and $A^T J A = \left(\begin{smallmatrix} 0 & K \\ - K^T & 0 \end{smallmatrix}\right)$ for some $K \in \GLdR$.
        \item[(b)] There exists $B \in \GLtdR$ of the form
        \begin{align*}
            B = \begin{pmatrix}
                B_1 & 0 \\ 0 & B_2
            \end{pmatrix} \quad \text{with } B_1, B_2 \in \GLdR
        \end{align*}
        and $S \in \SptdR$ such that $\Lambda = SB \Ztd$.
    \end{itemize}
\end{prop}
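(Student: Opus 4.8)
The plan is to show the two conditions are equivalent by exploiting the redundancy results already established—Lemma \ref{lemma_symplectic_conjugation} (the set of matrices determining a fixed symplectic form $\theta$ is $\SptdR A$) and Remark \ref{remark_redundancy_of_lattice_matrices} (the set of lattice matrices for $\Lambda$ is $A\GLtdZ$)—and to reduce everything to a single computation: a block matrix $B = \mathrm{diag}(B_1, B_2)$ with $B_1, B_2 \in \GLdR$ satisfies
\begin{align*}
    B^T J B = \begin{pmatrix} B_1^T & 0 \\ 0 & B_2^T \end{pmatrix} \begin{pmatrix} 0 & I \\ -I & 0 \end{pmatrix} \begin{pmatrix} B_1 & 0 \\ 0 & B_2 \end{pmatrix} = \begin{pmatrix} 0 & B_1^T B_2 \\ -B_2^T B_1 & 0 \end{pmatrix},
\end{align*}
which is of the form $\left(\begin{smallmatrix} 0 & K \\ -K^T & 0 \end{smallmatrix}\right)$ with $K = B_1^T B_2 \in \GLdR$.

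For the implication $(b) \Rightarrow (a)$: suppose $\Lambda = SB\Ztd$ with $S \in \SptdR$ and $B = \mathrm{diag}(B_1,B_2)$. Set $A \defeq SB$, so $\Lambda = A\Ztd$ and, since $S$ is symplectic, $A^T J A = B^T (S^T J S) B = B^T J B = \left(\begin{smallmatrix} 0 & B_1^T B_2 \\ -B_2^T B_1 & 0 \end{smallmatrix}\right)$, which is of the required form with $K = B_1^T B_2 \in \GLdR$. This direction is essentially immediate.

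For the implication $(a) \Rightarrow (b)$: we are given $A \in \GLtdR$ with $\Lambda = A\Ztd$ and $A^T J A = \left(\begin{smallmatrix} 0 & K \\ -K^T & 0 \end{smallmatrix}\right)$ for some $K \in \GLdR$. The idea is to produce an \emph{explicit} separable matrix $B$ determining the \emph{same} symplectic form $\theta \defeq \left(\begin{smallmatrix} 0 & K \\ -K^T & 0 \end{smallmatrix}\right)$; by the computation above, it suffices to choose $B_1, B_2 \in \GLdR$ with $B_1^T B_2 = K$—for instance $B_1 = I$, $B_2 = K$, giving $B = \mathrm{diag}(I, K) = \left(\begin{smallmatrix} I & 0 \\ 0 & K \end{smallmatrix}\right)$, which is clearly in $\GLtdR$. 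Then $A^T J A = B^T J B$, so by Lemma \ref{lemma_symplectic_conjugation} there exists $S \in \SptdR$ with $A = SB$, hence $\Lambda = A\Ztd = SB\Ztd$, which is exactly condition $(b)$. Finally, the reformulation ``$\Lambda$ is symplectically related to a separable lattice if and only if $\left(\begin{smallmatrix} 0 & K \\ -K^T & 0 \end{smallmatrix}\right) \in \Theta_\Lambda$ for some $K \in \GLdR$'' follows: condition $(b)$ says $\Lambda = SB\Ztd = S(B\Ztd)$ with $B\Ztd$ separable, i.e.\ $\Lambda$ is symplectically related to the separable lattice $B\Ztd$; and conversely, if $\Lambda = S\Lambda_0$ with $\Lambda_0$ separable, pick a block lattice matrix $B$ for $\Lambda_0$ and note $SB$ is a lattice matrix for $\Lambda$ with $(SB)^T J (SB) = B^T J B \in \Theta_\Lambda$ of the stated form—so $(a)$ holds. (Here I am using that $\Theta_\Lambda$ depends only on the symplectic-relatedness class, as noted after Equation \eqref{eq_set_of_symp_forms}.)

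I do not expect a genuine obstacle here; the content is entirely bookkeeping with the two redundancy lemmas, and the only mild subtlety is keeping straight that ``$\theta \in \Theta_\Lambda$'' is about some lattice matrix for $\Lambda$ (not all of them) and invoking Lemma \ref{lemma_symplectic_conjugation} in the right direction. The cleanest write-up is probably to prove $(a) \Leftrightarrow (b)$ directly via the one block computation and then append a short paragraph translating $(a)$ and $(b)$ into the ``symplectically related to a separable lattice'' / ``$\Theta_\Lambda$ contains a matrix of that form'' language.
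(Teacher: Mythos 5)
Your proof is correct and follows essentially the same route as the paper: the $(b)\Rightarrow(a)$ direction via the block computation $B^TJB = \left(\begin{smallmatrix} 0 & B_1^TB_2 \\ -B_2^TB_1 & 0 \end{smallmatrix}\right)$ with $A = SB$, and the $(a)\Rightarrow(b)$ direction by taking $B = \left(\begin{smallmatrix} I & 0 \\ 0 & K \end{smallmatrix}\right)$ and invoking Lemma \ref{lemma_symplectic_conjugation} to get $S = AB^{-1} \in \SptdR$. Your extra paragraph translating $(a)$ and $(b)$ into the $\Theta_\Lambda$ language is a harmless elaboration of what the paper leaves implicit.
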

\begin{proof}
    If $(b)$ holds, then $(a)$ holds with $A = SB$ and $K = B_1^T B_2$, since
    \begin{align*}
        (SB)^T J (SB) = B^T J B = \begin{pmatrix}
            0 & B_1^T B_2 \\ - B_2^T B_1 & 0
        \end{pmatrix}.
    \end{align*} 
    Suppose now that $(a)$ holds. The calculation
    \begin{align*}
        \begin{pmatrix}
            I & 0 \\ 0 & K
        \end{pmatrix}^T  \begin{pmatrix}
            0 & I \\ -I & 0
        \end{pmatrix} \begin{pmatrix}
            I & 0 \\ 0 & K
        \end{pmatrix} = \begin{pmatrix}
            0 & K \\ -K^T & 0
        \end{pmatrix}
    \end{align*}
    shows that $B \coloneqq \left(\begin{smallmatrix} I & 0 \\ 0 & K \end{smallmatrix}\right)$ determines the same symplectic form as $A$. By Lemma \ref{lemma_symplectic_conjugation}, $S \coloneqq AB^{-1}$ is symplectic, which gives $(b)$.
\end{proof}

In light of the proof, we obtain the following corollary.

\begin{corollary}
    \label{corollary_separable_lattices}
    Let $A \in \GLtdR$ and suppose that 
    \begin{align*}
        A^T J A = \begin{pmatrix}
            0 & K \\ -K^T & 0
        \end{pmatrix} \quad \text{for some } K \in \GLdR.
    \end{align*}
    Then, the lattice $A\Ztd$ is symplectically related to the separable lattice
    \begin{align*}
        \begin{pmatrix}
            I & 0 \\ 0 & K
        \end{pmatrix} \Ztd,
    \end{align*}
    so by Theorem \ref{theorem_main}, the structure of Gabor systems over $A \Ztd$ is unitarily equivalent to the structure of Gabor systems over $\left(\begin{smallmatrix} I & 0 \\ 0 & K \end{smallmatrix}\right) \Ztd$.
\end{corollary}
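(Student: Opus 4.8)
The plan is to obtain this essentially for free by re-reading the proof of Proposition \ref{prop_separable_latties} and then invoking Theorem \ref{theorem_main}. First I would record the elementary matrix identity
\begin{align*}
    \begin{pmatrix} I & 0 \\ 0 & K \end{pmatrix}^T \begin{pmatrix} 0 & I \\ -I & 0 \end{pmatrix} \begin{pmatrix} I & 0 \\ 0 & K \end{pmatrix} = \begin{pmatrix} 0 & K \\ -K^T & 0 \end{pmatrix},
\end{align*}
which shows that $B \defeq \left(\begin{smallmatrix} I & 0 \\ 0 & K \end{smallmatrix}\right)$ satisfies $B^T J B = \left(\begin{smallmatrix} 0 & K \\ -K^T & 0 \end{smallmatrix}\right) = A^T J A$. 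Since $K \in \GLdR$, the matrix $B$ is invertible, so $B \in \GLtdR$ and $B\Ztd$ is precisely the separable lattice in the statement.

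Next I would apply Lemma \ref{lemma_symplectic_conjugation} to the equality $A^T J A = B^T J B$: it gives that $S \defeq AB^{-1} \in \SptdR$, and hence $A\Ztd = S(B\Ztd)$. By Definition \ref{def_symplectically_related}, this is exactly the assertion that $A\Ztd$ is symplectically related to the separable lattice $\left(\begin{smallmatrix} I & 0 \\ 0 & K \end{smallmatrix}\right)\Ztd$.

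Finally, I would feed the pair $A, B$ (which satisfies the hypothesis $A^T J A = B^T J B$) into Theorem \ref{theorem_main}, together with any unitary operator $U$ on $L^2(\Rd)$ that is \related to $S = AB^{-1}$; such a $U$ exists by Proposition \ref{prop_metaplectic_ops}. Theorem \ref{theorem_main} then yields that $\mathcal G(g, A)$ is a Bessel sequence (resp.\ a frame) if and only if $\mathcal G(Ug, B)$ is, with equal optimal bounds, and that $S^B_{Ug, Uh} = U S^A_{g, h} U^*$ whenever $\mathcal G(g, A)$ and $\mathcal G(h, A)$ are Bessel. Reading this against Definition \ref{definition_main}, it is exactly the statement that the Gabor structures over $A\Ztd$ and over $\left(\begin{smallmatrix} I & 0 \\ 0 & K \end{smallmatrix}\right)\Ztd$ are unitarily equivalent. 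There is no real obstacle here: the corollary is a repackaging of the computation in the proof of Proposition \ref{prop_separable_latties} plus an application of Theorem \ref{theorem_main}, and the only point requiring the hypothesis $K \in \GLdR$ is the invertibility of $B$ (so that $B\Ztd$ is a genuine lattice and Lemma \ref{lemma_symplectic_conjugation} applies).
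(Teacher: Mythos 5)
Your proposal is correct and follows the paper's own route exactly: the paper proves this corollary by citing the computation in the proof of Proposition \ref{prop_separable_latties}, namely that $B \coloneqq \left(\begin{smallmatrix} I & 0 \\ 0 & K \end{smallmatrix}\right)$ satisfies $B^T J B = A^T J A$, so that $AB^{-1}$ is symplectic by Lemma \ref{lemma_symplectic_conjugation}, and then invokes Theorem \ref{theorem_main}. Nothing is missing.
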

\begin{proof}
    Immediate from the proof of Proposition \ref{prop_separable_latties}.
\end{proof}

\begin{remark}[Finding the Unitary Operator]
    \label{remark_finding_the_unitary}
    In the context of Corollary \ref{corollary_separable_lattices}, if we decompose $A$ into $d \times d$ blocks, so that
    \begin{align*}
        A = \begin{pmatrix}
            A_{11} & A_{12} \\ A_{21} & A_{22}
        \end{pmatrix}, \quad \text{then } S \coloneqq \begin{pmatrix}
            A_{11} & A_{12}K^{-1} \\ A_{21} & A_{22}K^{-1}
        \end{pmatrix}
    \end{align*}
    is the symplectic matrix satisfying  $S \left(\begin{smallmatrix} I & 0 \\ 0 & K \end{smallmatrix}\right) = A$. If $A_{12}$ is invertible, then Proposition \ref{prop_free_metaplectic_ops} gives us the explicit form of a unitary operator $U$ that implements the equivalence of Gabor structures over $A\Ztd$ and $\left(\begin{smallmatrix} I & 0 \\ 0 & K \end{smallmatrix}\right) \Ztd$.
\end{remark}

Taken together, Proposition \ref{prop_separable_latties} and Corollary \ref{corollary_separable_lattices} show that—in terms of equivalence classes of symplectically related lattices—the equivalence class of any separable lattice has a representative of the form $\left(\begin{smallmatrix} I & 0 \\ 0 & K \end{smallmatrix}\right) \Ztd$ with $K \in \GLdR$. Thus, the $d^2$ parameters of $\GLdR$ suffice to parameterize the space of separable lattices in $\Rtd$ when it comes to the possible structures of Gabor systems:\ we capture all possible structures of Gabor systems (up to unitary equivalence) by considering lattices of the form $\left(\begin{smallmatrix} I & 0 \\ 0 & K \end{smallmatrix}\right) \Ztd$. 

We can also explain the reduction from $\GLdR \times \GLdR$ (the space of all separable lattice matrices) to $\GLdR$ in the following manner. The separable lattice $\left(\begin{smallmatrix} A_1 & 0 \\ 0 & A_2 \end{smallmatrix}\right) \Ztd$ is symplectically related to the separable lattice
\begin{align*}
    \begin{pmatrix}
        L^{-1} A_1 & 0 \\ 0 & L^T A_2
    \end{pmatrix}\Ztd, \quad \text{for any } L \in \GLdR,
\end{align*}
because the matrix $\left(\begin{smallmatrix} L^{-1} & 0 \\ 0 & L^T \end{smallmatrix}\right)$ is symplectic (see the discussion prior to Proposition \ref{prop_generators}). In particular, taking $L = A_1$, every separable lattice is symplectically related to one of the form $\left(\begin{smallmatrix} I & 0 \\ 0 & K \end{smallmatrix}\right) \Ztd$ with $K \in \GLdR$. We also see that $K = A_1^TA_2$, so that $A_1^T A_2$ is the (only) quantity that is relevant to the structure of Gabor systems over $\left(\begin{smallmatrix} A_1 & 0 \\ 0 & A_2 \end{smallmatrix}\right) \Ztd$.

We are now ready to justify the first example from the introduction.

\begin{example}
    \label{example_main}
    Consider the $d=2$ case and the lattice matrix
    \begin{align*}
        A = \begin{pmatrix}
            1 & \pi^2 & \frac{\pi}3 & \pi
            \\[\xs em] \pi^2 & 1 & \pi & \frac{\pi}2 
            \\[\xs em] 0 & \pi & \frac13 & 1
            \\[\xs em] \pi & 0 & 1 & \frac12
        \end{pmatrix}.
    \end{align*}
    We find that
    \begin{align*}
        A^T J A = \begin{pmatrix}
            0 & 0 & \frac13 & 1
            \\[\xs em] 0 & 0 & 1 & \frac12 
            \\[\xs em] -\frac13 & -1 & 0 & 0
            \\[\xs em] -1 & -\frac12 & 0 & 0
        \end{pmatrix} = \begin{pmatrix}
            0 & K \\ -K^T & 0
        \end{pmatrix}, \quad \text{with } K \coloneqq \begin{pmatrix}
            \frac13 & 1 \\ 1 & \frac12
        \end{pmatrix}.
    \end{align*}
    By Corollary \ref{corollary_separable_lattices}, this means that $A \mathbb Z^4$ is symplectically related to the separable (and rational) lattice
    \begin{align*}
        B \mathbb Z^4 = \begin{pmatrix}
            1 & 0 & 0 & 0 
            \\[\xs em] 0 & 1 & 0 & 0 
            \\[\xs em] 0 & 0 & \frac13 & 1 
            \\[\xs em] 0 & 0 & 1 & \frac12
        \end{pmatrix} \mathbb Z^4,
    \end{align*}
    with the symplectic transformation $S$ for which $A\mathbb Z^4 = S(B\mathbb Z^4)$ given by 
    \begin{align*}
        S = AB^{-1} = \begin{pmatrix}
            1 & \pi^2 & \frac{\pi}3 & \pi
            \\[\xs em] \pi^2 & 1 & \pi & \frac{\pi}2 
            \\[\xs em] 0 & \pi & \frac13 & 1
            \\[\xs em] \pi & 0 & 1 & \frac12
        \end{pmatrix} \begin{pmatrix}
            1 & 0 & 0 & 0 
            \\[\xs em] 0 & 1 & 0 & 0 
            \\[\xs em] 0 & 0 & \frac13 & 1 
            \\[\xs em] 0 & 0 & 1 & \frac12
        \end{pmatrix}^{-1} = \begin{pmatrix}
            1 & \pi^2 & \pi & 0
            \\[\xs em] \pi^2 & 1 & 0 &\pi 
            \\[\xs em] 0 & \pi & 1 & 0 
            \\[\xs em] \pi & 0 & 0 & 1
        \end{pmatrix}.
    \end{align*}
    By Theorem \ref{theorem_main}, this means that the structure of Gabor systems over $A \mathbb Z^4$ is unitarily equivalent to the structure of Gabor systems over the separable and rational lattice $B \mathbb Z^4$. Moreover, since $\left(\begin{smallmatrix} \pi & 0 \\ 0 & \pi \end{smallmatrix}\right)$ is invertible, Proposition \ref{prop_free_metaplectic_ops} tells us that this equivalence is implemented by the unitary operator $U$ defined by
    \begin{align*}
        Uf(t) =  \frac{1}{\pi} \int_{\mathbb R^2} f(x) e^{i(t \cdot t - 2 x \cdot t + x \cdot Mx)} \,dx, \quad \text{where } M = \begin{pmatrix}
            1 & \pi^2 \\ \pi^2 & 1
        \end{pmatrix},
    \end{align*}
    for $f \in L^2(\mathbb R^2)$ and $t \in \mathbb R^2$.
\end{example}

\subsection{The Geometric Content of Symplectic Forms}
\label{sec_geometric_content}

We can write the standard symplectic form on $\Rtd$ as the differential two-form
\begin{align*}
    \sigma = \sum_{j=1}^d d\omega_j \wedge dx_j,
\end{align*}
where $(x_1, \ldots, x_d, \omega_1, \ldots, \omega_d)$ are the standard coordinates on $\Rtd$, as usual. This equality is easily verified by applying each side to the standard basis vectors. By the usual geometric interpretation of differential two-forms on a Euclidean space:\ given $z, w \in \Rtd$, the number $d \omega_j \wedge d x_j(z, w)$ is obtained by orthogonally projecting the parallelogram spanned by $z$ and $w$ onto the $(x_j, \omega_j)$-plane and taking the area of this projection (with an orientation-dependent sign). Thus, $\sigma(z, w)$ is obtained by taking the parallelogram spanned by $z$ and $w$, projecting this onto each \textit{canonical plane} $(x_j, \omega_j)$ and summing up the areas of these projections (with signs depending on orientations).

Suppose now that we are given a lattice $A \Ztd$. Let us denote the columns of $A$ by $A_1, \ldots, A_{2d}$. The $(i, j)$-th entry of the symplectic form $A^T J A$ is precisely
\begin{align*}
    \sigma(A_j, A_i) = (A_i)^T J A_j = (A^T J A)_{ij}.
\end{align*}
Thus, by Theorem \ref{theorem_main}, the structure of Gabor systems over $A \Ztd$ is uniquely determined by the projections of the parallelograms spanned by $A_i$ and $A_j$ (as $i$ and $j$ range from $1$ to $2d$) onto the canonical planes $(x_j, \omega_j)$ (as $j$ ranges from $1$ to $d$). 

This observation ties in neatly with Bourouihiya's higher-dimensional version \cite[Corollary 4.3]{Bourouihiya} of the famous result of Lyubarskii, Seip and Wallstén \cite{Lyubarskii, WallsténSeip} on Gaussian Gabor frames. Note that combining this result with Corollary \ref{corollary_frame_sets_Gaussians} gives us frames generated by any given Gaussian.

\begin{theorem}[Bourouihiya]
    \label{Gaussian_Gabor_frames}
    Let $\alpha_j, \beta_j \in \mathbb R \backslash \{0\}$ for $1 \leq j \leq d$. Then, the standard Gaussian $g_0(t) = 2^{d/4} e^{-\pi |t|^2}$ generates a Gabor frame over the lattice
    \begin{align*}
        \Lambda = \biggl( \prod_{j=1}^d \alpha_j \mathbb Z  \biggr) \times \biggl( \prod_{j=1}^d \beta_j \mathbb Z  \biggr)
    \end{align*}
    if and only if $|\alpha_j \beta_j| < 1$ for $1 \leq j \leq d$.
\end{theorem}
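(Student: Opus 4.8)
The plan is to reduce the $d$-dimensional statement to $d$ copies of the classical one-dimensional Lyubarskii--Seip--Wallst\'en theorem by exploiting the tensor-product structure of both the atom and the lattice. Observe first that the standard Gaussian on $\Rd$ factors as $g_0 = g_0^{(1)} \otimes \cdots \otimes g_0^{(1)}$, where $g_0^{(1)}(s) = 2^{1/4} e^{-\pi s^2}$ is the standard Gaussian on $\mathbb R$, under the canonical identification $L^2(\Rd) \cong L^2(\mathbb R)^{\otimes d}$. Likewise, writing a point of the time-frequency plane as $z = (x, \omega)$ with $x, \omega \in \Rd$, the lattice $\Lambda$ consists precisely of those $z$ for which $(x_j, \omega_j) \in \alpha_j \mathbb Z \times \beta_j \mathbb Z$ for each $j$; since $\alpha_j \mathbb Z = |\alpha_j|\mathbb Z$, we may assume $\alpha_j, \beta_j > 0$ throughout. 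Under the identification above one checks directly that $\pi(z) = \pi(x_1, \omega_1) \otimes \cdots \otimes \pi(x_d, \omega_d)$, whence $\pi(z) g_0 = \bigotimes_{j=1}^d \pi(x_j, \omega_j) g_0^{(1)}$.

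First I would establish the factorization of the frame operator,
\[
    S^{\Lambda}_{g_0, g_0} = \bigotimes_{j=1}^d S^{\alpha_j \mathbb Z \times \beta_j \mathbb Z}_{g_0^{(1)}, g_0^{(1)}}.
\]
Since $g_0 \in \Falg$ (and $g_0^{(1)} \in M^1(\mathbb R)$), all Gabor systems in sight are Bessel sequences by Lemma~\ref{lemma_properties_of_Feichtinger}(i), so every frame operator here is bounded and the defining series converge absolutely. It therefore suffices to verify the identity on elementary tensors $f = f_1 \otimes \cdots \otimes f_d$ (a total set), where it is an immediate consequence of the previous paragraph together with the fact that the index set $\Lambda$ is itself a product $\prod_j (\alpha_j \mathbb Z \times \beta_j \mathbb Z)$: the absolutely convergent multiple sum rearranges into the product of the one-dimensional sums. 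Both sides being bounded and agreeing on a total set, they coincide.

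The crux is then a spectral fact about tensor products of positive operators. Each $T_j \coloneqq S^{\alpha_j \mathbb Z \times \beta_j \mathbb Z}_{g_0^{(1)}, g_0^{(1)}}$ is a bounded, positive, self-adjoint operator on $L^2(\mathbb R)$, and the spectrum of $\bigotimes_j T_j$ is the closure of the set of products $\prod_j \lambda_j$ with $\lambda_j \in \operatorname{spec}(T_j)$; in particular $\inf \operatorname{spec}\bigl(\bigotimes_j T_j\bigr) = \prod_j \inf \operatorname{spec}(T_j)$ and $\sup \operatorname{spec}\bigl(\bigotimes_j T_j\bigr) = \prod_j \sup \operatorname{spec}(T_j)$ when all $T_j \geq 0$. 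Since a Bessel sequence $\mathcal G(g, A)$ is a frame precisely when its frame operator is bounded below (equivalently invertible), with optimal frame bounds $\inf \operatorname{spec}(S^A_{g,g})$ and $\sup \operatorname{spec}(S^A_{g,g})$ (see the discussion following Definition~\ref{definition_main}), it follows that $\mathcal G(g_0, \Lambda)$ is a frame if and only if $\mathcal G(g_0^{(1)}, \alpha_j \mathbb Z \times \beta_j \mathbb Z)$ is a frame for $L^2(\mathbb R)$ for every $j$ (with the $d$-dimensional optimal frame bounds equal to the products of the one-dimensional ones). Finally, the classical Lyubarskii--Seip--Wallst\'en theorem \cite{Lyubarskii, WallsténSeip} asserts that $\mathcal G(g_0^{(1)}, \alpha_j \mathbb Z \times \beta_j \mathbb Z)$ is a frame if and only if $\alpha_j \beta_j < 1$, which yields the claim.

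I expect the main obstacle to be the two ``soft analysis'' steps: justifying the rearrangement in the factorization of the frame operator (handled by the absolute convergence afforded by $g_0 \in \Falg$) and, more substantively, pinning down the spectrum of an infinite-dimensional Hilbert-space tensor product of positive operators, which, though standard, must be stated with care since it is exactly what converts ``each factor is a frame'' into ``the product is a frame'' in \emph{both} directions. Everything else---the tensor factorizations of $g_0$, of $\pi(z)$ and of $\Lambda$, and the reduction to positive $\alpha_j, \beta_j$---is routine. I also note that combining this statement with Corollary~\ref{corollary_frame_sets_Gaussians} immediately gives the analogue for an arbitrary Gaussian $g_{X,Y}$ over the symplectically transported lattice, and that Theorem~\ref{theorem_main} then extends it to every lattice whose set of symplectic forms contains one of the form $\left(\begin{smallmatrix} 0 & D \\ -D & 0 \end{smallmatrix}\right)$ with $D$ diagonal.
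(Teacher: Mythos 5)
The paper does not prove this statement: it is quoted as Bourouihiya's result \cite[Corollary~4.3]{Bourouihiya}, and the cited paper (titled ``The Tensor Product of Frames'') proves it by exactly the tensor-product reduction you propose, so your route is essentially the standard one rather than a genuinely different alternative. Your argument is correct in substance; the only soft spot is the claim that the Bessel property makes ``the defining series converge absolutely''---for general $f \in L^2(\Rd)$ it only gives unconditional convergence, so you should either verify the factorization of $S^{\Lambda}_{g_0,g_0}$ on elementary tensors whose factors lie in a dense subspace such as $M^1(\mathbb R)$ (where $(\langle f_j, \pi(\lambda)g_0^{(1)}\rangle)_\lambda \in \ell^1$ and the rearrangement is legitimate), or, more cleanly, observe that the analysis and synthesis operators themselves factor as tensor products of the one-dimensional ones, whence $S^{\Lambda}_{g_0,g_0} = \bigotimes_j S^{\alpha_j\mathbb Z\times\beta_j\mathbb Z}_{g_0^{(1)},g_0^{(1)}}$ with no convergence issues. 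The spectral ingredient (for bounded positive $T_j$, the infimum and supremum of the spectrum of $\bigotimes_j T_j$ are the products of those of the $T_j$) is stated correctly and does carry both implications, so the reduction to the one-dimensional Lyubarskii--Seip--Wallst\'en theorem is complete.
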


\noindent Now, we can choose the $\alpha$'s and $\beta$'s in such a manner that the covolume of $\Lambda$ is arbitrarily small while keeping e.g.\ $\alpha_1 \beta_1 > 1$, so this result shows that \textit{covolume is not everything} in multivariate Gabor analysis (a similar remark has been made by Heil \cite{Heil2008}). However, the products $\alpha_j \beta_j$ are precisely the entries of the symplectic form determined by the obvious lattice matrix for $\Lambda$. In the $d = 2$ case, for example:
\begin{align*}
    A = \begin{pmatrix}
        \alpha_1 & 0 & 0 & 0 \\ 0 & \alpha_2 & 0 & 0 \\ 0 & 0 & \beta_1 & 0 \\ 0 & 0 & 0 & \beta_2
    \end{pmatrix} \; \implies \; A^T J A = \begin{pmatrix}
        0 & 0 & \alpha_1 \beta_1 & 0 \\ 0 & 0 & 0 & \alpha_2 \beta_2 \\ - \alpha_1 \beta_1 & 0 & 0 & 0 \\ 0 & -\alpha_2\beta_2 & 0 & 0
    \end{pmatrix}.
\end{align*}
Thus, the condition in Theorem \ref{Gaussian_Gabor_frames} is a condition on the entries of the symplectic form $A^T J A$ associated to $A\Ztd$. We will generalize this result in the upcoming section (see Theorem \ref{Gaussian_Gabor_frames_v2}). 

Finally, we note how the covolume of an arbitrary lattice $\Lambda = A \Ztd$ relates to the entries of $A^T J A$. Of course, we always have that $|A|^2 = \det (A^T J A)$, just by the properties of the determinant and the fact that $\det J = 1$, but there is a significantly more subtle relationship. It has long been known that the determinant of any antisymmetric matrix $\theta$ can be written as the square of a polynomial in its entries \cite{Ledermann}. This polynomial (as well as its value) is known as the \textit{Pfaffian} of $\theta$ and is denoted by $\pf \theta$. In the $d = 2$ case, for example, we have
\begin{align*}
    \pf \begin{pmatrix}
        0 & a & b & c \\ -a & 0 & d & e \\ -b & -d & 0 & f \\ -c & -e & -f & 0 
    \end{pmatrix} = a f - b e + c d.
\end{align*}
The Pfaffian has the property that for any $A \in \MtdR$ (and any antisymmetric $\theta \in \MtdR$), we have $\pf (A^T \theta A) = \det(A) \pf(\theta)$. Taking $J = \theta$, we therefore get the relation $|A| = \pf (A^TJA)$. Thus, the covolume of the lattice $A\Ztd$ is a very natural and well-studied invariant of the antisymmetric matrix $A^T J A$, and in fact a polynomial in its entries.

\section{A Result on Multivariate Gaussian Gabor Frames}
\label{sec_Gaussians}

In this section, we use our framework to generalize Theorem \ref{Gaussian_Gabor_frames}, the standard higher-dimensional version of the Lyubarskii-Seip-Wallstén theorem on Gaussian Gabor frames. The result is Theorem \ref{Gaussian_Gabor_frames_v2}.

Recall that an arbitrary Gaussian takes the form
\begin{align*}
    g_{X, Y}(t) = U_{V_Y} U_{M_{X^{1/2}}} g_0(t) = C e^{-\pi t^T(X + i Y)t},
\end{align*}
where $C$ is a constant typically chosen so that $g_{X, Y}$ is $L^2$-normalized, $X$ and $Y$ are real and symmetric, $X$ is positive definite, and $U_{V_Y}$ and $U_{M_{X^{1/2}}}$ are given by \eqref{def_of_U_V_and_U_M}. The choice of constant $C \neq 0$ does not matter when it comes to the question of whether $g_{X, Y}$ generates a Gabor frame, so we will leave it arbitrary in this section.

We begin by noting that symplectic covariance gives us some freedom of choice for the Gaussian in Theorem \ref{Gaussian_Gabor_frames}. We will incorporate this freedom in Theorem \ref{Gaussian_Gabor_frames_v2}, where it essentially counteracts the arbitrary choice we make when using $\Zd \times K \Zd$ as a representative for the equivalence class of lattices determined by the symplectic form $\left(\begin{smallmatrix} 0 & K \\ -K & 0 \end{smallmatrix}\right).$

\begin{remark}
    Let $D \in \GLdR$ be diagonal. Then, the Gaussian
    \begin{align*}
        g_{D^2, 0}(t) = C e^{-\pi t^T D^2 t} \quad (C \neq 0)
    \end{align*}
    generates a Gabor frame over the lattice
    \begin{align*}
        \Lambda = \biggl( \prod_{j=1}^d \alpha_j \mathbb Z  \biggr) \times \biggl( \prod_{j=1}^d \beta_j \mathbb Z  \biggr) \quad (\alpha_j, \beta_j \neq 0)
    \end{align*}
    if and only if $|\alpha_j \beta_j| < 1$ for $1 \leq j \leq d$.
\end{remark}
\begin{proof}
    Write $D = \mathrm{diag}(\gamma_1, \ldots, \gamma_d)$ and consider the symplectic matrix
    \begin{align*}
        M_{D^{-1}} = \begin{pmatrix}
            D & 0 \\ 0 & D^{-1}
        \end{pmatrix} = \mathrm{diag}(\gamma_1, \ldots, \gamma_d, \gamma_1^{-1}, \ldots, \gamma_d^{-1}).
    \end{align*}
    By Theorem \ref{Gaussian_Gabor_frames}, the standard Gaussian, $g_0$, generates a Gabor frame over the lattice
    \begin{align*}
        M_{D^{-1}} \Lambda = \biggl( \prod_{j=1}^d \gamma_j \alpha_j \mathbb Z  \biggr) \times \biggl( \prod_{j=1}^d \gamma_j^{-1} \beta_j \mathbb Z  \biggr)
    \end{align*}
    if and only if $|\alpha_j \beta_j| = |(\gamma_j \alpha_j)(\gamma_j^{-1} \beta_j)| < 1$ for all $j$. By Corollary \ref{corollary_main}, this is equivalent to the Gaussian $g_{D^2, 0} = U_{M_D}g_0$ generating a Gabor frame over the lattice $\Lambda = M_D M_{D^{-1}} \Lambda$.
\end{proof}

Now for the main result. Note that it reduces to the preceding remark in the special case of $A = \mathrm{diag}(\alpha_1, \ldots, \alpha_d, \beta_1, \ldots, \beta_d)$. The proof relies on the \textit{pre-Iwasawa factorization} of a symplectic matrix, which is stated and proved in Appendix \ref{sec_pre_Iwasawa}.

\begin{theorem}[Gaussian Gabor Frames]
    \label{Gaussian_Gabor_frames_v2}
    Suppose $A \in \GLtdR$ is such that 
    \begin{align*}
        A^T J A = \begin{pmatrix}
            0 & K \\ -K & 0
        \end{pmatrix} \quad \text{with } K \text{ diagonal.}
    \end{align*} 
    Write $A = \left(\begin{smallmatrix} A_{11} & A_{12} \\ A_{21} & A_{22} \end{smallmatrix}\right)$ where the $A_{ij}$'s are $d \times d$ blocks. Choose any diagonal matrix $D \in \GLdR$ and let
    \begin{align*}
        X &= (A_{11} D^2 A_{11}^T + A_{12} (DK)^{-2} A_{12}^T)^{-1} \\ \text{and} \quad Y &= -(A_{21} D^2 A_{11}^T + A_{22} (DK)^{-2} A_{12}^T) X
    \end{align*}
    ($X$ is always well-defined). Then, the Gaussian
    \begin{align*}
        g_{X, Y}(t) = C e^{-\pi t^T(X + i Y)t} \quad (C \neq 0)
    \end{align*}
    generates a Gabor frame over $A \Ztd$ if and only if every entry of $K$ has absolute value $< 1$.
\end{theorem}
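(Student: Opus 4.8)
The plan is to reduce, via Theorem \ref{theorem_main}, to the diagonal lattice case already handled by Theorem \ref{Gaussian_Gabor_frames}, and then to use the pre-Iwasawa factorization to pin down exactly which Gaussian the relevant metaplectic operator produces when applied to $g_0$. Since multiplying an atom by a nonzero scalar has no effect on whether it generates a frame, it is enough to exhibit a unitary $U$ that is \related to the symplectic matrix relating $A\Ztd$ to a suitable diagonal lattice, and to check that $U g_0$ is a nonzero multiple of the Gaussian $g_{X,Y}$ in the statement.

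\textbf{Reduction to the diagonal case.} For the chosen diagonal $D \in \GLdR$, put $E \coloneqq \left(\begin{smallmatrix} D^{-1} & 0 \\ 0 & DK \end{smallmatrix}\right) \in \GLtdR$. Because $D$ and $K$ are diagonal (hence commute and are symmetric), a one-line computation gives $E^T J E = \left(\begin{smallmatrix} 0 & K \\ -K & 0 \end{smallmatrix}\right) = A^T J A$. Moreover $E\Ztd = \bigl(\prod_j \alpha_j \mathbb Z\bigr)\times\bigl(\prod_j \beta_j \mathbb Z\bigr)$ with $\alpha_j = (D^{-1})_{jj}$, $\beta_j = (DK)_{jj}$, both nonzero and with product $\alpha_j\beta_j = K_{jj}$; so by Theorem \ref{Gaussian_Gabor_frames}, $g_0$ generates a Gabor frame over $E\Ztd$ if and only if $|K_{jj}| < 1$ for every $j$. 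Since $A^T J A = E^T J E$, Lemma \ref{lemma_symplectic_conjugation} shows $S \coloneqq A E^{-1} \in \SptdR$. Apply the pre-Iwasawa factorization (Appendix \ref{sec_pre_Iwasawa}) to write $S = V_P M_L R$ with $P = P^T \in \MdR$, $L \in \GLdR$, and $R$ a symplectic rotation (i.e.\ $R \in \SptdR$, $RR^T = I$). By Proposition \ref{prop_generators}, the composition rule recorded after Definition \ref{rho_related}, and Proposition \ref{prop_metaplectic_ops}, the unitary $U \coloneqq U_{V_P} U_{M_L} U_R$ (where $U_R$ is any unitary \related to $R$) is \related to $S$. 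Then Theorem \ref{theorem_main} gives: $\mathcal G(U g_0, A)$ is a frame $\iff$ $\mathcal G(g_0, E)$ is a frame $\iff$ every $K_{jj}$ has modulus $<1$.

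\textbf{Identifying $U g_0$.} The metaplectic operator attached to a symplectic rotation fixes $g_0$ up to a phase factor (classical: under the Bargmann transform $g_0$ is the Fock vacuum, on which $U(d) \subset \SptdR$ acts through a character — see Folland \cite{Folland}), so $U_R g_0$ is a nonzero multiple of $g_0$. Next, $U_{M_L} g_0(t) = \sqrt{|\det L|}\,g_0(Lt)$ is a nonzero multiple of $e^{-\pi t^T (L^T L) t}$, and $U_{V_P}$ multiplies by $e^{-\pi i t^T P t}$; hence $U g_0 = C' e^{-\pi t^T (L^T L + iP)t} = C' g_{L^T L,\, P}$ for some $C' \ne 0$. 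It remains to identify $L^T L = X$ and $P = Y$. From $S = V_P M_L R$ and $RR^T = I$, and writing $G \coloneqq L^T L$, we get
\begin{align*}
    S S^T = V_P M_L M_L^T V_P^T = \begin{pmatrix} I & 0 \\ -P & I \end{pmatrix}\begin{pmatrix} G^{-1} & 0 \\ 0 & G \end{pmatrix}\begin{pmatrix} I & -P \\ 0 & I \end{pmatrix} = \begin{pmatrix} G^{-1} & -G^{-1}P \\ -PG^{-1} & PG^{-1}P + G \end{pmatrix},
\end{align*}
so $G = \bigl((SS^T)_{11}\bigr)^{-1}$ and $P = -(SS^T)_{21}\bigl((SS^T)_{11}\bigr)^{-1}$. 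In particular $(SS^T)_{11}$ is a principal block of the positive definite matrix $SS^T$, hence invertible — which is precisely why $X$ is always well-defined. Finally, writing $S = A E^{-1} = \left(\begin{smallmatrix} A_{11}D & A_{12}(DK)^{-1} \\ A_{21}D & A_{22}(DK)^{-1} \end{smallmatrix}\right)$ and using that $D$ and $DK$ are symmetric, a block computation yields $(SS^T)_{11} = A_{11}D^2 A_{11}^T + A_{12}(DK)^{-2}A_{12}^T = X^{-1}$ and $(SS^T)_{21} = A_{21}D^2 A_{11}^T + A_{22}(DK)^{-2}A_{12}^T$, whence $L^T L = G = X$ and $P = -(SS^T)_{21}X = Y$. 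Therefore $U g_0 = C' g_{X,Y}$ with $C' \ne 0$, and combining with the previous step, $g_{X,Y}$ generates a Gabor frame over $A\Ztd$ if and only if $|K_{jj}| < 1$ for every $j$.

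\textbf{Main obstacle.} The conceptual heart is the identification of $U g_0$ in the third step, and this is exactly where the pre-Iwasawa factorization earns its keep: it peels off the ``rotation'' part of $S$, which is the part that leaves $g_0$ alone (up to a phase), so that only $V_P$ and $M_L$ remain — and these act on a Gaussian in the two most transparent ways possible, by chirp multiplication and by linear rescaling. Everything else is routine: the verification $E^T J E = A^T J A$, recovering $G$ and $P$ from the blocks of $SS^T$, and the block multiplication for $S = AE^{-1}$, together with the quoted results (Theorem \ref{Gaussian_Gabor_frames}, Lemma \ref{lemma_symplectic_conjugation}, Theorem \ref{theorem_main}).
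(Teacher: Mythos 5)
Your proof is correct and follows essentially the same route as the paper: factor $S = AE^{-1}$ (the paper's $S$ is the same matrix), apply the pre-Iwasawa factorization, use that the rotation factor fixes $g_0$ up to a phase, and read off $X$ and $Y$. The only substantive difference is that where you cite the Fock-vacuum/character fact for $U_R g_0 \propto g_0$, the paper gives a short self-contained argument via the invariance of $\langle g_0, \rho(z) g_0\rangle$ under orthogonal symplectic $O$ and injectivity of the Fourier transform; and you rederive the formulas for $X$ and $Y$ from $SS^T$ rather than quoting them from Proposition \ref{prop_pre_Iwasawa} — both choices are fine.
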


\noindent Note that $A^T JA$ is antisymmetric for any $A \in \GLtdR$, so that if the upper half of $A^TJA$ takes the form $\begin{pmatrix}
    0 & K
\end{pmatrix}$ with $K$ diagonal, then $A$ satisfies the condition of Theorem \ref{Gaussian_Gabor_frames_v2}.

\begin{remark}[Symplectic Lattices]
    Gröchenig \cite{Grochenig_book} defines a \textit{symplectic lattice} to be any lattice of the form $\alpha S \Ztd$ with $\alpha \neq 0$ and $S \in \SptdR$. All such lattices satisfy the condition of Theorem \ref{Gaussian_Gabor_frames_v2} with $K = \alpha^2 I$.
\end{remark}

Before we prove Theorem \ref{Gaussian_Gabor_frames_v2}, we illustrate it with a couple of examples.

\begin{example}
    \label{Gaussian_example}
    Consider the lattice matrix 
    \begin{align*}
        A = \begin{pmatrix}
            3 & 0 & \frac12 & 0
            \\[\xs em] 0 & 1 & 0 & 1 
            \\[\xs em] 14 & 1 & \frac52 & 1
            \\[\xs em] 3 & \frac{14}3 & \frac12 & 5
        \end{pmatrix}.
    \end{align*}
    We find that
    \begin{align*}
        A^T J A = \begin{pmatrix}
            0 & 0 & \frac12 & 0
            \\[\xs em] 0 & 0 &  0 & \frac13 
            \\[\xs em] -\frac12 & 0 & 0 & 0
            \\[\xs em] 0 & -\frac13 & 0 & 0
        \end{pmatrix} = \begin{pmatrix}
            0 & K \\ -K & 0
        \end{pmatrix}, \quad \text{with } K \coloneqq \begin{pmatrix}
            \frac12 & 0 \\ 0 & \frac13
        \end{pmatrix}.
    \end{align*}
    Theorem \ref{Gaussian_Gabor_frames_v2} now gives us a two-parameter family of Gaussians generating Gabor frames over $A \mathbb Z^4$. Indeed, let $D= \left( \begin{smallmatrix} a & 0 \\ 0 & b \end{smallmatrix}\right)$ with $a, b \in \mathbb R$ nonzero. With the notation of Theorem \ref{Gaussian_Gabor_frames_v2}, we find that
    \begin{align*}
        X = \begin{pmatrix}
            \frac{a^2}{9a^4+1} & 0 
            \\[\xs em] 0 & \frac{b^2}{b^4 + 9}
        \end{pmatrix} \quad \text{and} \quad Y = - \begin{pmatrix}
            \frac{42a^4 + 5}{9a^4+1} & 1 
            \\[\xs em] 1 & \frac{14 b^4 + 135}{3(b^4 + 9)}.
        \end{pmatrix}
    \end{align*}
    Thus, for all (nonzero) $a$ and $b$,
    \begin{align*}
        g(t) = \exp \left( - \pi t^T \begin{pmatrix}
            \frac{a^2 - i (42a^4 + 5) }{9a^4+1} & -i 
            \\[\xs em] -i & \frac{3b^2 - i (14 b^4 + 135) }{3(b^4 + 9)}
        \end{pmatrix} t \right)
    \end{align*}
    generates a Gabor frame over $A\mathbb Z^4$.
\end{example}

\begin{example}
    \label{Gaussian_example_2}
    Consider the lattices $A\mathbb Z^4$ and $B\mathbb Z^4$, where 
    \begin{align*}
        A = \begin{pmatrix}
            -\frac{14}{3} & -\frac{4}{21} & \frac{1}{25} & \frac{2}{5} 
            \\[\xs em] \frac{16}{3} & \frac{1}{21} & -\frac{2}{25} & \frac{1}{5} 
            \\[\xs em] -\frac{14}{3} & -\frac{11}{24} & \frac{1}{100} & \frac{19}{40} 
            \\[\xs em] \frac{91}{12} & -\frac{1}{6} & -\frac{13}{200} & \frac{7}{20}
        \end{pmatrix} \quad \text{and} \quad B = \begin{pmatrix}
            2\pi & -\frac{17}{9} & -\frac{2}{65} & \frac{3\sqrt{2}}{13} 
            \\[\xs em] \frac{\pi}{3} & -2 & \frac{3}{65} & \frac{2\sqrt{2}}{13}
            \\[\xs em] \frac{63\pi}{8} & -\frac{47}{12} & -\frac{57}{520} & \frac{33}{26\sqrt{2}} 
            \\[\xs em] -\frac{11\pi}{4} & -\frac{37}{8} & \frac{3}{20} & \frac{3}{4\sqrt{2}}
        \end{pmatrix}.
    \end{align*}
    We find that
    \begin{align*}
        A^T J A = \begin{pmatrix}
            0 & 0 & \frac25 & 0
            \\[\xs em] 0 & 0 &  0 & \frac17 
            \\[\xs em] -\frac25 & 0 & 0 & 0
            \\[\xs em] 0 & -\frac17 & 0 & 0
        \end{pmatrix} \quad \text{and}  \quad B^T J B = \begin{pmatrix}
            0 & 0 & \frac{\pi}5 & 0
            \\[\xs em] 0 & 0 &  0 & -\frac{\sqrt{2}}{3} 
            \\[\xs em] -\frac{\pi}5 & 0 & 0 & 0
            \\[\xs em] 0 & \frac{\sqrt{2}}{3} & 0 & 0
        \end{pmatrix}.
    \end{align*}
    Since $\tfrac{2}{5}, \tfrac{1}{7}, \tfrac{\pi}{5}, - \tfrac{\sqrt{2}}{3} \in (-1, 1)$, Theorem \ref{Gaussian_Gabor_frames_v2} gives us a two-parameter family of Gaussian Gabor frames for each of these lattices. 
\end{example}

\begin{proof}[Proof of Theorem \ref{Gaussian_Gabor_frames_v2}]
    Since 
    \begin{align*}
        A^T J A = \begin{pmatrix}
            0 & K \\ -K & 0
        \end{pmatrix} = \begin{pmatrix}
            I & 0 \\ 0 & K
        \end{pmatrix}^T J \begin{pmatrix}
            I & 0 \\ 0 & K
        \end{pmatrix},
    \end{align*}
    the matrix $A \left(\begin{smallmatrix} I & 0 \\ 0 & K \end{smallmatrix}\right)^{-1}$ is symplectic, and so
    \begin{align*}
        S \coloneqq A \begin{pmatrix}
            I & 0 \\ 0 & K
        \end{pmatrix}^{-1} M_{D^{-1}} = \begin{pmatrix}
            A_{11} D & A_{12} (DK)^{-1} \\ A_{21} D & A_{22} (DK)^{-1}
        \end{pmatrix}
    \end{align*}
    is symplectic as well. Applying the pre-Iwasawa factorization (Proposition \ref{prop_pre_Iwasawa}) to $S$, we have the following factorization of $S$ into symplectic matrices:
    \begin{align}
        \label{pre_Iwasawa}
        S = \begin{pmatrix}
            I & 0 \\ -Y & I
        \end{pmatrix} \begin{pmatrix}
            X^{-1/2} & 0 \\ 0 & X^{1/2}
        \end{pmatrix} \begin{pmatrix}
            P & Q \\ -Q & P
        \end{pmatrix} = V_Y M_{X^{1/2}} O,
    \end{align}
    where $O \coloneqq \left(\begin{smallmatrix} P & Q \\ -Q & P \end{smallmatrix}\right)$,
    \begin{align*}
        X &= (A_{11} D^2 A_{11}^T + A_{12}(DK)^{-2} A_{12}^T)^{-1},
        \\ Y &= - (A_{21}D^2A_{11}^T + A_{22}(DK)^{-2} A_{12}^T)X,
        \\ P &= X^{1/2} A_{11} D \quad \text{and} \quad Q = X^{1/2} A_{12} (DK)^{-1}.
    \end{align*}
    For the well-definedness of $X$, see the first paragraph of the proof of Proposition \ref{prop_pre_Iwasawa}.
    
    We now claim that every unitary operator $U_O$ that is $\rho$-related to $O$ fixes the standard Gaussian, up to a phase factor $\alpha$, i.e.\ $U_O g_0 = \alpha g_0$ with $|\alpha| = 1$. Assuming this for the time being, we can choose $U_O$ so that $U_O g_0 = g_0$. We know that $U_{V_Y} U_{M_{X^{1/2}}} U_O$ is $\rho$-related to $S$, so by Corollary \ref{corollary_main}, the Gaussian 
    \begin{align*}
        g_{X, Y} = U_{V_Y} U_{M_{X^{1/2}}} g_0 = U_{V_Y} U_{M_{X^{1/2}}} U_O g_0
    \end{align*}
    generates a Gabor frame over the lattice
    \begin{align*}
        S(D^{-1} \Zd \times DK\Zd) = SM_D\begin{pmatrix}
            I & 0 \\ 0 & K
        \end{pmatrix} (\Zd \times \Zd) = A \Ztd
    \end{align*}
    if and only if $g_0$ generates a Gabor frame over $D^{-1} \Zd \times (DK) \Zd$, which happens if and only if every entry of $K = D^{-1}(DK)$ has absolute value $< 1$ (by Theorem \ref{Gaussian_Gabor_frames}).

    All that remains is to justify that $U_O g_0 = \alpha g_0$ whenever $U_O$ is $\rho$-related to $O$. We will follow an argument of de Gosson \cite[Proposition~252]{deGosson_book}. It is well-known (and follows from a straightforward calculation) that
    \begin{align*}
        \langle g_0, \rho(z) g_0 \rangle = e^{\pi i x \cdot \omega} V_{g_0} g_0(z) = e^{- \frac \pi 2 z^T z} \quad \text{for all } z = (x, \omega) \in \Rtd.
    \end{align*}
    Since $O$ is symplectic, $O^TJO = J$, and $J^{-1} = -J$, the inverse of $O$ is given by
    \begin{align*}
        -JO^T J = \begin{pmatrix}
            P^T & -Q^T \\ Q^T & P^T
        \end{pmatrix} = O^T,
    \end{align*}
    so $O$ is orthogonal. Using the fact that $U_O$ is \related to $O,$ and then the expression for $\langle g_0, \rho(z) g_0 \rangle$ above, we now find that
    \begin{align*}
        \langle U_O g_0, \rho(z) U_O g_0 \rangle = \langle g_0, \rho(O^{-1}z) g_0 \rangle = \langle g_0, \rho(z) g_0 \rangle.
    \end{align*}
    Evaluating this at $z = (x, \omega)$, we get
    \begin{align*}
        \int_{\Rd} U_O g_0(t) \overline{T_x U_Og_0(t)} e^{-2\pi i \omega \cdot t} \, dt  = \int_{\Rd} g_0(t) \overline{T_x g_0(t)} e^{-2\pi i \omega \cdot t} \, dt.
    \end{align*}
    This shows that, for every $x \in \Rd$, the Fourier transform of the $L^1$-function $U_Og_0 \overline{T_xU_Og_0} - g_0 \overline{T_x g_0}$ vanishes. By the injectivity of the Fourier transform,
    \begin{align*}
        U_Og_0 \overline{T_xU_Og_0} = g_0 \overline{T_x g_0} \quad \text{for all } x \in \Rd.
    \end{align*}
    Taking $x = 0$ shows that $U_O g_0 = \alpha g_0$ for some function $\alpha \from \Rd \to \mathbb C$ of absolute value one, and then $\alpha = U_O g_0 / g_0 = T_x(\overline{g_0/U_O g_0}) = T_x(\alpha)$ shows that $\alpha$ is constant.
\end{proof}

\begin{remark}[Symplectic Rotations]
    In the proof of Theorem \ref{Gaussian_Gabor_frames_v2}, we used the fact that (up to a phase factor), we have $U g_0 = g_0$ for any unitary operator $U$ that is $\rho$-related to a symplectic matrix of the form
    \begin{align*}
        O = \begin{pmatrix}
            P & Q \\ - Q & P
        \end{pmatrix}.
    \end{align*}
    The group of such symplectic matrices is referred to as a unitary group and denoted by $\UtdR$, because the map $U(d, \mathbb C) \to \UtdR$ given by $P + i Q \mapsto \left(\begin{smallmatrix} P & Q \\ -Q & P \end{smallmatrix}\right)$ is an isomorphism of groups. It turns out that
    \begin{align*}
        U(2d, \mathbb R) = \SptdR \cap O(2d, \mathbb R),
    \end{align*}
    where $O(2d, \mathbb R)$ is the orthogonal group. Matrices in $\UtdR$ are therefore also referred to as \textit{symplectic rotations}. Again, we refer to de Gosson \cite{deGosson_book, deGosson_QM} for more details.

    The invariance of $g_0$ under operators $\rho$-related to symplectic rotations means that (just by symplectic covariance and Theorem \ref{Gaussian_Gabor_frames})
    $g_0$ generates a Gabor frame over any lattice of the form
    \begin{align*}
        O(D_\alpha \Zd \times D_\beta \Zd), \quad &\text{where } O \in \UtdR, D_\alpha = \mathrm{diag}(\alpha_1, \ldots, \alpha_d) \\ &\text{and } D_\beta = \mathrm{diag}(\beta_1, \ldots, \beta_d),
    \end{align*}
    if and only if $|\alpha_j \beta_j| < 1$. In other words, when it comes to the standard Gaussian, we have ``unitary freedom'' in our choice of lattice. 
    
    This unitary freedom is already included in Theorem \ref{Gaussian_Gabor_frames_v2}. To see this, let us choose
    \begin{align}
        \label{eq_A_for_Gaussian}
        A = \begin{pmatrix}
            P & Q \\ - Q & P
        \end{pmatrix} \begin{pmatrix}
            D_\alpha & 0 \\ 0 & D_\beta
        \end{pmatrix}, \quad \text{with } O = \begin{pmatrix}
            P & Q \\ - Q & P
        \end{pmatrix} \in \UtdR
    \end{align}
    and $D_\alpha$ and $D_\beta$ diagonal. We find that
    \begin{align*}
        X &= (P(DD_\alpha)^2 P^T + Q (DD_\alpha)^{-2} Q^T )^{-1}
        \\ \text{and} \quad Y &= -( -Q(DD_\alpha)^2 P^T + P(DD_\alpha)^{-2} Q^T) X,
    \end{align*}
    where we have used that $K = D_\alpha D_\beta$, which in turn follows from
    \begin{align*}
        A^T J A = \begin{pmatrix}
            D_\alpha & 0 \\ 0 & D_\beta
        \end{pmatrix}^T O^T J O \begin{pmatrix}
            D_\alpha & 0 \\ 0 & D_\beta
        \end{pmatrix} = \begin{pmatrix}
            0 & D_\alpha D_\beta \\ - D_\alpha D_\beta & 0
        \end{pmatrix}.
    \end{align*}
    If we choose $D = D_\alpha^{-1}$, then 
    \begin{align*}
        X = (PP^T + QQ^T)^{-1} \quad \text{and} \quad Y = -(-QP^T + PQ^T)X.
    \end{align*}
    If one writes out the equation $OO^T = I$, one finds that $X = I$ and $Y = 0$. Theorem \ref{Gaussian_Gabor_frames_v2} then tells us that $g_0$ generates a Gabor frame over $A\Ztd$ if and only if every entry of $K = D_\alpha D_\beta$ has absolute value $< 1$. 

    This example is another illustration of the utility of using the entries of the symplectic form $A^T J A$ as parameters. Given a lattice matrix $A$ of the form \eqref{eq_A_for_Gaussian}, without knowing the explicit decomposition into $O$, $D_\alpha$ and $D_\beta$, it seems very difficult to make sense of the condition $|\alpha_j \beta_j| < 1$ without considering the symplectic form $A^T J A$.
\end{remark}

Gaussian Gabor frames have also recently been investigated using methods from Kähler geometry by the second named author, Testorf and Wang \cite{Luef_Testorf_Wang}. It is interesting to note that the entries of $A^T J A$ appear in their results as well, where they are subject to conditions that are very different from those of Theorem \ref{Gaussian_Gabor_frames_v2}, such as conditions of linear independence over the integers.

\section{On the Necessity of Symplecticity}
\label{sec_necessity}

In this section, we explore the degree to which symplectic transformations are the only linear transformations of the time-frequency plane that preserve the structure of Gabor systems, in the sense of Corollary \ref{corollary_main}.

As a preliminary result, we observe that preservation of duality (at the level of lattice matrices) and the intertwining of symmetrized time-frequency shifts are both equivalent to symplecticity.

\begin{prop}
Let $\T \in \GLtdR$. Then, the following conditions are equivalent. 
\begin{itemize}
    \item[(a)] $\T$ is symplectic. 
    \item[(b)] $(\T A)^\circ = \T A^\circ$ for some (and if so, for all) $A \in \GLtdR$.
    \item[(c)] There exists a bounded invertible operator $\F$ on $L^2(\Rd)$ such that 
    \begin{align*}
        \F \rho(Ak) \F^{-1} = \rho(\T Ak)
    \end{align*}
    for all $A \in \GLtdR$ and $k \in \Ztd$.
\end{itemize}
\end{prop}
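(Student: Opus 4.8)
The plan is to establish $(a)\Leftrightarrow(b)$ by a short linear-algebra computation, $(a)\Rightarrow(c)$ by invoking Proposition \ref{prop_metaplectic_ops}, and $(c)\Rightarrow(a)$ by exploiting the commutation relations \eqref{commutation_relations}; together these yield all the equivalences.

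For $(a)\Leftrightarrow(b)$, I would unwind the definition $A^\circ=-JA^{-T}$. Since $(\T A)^{-T}=\T^{-T}A^{-T}$, the identity $(\T A)^\circ=\T A^\circ$ reads $-J\T^{-T}A^{-T}=-\T J A^{-T}$, and cancelling the invertible matrix $A^{-T}$ leaves $J\T^{-T}=\T J$, equivalently $\T J\T^T=J$ (multiply on the right by $\T^T$). This condition does not involve $A$, which is precisely why ``for some $A$'' and ``for all $A$'' coincide in $(b)$; reversing the computation recovers $(b)$ from it. Taking inverses of $\T J\T^T=J$ and using $J^{-1}=-J$ gives $\T^{-T}J\T^{-1}=J$, hence $\T^T J\T=J$, so that $\T J\T^T=J\Leftrightarrow\T^T J\T=J\Leftrightarrow\T\in\SptdR$, which is $(a)$. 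For $(a)\Rightarrow(c)$, if $\T$ is symplectic then Proposition \ref{prop_metaplectic_ops} supplies a unitary operator $U$ on $L^2(\Rd)$ with $U\rho(z)U^*=\rho(\T z)$ for all $z\in\Rtd$; since $U$ is bounded and invertible with $U^{-1}=U^*$, the choice $\F\defeq U$ proves $(c)$ (specialize $z=Ak$).

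For $(c)\Rightarrow(a)$, the key preliminary observation is that every $z\in\Rtd$ is of the form $Ak$ with $A\in\GLtdR$ and $k\in\Ztd$: for $z=0$ take $k=0$, and for $z\neq 0$ complete $z$ to a basis of $\Rtd$, let $A$ be the matrix with those columns and $k$ the first standard basis vector. Hence $(c)$ is equivalent to $\F\rho(z)\F^{-1}=\rho(\T z)$ for all $z\in\Rtd$. I then compute $\rho(\T z)\rho(\T w)$ in two ways. Conjugating the relation $\rho(z)\rho(w)=e^{2\pi i w^T J z}\rho(w)\rho(z)$ by $\F$ gives $\rho(\T z)\rho(\T w)=e^{2\pi i w^T J z}\rho(\T w)\rho(\T z)$, while applying the same relation directly to $\T z$ and $\T w$ gives $\rho(\T z)\rho(\T w)=e^{2\pi i (\T w)^T J(\T z)}\rho(\T w)\rho(\T z)$. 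Since $\rho(\T w)\rho(\T z)$ is a nonzero (unitary) operator, the two scalars must agree, so $w^T(\T^T J\T-J)z\in\mathbb Z$ for all $z,w\in\Rtd$. Because $z\mapsto w^T(\T^T J\T-J)z$ is linear, replacing $z$ by $tz$ with $t\in\mathbb R$ forces $w^T(\T^T J\T-J)z=0$ for all $z,w$, whence $\T^T J\T=J$ and $\T\in\SptdR$.

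I expect the main obstacle, such as it is, to lie in $(c)\Rightarrow(a)$: one must notice that the family $\{Ak:A\in\GLtdR,\,k\in\Ztd\}$ already exhausts $\Rtd$, so that stating $(c)$ only on lattice points costs nothing, and then use the elementary but easily overlooked fact that a $\mathbb Z$-valued bilinear form on $\Rtd$ vanishes identically. It is worth stressing that this reasoning never uses unitarity of $\F$; boundedness and invertibility suffice, since conjugation by $\F$ is still multiplicative. This is what makes the proposition a genuine converse to Corollary \ref{corollary_main} and not merely to its unitary part.
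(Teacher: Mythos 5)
Your proof is correct. The equivalence $(a)\Leftrightarrow(b)$ and the implication $(a)\Rightarrow(c)$ are handled exactly as in the paper: unwind $A^\circ=-JA^{-T}$, cancel $A^{-T}$, and invoke Proposition \ref{prop_metaplectic_ops}. The only genuine divergence is in $(c)\Rightarrow(a)$. The paper stays at the level of lattices: from \eqref{eq_final_commutation_relation} it extracts that $A^TJA-(\T A)^TJ(\T A)$ lies in $\MtdZ$ for every $A$, then argues that the continuous map $A\mapsto A^TJA-(\T A)^TJ(\T A)$ into the discrete set $\MtdZ$ is constant on the identity component of $\GLtdR$, and compares $A=I$ with $A=2I$. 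You instead observe that $\{Ak: A\in\GLtdR,\ k\in\Ztd\}$ exhausts $\Rtd$, upgrade the hypothesis to $\F\rho(z)\F^{-1}=\rho(\T z)$ for all $z\in\Rtd$, and then kill the $\mathbb Z$-valued bilinear expression $w^T(\T^TJ\T-J)z$ by scaling $z$. Both arguments rest on the same integrality observation coming from matching phase factors in the commutation relations; yours is slightly more elementary (no appeal to connectedness of $\mathrm{GL}^+(2d,\mathbb R)$), at the small cost of the preliminary remark that every point of $\Rtd$ is a lattice point of some lattice. You are also right, and it is worth having said explicitly, that unitarity of $\F$ plays no role—only boundedness and invertibility are used, exactly as in the paper.
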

\begin{proof}
    Writing out the equation $(\T A)^\circ = \T A^\circ$, we get $-J \T^{-T} A^{-T} = - \T J A^{-T}$. Canceling $A^{-T}$ and inverting both sides, we see that this is equivalent to $J = \T^T J \T$, which is precisely the condition that $\T$ be symplectic. This proves that $(a) \iff (b)$ (as well as the claim in $(b)$, since $A$ cancels out).
    
    We know that $(a) \implies (c)$ by Proposition \ref{prop_metaplectic_ops}. If $(c)$ holds, then $\rho(Ak)$ and $\rho(\T Ak)$ must satisfy the same commutation relations for all $A \in \GLtdR$ and $k \in \Ztd$. Considering Equation \eqref{eq_final_commutation_relation} (which also holds for the symmetrized time-frequency shifts), this means that $A^T J A - (\T A)^T J (\T A)$ is integer valued. This implies that we have a well-defined map
    \begin{align*}
        \GLtdR &\to \MtdZ \\
        A &\mapsto A^T J A - (\T A)^T J (\T A).
    \end{align*}
    This map is clearly continuous, so since $\MtdZ$ is discrete, it must be constant on the identity component of $\GLtdR$. In particular, equating the outputs for $A = I$ and $A = 2I$, we get $J - \T^T J \T  = 4(J - \T^T J \T)$, so that $J = \T^T J \T$.
\end{proof}

We now wish to prove a similar result at the level of frame operators. That is, we wish to show that the conclusions of Corollary $\ref{corollary_main}$ (with $S \in \GLtdR$ arbitrary) imply that $S$ is symplectic. This turns out to be significantly more difficult. The reason is that frame operators are much more complicated objects than time-frequency shifts. In particular, they do not satisfy a nice commutation relation that we can exploit. Our approach will be to express certain time-frequency shifts as frame operators via the Janssen representation, and then modify the ideas that went into the proof of the previous proposition. This method of proof will not be able to distinguish between linear and antilinear operators (to be defined shortly), so we are forced to deal with antilinear operators as well. On the plus side, allowing antilinear operators gives us another symmetry of Gabor structures, in addition to symplectic covariance, namely complex conjugation (see Remark \ref{remark_complex_conjugation}). Moreover, our result (Theorem \ref{theorem_converse}) implies that this is essentially the only new symmetry that appears (see the discussion following Theorem \ref{theorem_converse} for details).

A function $F \from L^2(\Rd) \to L^2(\Rd)$ is called an \textit{antilinear operator} if $F(\alpha f + \beta g) = \overline{\alpha} Ff + \overline{\beta} Fg$ for all $f, g \in L^2(\Rd)$ and $\alpha, \beta \in \mathbb C$. Boundedness of antilinear operators is defined in precisely the same way as for linear operators. The adjoint $F^*$ of an antilinear operator is defined by the condition $\langle Ff, g \rangle = \overline{\langle f, F^*g \rangle}$ (and is guaranteed to exist by the usual argument if $F$ is bounded). An antilinear operator $F$ is called \textit{antiunitary} if it is invertible and satisfies $F^{-1} = F^*,$ which is equivalent to invertibility along with the condition $\langle Ff, Fg \rangle = \overline{\langle f, g \rangle}$ for all $f, g \in L^2(\Rd)$ (which shows that antiunitary operators are bounded, just like unitary operators).\footnote{Antilinear and antiunitary operators are really just linear and unitary operators $L^2(\Rd) \to \overline{L^2(\Rd)}$, where $\overline{L^2(\Rd)}$ is the set $L^2(\Rd)$ with the usual addition, but with scalar multiplication and inner product defined by $(\alpha f)(t) = \overline{\alpha} f(t)$ and $\langle f, g \rangle_{\overline{L^2(\Rd)}} = \overline{\langle f, g \rangle_{L^2(\Rd)}}$.}

\begin{remark}[Complex Conjugation]
    \label{remark_complex_conjugation}
    Let
    \begin{align*}
        \R \coloneqq \begin{pmatrix}
            I & 0 \\ 0 & -I
        \end{pmatrix}, \quad \text{so that } \R^T J \R = -J,
    \end{align*}
    and let $C \from L^2(\Rd) \to L^2(\Rd)$ denote complex conjugation:\ $Cf = \bar f$. This is an antiunitary operator satisfying $C^* = C^{-1} = C$. We claim that, for all lattices $\Lambda \subset \Rtd$ and $g, h \in L^2(\Rd)$, the following holds:\
    \begin{itemize}
        \item[(i)] $\mathcal G(Cg, \R \Lambda)$ is a Bessel sequence if and only if $\mathcal G(g, \Lambda)$ is a Bessel sequence;
        \item[(ii)] $C S_{g, h}^\Lambda C^* = S_{Cg, Ch}^{\R \Lambda}$ whenever $\mathcal G(g, \Lambda)$ and $\mathcal G(h, \Lambda)$ are Bessel sequences. 
    \end{itemize}
    To see this, note that $T_x C = C T_x$ and $M_\omega C = C M_{-\omega}$, so that 
    \begin{align*}
        \pi(\R z) C = M_{-\omega} T_x C = C \pi(z) \quad \text{for all } z = (x, \omega) \in \Rtd.
    \end{align*}
    The claim now follows by calculations that are identical to those in the proof of Theorem \ref{theorem_main}. Extending our language somewhat, this identity shows that $C$ is an antiunitary operator that is $\pi$-related to the (non-symplectic) matrix $R$, and our claim is that $C$ implements an antiunitary equivalence between the Gabor structures over $\Lambda$ and $R\Lambda$, for any lattice $\Lambda$. Thus, if we also consider antilinear equivalences, we have found another symmetry of Gabor structures. 
\end{remark}

The following theorem shows that complex conjugation and unitary operators \related to symplectic matrices (which we may take to be metaplectic) capture all the equivalences of Gabor structures stemming from linear transformations of the time-frequency plane. Note that assumptions $(i)$ and $(ii)$ are precisely the assumption that $\F$ implements an equivalence between the Gabor structures over $\Lambda$ and $\T \Lambda$ for every lattice $\Lambda \subset \Rtd$ (see Definition \ref{definition_main}).\footnote{To be completely precise, we would need to replace ``linear operator'' in Definition \ref{definition_main} by ``linear or antilinear operator''.} (It is tempting to call such an equivalence a \textit{global} or \textit{uniform} equivalence of Gabor structures.) 

\begin{theorem}[On the Necessity of Symplecticity]
    \label{theorem_converse}
    Let $\T \in \GLtdR$ and suppose that there exists a linear or antilinear bounded invertible operator $\F$ on $L^2(\Rd)$ such that, for all lattices $\Lambda \subset \Rtd$ and all $g, h \in L^2(\Rd)$, the following holds:
    \begin{itemize}
        \item[(i)] $\mathcal G(\F g, \T \Lambda)$ is a Bessel sequence if and only if $\mathcal G(g, \Lambda)$ is a Bessel sequence;
        \item[(ii)] $\F S_{g, h}^\Lambda \F^{-1} = S_{\F g, \F h}^{\T \Lambda}$ whenever $\mathcal G(g, \Lambda)$ and $\mathcal G(h, \Lambda)$ are Bessel sequences.
    \end{itemize}
    Then, either $\T$ or $\R \T$ is symplectic.
\end{theorem}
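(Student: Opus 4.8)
The plan is to mimic the proof of the preceding proposition, but to replace time-frequency shifts (which are no longer directly available, since $\F$ need not intertwine $\rho$'s) by frame operators that \emph{equal} time-frequency shifts via the Janssen representation (Proposition \ref{prop_Janssen}). First I would pick a fixed atom $g\in\Falg$ with $\|g\|=1$ and rescale so that $\mathcal G(g,\epsilon\Ztd)$ is a frame with dual atom also in $\Falg$ (Lemma \ref{lemma_properties_of_Feichtinger}(iv)); absorbing $\epsilon$ into $\T$, assume $\epsilon=1$. Then for $A\in\GLtdR$ with $|A|$ small enough, $\mathcal G(g,A)$ is Bessel and so is $\mathcal G(g,A^\circ)$, and the Janssen representation gives
\begin{align*}
    S^{A^\circ}_{g,g}(f) = |A^\circ|\, S^{A}_{g,g}(f)\big/|A|^2 \cdot(\cdots)
\end{align*}
— more precisely, applying Proposition \ref{prop_Janssen} with $f=g_0$ (or any test function) and isolating a single lattice point, $\langle g,\pi((A^\circ)^\circ k)g\rangle\,\pi((A^\circ)^\circ k)$ appears as a summand. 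The cleanest route: for a lattice $\Lambda$ of small enough covolume, the operator $\frac{1}{|A^\circ|}S^{A}_{g,g}$, expanded via Janssen over $\Lambda^\circ=A^\circ\Ztd$, is a norm-convergent sum $\sum_{k}\langle g,\pi(A^\circ k)g\rangle\,\pi(A^\circ k)$; by choosing $A$ so that $A^\circ$ has a very short nonzero vector $z_0$ and all other nonzero lattice vectors are long, the term $\pi(z_0)$ can be extracted in the limit (using that $\langle g,\pi(z)g\rangle=V_gg(z)\to 0$ as $|z|\to\infty$ for $g\in\Falg$, and $=1$ at $z=0$). This lets me recover the behaviour of $\pi(z_0)$ from frame operators, and hence transfer it through $\F$ using assumption (ii).

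Concretely, I would argue that hypotheses (i)–(ii) force $\F\,\pi(z)\,\F^{-1}$ to be, up to a unimodular scalar depending on $z$, equal to $\pi(\T z)$ for all $z$ in a dense subset of $\Rtd$ (and then all of $\Rtd$ by continuity — time-frequency shifts depend strongly continuously on $z$, and $\F$ is bounded). The key mechanism: conjugating the Janssen-expanded frame operator identity $\F S^\Lambda_{g,h}\F^{-1}=S^{\T\Lambda}_{\F g,\F h}$ by $\F$ and extracting leading terms on both sides equates (a scalar multiple of) $\F\pi(z)\F^{-1}$ with (a scalar multiple of) $\pi(\T z)$, where on the right side I must also worry that $\F g,\F h$ need not be in $\Falg$ — but boundedness of $\F$ and $\F^{-1}$ keeps the relevant series convergent, and one only needs the \emph{identity} of the two operators, not membership in $\Falg$. (If $\F$ is antilinear, the same computation yields $\F\pi(z)\F^{-1}=c(z)\,\overline{\pi(\T z)}=c(z)\,\pi(\R\T z)$ up to conjugation-induced sign flips in the frequency variable, cf. Remark \ref{remark_complex_conjugation}; this is the source of the $\R\T$ alternative.) Write $S'=\T$ in the linear case and $S'=\R\T$ in the antilinear case, so that in both cases $\F'\pi(z)\F'^{-1}=c(z)\pi(S'z)$ with $\F'$ \emph{linear} bounded invertible (take $\F'=\F$ or $\F'=\F C$).

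Once $\F'\pi(z)\F'^{-1}=c(z)\,\pi(S'z)$ is established for all $z\in\Rtd$ with $|c(z)|=1$, symplecticity of $S'$ follows by comparing commutation relations: from
\begin{align*}
    \F'\,\pi(z)\pi(w)\,\F'^{-1} = c(z)c(w)\,\pi(S'z)\pi(S'w)
\end{align*}
and the relation $\pi(z)\pi(w)=e^{2\pi i\,w^T J z}\pi(w)\pi(z)$ applied on both sides, the phase factors $c(z)c(w)$ cancel and one obtains $e^{2\pi i\,w^T J z}=e^{2\pi i\,(S'w)^T J (S'w)/\cdots}$ — precisely $e^{2\pi i\,w^TJz}=e^{2\pi i\,(S'w)^T J (S'z)}$, i.e. $w^T(J-S'^TJS')z\in\mathbb Z$ for all $z,w\in\Rtd$. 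Since this holds for all real $z,w$ and the left side is bilinear and continuous, it must vanish identically, giving $S'^T J S'=J$, so $S'\in\SptdR$. Thus $\T$ (linear case) or $\R\T$ (antilinear case) is symplectic, as claimed.

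I expect the main obstacle to be the \emph{extraction step}: making rigorous that a single time-frequency shift $\pi(z_0)$ can be isolated from the Janssen-represented frame operator as a genuine operator identity, uniformly enough that conjugation by $\F$ (and the limit $|A|\to 0$ or the geometric degeneration of $A^\circ$) commutes with the extraction. One must control the tail $\sum_{k\neq 0}\langle g,\pi(A^\circ k)g\rangle\pi(A^\circ k)$ in operator norm; since $\|\pi(z)\|=1$ for every $z$, this requires $\sum_{k\neq0}|V_gg(A^\circ k)|\to 0$, which one can arrange for $g\in\Falg$ (so $V_gg\in L^1$) by taking lattices $A^\circ$ that are, say, $N\Ztd$ rescaled in one coordinate and dilated in the complementary ones so that exactly one short vector survives while the rest escape to infinity — a Poisson-summation / Riemann-sum estimate. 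A secondary subtlety is ensuring that the atoms $\F g$, $\F h$ yield Bessel sequences over $\T\Lambda$ so that $S^{\T\Lambda}_{\F g,\F h}$ is even defined; this is exactly guaranteed by hypothesis (i). Handling the antilinear case amounts to bookkeeping the conjugations $T_xC=CT_x$, $M_\omega C=CM_{-\omega}$ from Remark \ref{remark_complex_conjugation}, which is routine once the linear case is in hand.
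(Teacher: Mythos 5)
Your high-level instinct—use the Janssen representation to turn time-frequency shifts into frame operators and then push them through $\F$ via hypothesis (ii)—is exactly the right one, and your closing step (comparing commutation relations to force $w^T(J-S'^TJS')z\in\mathbb Z$, hence $=0$) is sound. But the central mechanism you propose, extracting a single $\pi(z_0)$ as a \emph{limit} of Janssen-expanded frame operators over degenerating lattices, is both unnecessary and unworkable, and it is here that the key idea is missing. The paper gets an \emph{exact} identity with no limits: take $g\in\Falg$ generating a frame over $B\Ztd$ with canonical dual atom $h=(S^B_{g,g})^{-1}g\in\Falg$ (Lemma \ref{lemma_properties_of_Feichtinger}(iv)); the Wexler--Raz relations (Proposition \ref{prop_Wexler}) give $\langle\pi(B^\circ l)h,\pi(B^\circ k)g\rangle=|B|\delta_{lk}$, so the Janssen expansion of $S^B_{g,\pi(B^\circ l)h}$ collapses to the single term $\pi(B^\circ l)$. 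One then alternates (ii) and Janssen to exhibit $\pi(B^\circ l)$ as a frame operator over the lattice $(\T^{-1}(\T B)^\circ)^\circ\Ztd$; since frame operators commute with time-frequency shifts over their own lattice, this forces the lattice inclusion $B^\circ\Ztd\subset\T^{-1}(\T B)^\circ\Ztd$, i.e.\ integrality of $-A^T(\T^TJ\T)JA^{-T}$ for every $A\in\GLtdR$, and continuity into the discrete set $\MtdZ$ plus the normalization $|\det\T|=1$ (which you omit, and which the paper gets from the density theorem by iterating $\F$ and $\T$) yields $(\T^TJ\T)J=\pm I$. No intertwining of individual time-frequency shifts by $\F$ is ever established.

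Your extraction step fails for concrete reasons. First, a lattice containing a short vector $z_0$ contains all integer multiples $nz_0$, so no geometric degeneration isolates the single term $\pi(z_0)$; the best you can hope for in the limit is a whole one-dimensional sum $\sum_n V_gg(nz_0)\pi(nz_0)$. Second, and fatally, on the transformed side the Janssen coefficients are $\langle\F h,\pi(w)\F g\rangle$ with $w$ ranging over $(\T\Lambda)^\circ$, and $\F g,\F h$ need not lie in $\Falg$; Besselness (which is all hypothesis (i) gives you) provides only $\ell^2$-type control, not the $\ell^1$ decay needed to peel off a leading term in operator norm, so the tail cannot be controlled. Third, your target identity $\F\pi(z)\F^{-1}=c(z)\pi(\T z)$ is stronger than the theorem requires and stronger than the paper can prove: the discussion after Theorem \ref{theorem_converse} explicitly notes that one cannot conclude $\F$ (or $C\F$) is $\rho$-related to anything, and the subsequent remark leaves open precisely your claim that linear $\F$ forces $\T$ (rather than $\R\T$) to be symplectic. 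Finally, even if extraction worked, the shifts appearing on the right-hand side live on $(\T\Lambda)^\circ=(-J\T^{-T}J)\Lambda^\circ$, so you would be intertwining $\pi(z)$ with $\pi(-J\T^{-T}Jz)$ rather than $\pi(\T z)$—a repairable bookkeeping slip, but one that signals the route needs more care than the sketch provides.
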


\begin{proof}
    First of all, we claim that $(i)$ and $(ii)$ together imply that $|\det \T| = 1$. To see this, recall that $(i)$ and $(ii)$ together imply that, for any atom $g \in L^2(\Rd)$ and lattice $\Lambda$, the Gabor system $\mathcal G(Fg, T\Lambda)$ is a frame if and only if $\mathcal G(g, \Lambda)$ is a frame (see the discussion following Definition \ref{definition_main}). Choose now any atom $g \in L^2(\Rd)$ and lattice $\Lambda$ such that $\mathcal G(g, \Lambda)$ is a frame. By induction, $\mathcal G(F^n g, T^n \Lambda)$ is a frame for every integer $n \geq 1$. By the density theorem (Theorem \ref{theorem_density}), we must have $|\det T|^n |\Lambda| = |T^n \Lambda| \leq 1$ for all $n$, hence $|\det T| \leq 1$. Now, $\mathcal G(F^{-1}g, T^{-1}\Lambda)$ also has to be a frame, because $\mathcal G(g, \Lambda) = \mathcal G(FF^{-1}g, TT^{-1} \Lambda)$ is a frame. Thus, in the same manner, induction leads to the conclusion that $|\det T^{-1}| \leq 1$, so $|\det T| = 1$.
    
    Fix now any $0 \neq g \in \Falg$ and $A \in \GLtdR$. By point $(iv)$ of Lemma \ref{lemma_properties_of_Feichtinger}, there exists $\epsilon > 0$ such that $g$ generates a Gabor frame over $\epsilon A \Ztd$ and the canonical dual atom $h \coloneqq (S_{g, g}^{\epsilon A})^{-1} g$ is in $\Falg$ as well. We will write $B \coloneqq \epsilon A$, so we have that $\mathcal G(g, B \Ztd)$ is a Gabor frame and $h = (S_{g, g}^B)^{-1} g \in \Falg$.
    
    By the Wexler-Raz biorthogonality relations (Proposition \ref{prop_Wexler}), we have that
    \begin{align*}
        \blangle \pi(B^\circ l) h, \pi(B^\circ k) g \brangle = |B| \delta_{lk} \quad \text{for all } k, l \in \Ztd
    \end{align*}
    and hence, by Janssen's representation (Proposition \ref{prop_Janssen}), 
    \begin{align*}
        S^B_{g, \pi( B^\circ l ) h} = \frac{1}{|B|}\sum_{k \in \Ztd} \blangle \pi(B^\circ l) h, \pi(B^\circ k) g \brangle \pi(B^\circ k) = \pi(B^\circ l) \quad \text{for all } l \in \Ztd.
    \end{align*}
    In what follows, we will alternate between using assumption $(ii)$ and Janssen's representation until we arrive at
    \begin{align}
        \label{eq_proof_necessity}
        \pi (B^\circ l)(f) = S^{(\T^{-1}(\T B)^\circ)^\circ}_{g, \pi(B^\circ l) h}(f) \quad \text{for all } f \in \F^{-1}(\Falg).
    \end{align}
    
    Let $f \in \F^{-1}(\Falg)$. We have shown that $\pi(B^\circ l) = S^B_{g, \pi( B^\circ l ) h}$. Using assumption $(ii)$ and then Janssen's representation, we find that
    \begin{align*}
        \pi (B^\circ l)(f) &= S^{B}_{g, \pi(B^\circ l)h} (f) = \F^{-1} S^{\T B}_{\F g, \F \pi(B^\circ l) h} (\F f) = \frac{1}{|\T B|} \F^{-1} S^{(\T B)^\circ}_{\F g, \F f} \bl( \F \pi( B^\circ l) h \br).        
    \end{align*}
    The application of Janssen's representation is valid since $Fg, F\pi(B^\circ l)h$ and $Ff$ all generate Bessel sequences over $TB \Ztd$. Indeed, $Fg$ and $F\pi(B^\circ l)h$ generate Bessel sequences over $TB \Ztd$ by assumption $(i)$, since $g, \pi(B^\circ l)h \in \Falg$ generate Bessel sequences over any lattice by Lemma \ref{lemma_properties_of_Feichtinger} (and in particular over $B\Ztd$), and $Ff \in \Falg$ also generates a Bessel sequence over any lattice. 
    
    We now apply assumption (ii) again (in the opposite direction this time, using $(TB)^\circ = TT^{-1}(TB)^\circ$) to obtain
    \begin{align*}
        \frac{1}{|\T B|} \F^{-1} S^{(\T B)^\circ}_{\F g, \F f} \bl( \F \pi( B^\circ l) h \br) 
        = \frac{1}{|\T B|} S^{\T^{-1}(\T B)^\circ}_{g, f} \bl( \pi(B^\circ l) h \br).
    \end{align*}
    Applying Janssen's representation one more time, we get
    \begin{align*}
        \frac{1}{|\T B|} S^{\T^{-1}(\T B)^\circ}_{g, f} \bl( \pi(B^\circ l) h \br)  
        = \frac{1}{|\T B|} \frac{1}{|\T^{-1}(\T B)^\circ|} S^{(\T^{-1}(\T B)^\circ)^\circ}_{g, \pi(B^\circ l) h}(f).
    \end{align*}
    This time, it is justified by the fact that $g, \pi(B^\circ l)h \in \Falg$ generate Bessel sequences over any lattice and $f$ generates a Bessel sequence over $\T^{-1}(\T B)^\circ \Ztd$ by assumption $(i)$ (since $\F f \in \Falg$ generates a Bessel sequence over any lattice, and in particular over $\T \T^{-1}(\T B)^\circ \Ztd$). Finally, the constant we have ended up with in front of the frame operator equals one, because $|B^\circ| = |B|^{-1}$ and $|\det T| = 1$. Stringing together the last three displayed equations, we have proved \eqref{eq_proof_necessity}.
    
    Since $\Falg$ is dense in $L^2(\Rd)$ and since bounded and invertible operators map dense sets to dense sets, \eqref{eq_proof_necessity} implies that
    \begin{align*}
        \pi (B^\circ l) = S^{(\T^{-1}(\T B)^\circ)^\circ }_{g, \pi(B^\circ l)h} \quad \text{for all } l \in \Ztd.
    \end{align*}
    Frame operators over any given lattice commute with time-frequency shifts over the same lattice, so this equality implies that $\pi(B^\circ l)$ commutes with all time-frequency shifts over $(\T^{-1}(\T B)^\circ)^\circ$. Thus, we have
    \begin{align*}
        B^\circ l \in (\T^{-1}(\T B)^\circ)^{\circ \circ} \Ztd = \T^{-1}(\T B)^\circ \Ztd \quad \text{for all } l \in \Ztd.
    \end{align*}
    
    We have now shown that $B^\circ \Ztd \subset \T^{-1}(\T B)^\circ \Ztd$, which means that the matrix
    \begin{align*}
        (\T^{-1}(\T B)^\circ)^{-1} B^\circ = ((\T B)^\circ)^{-1} \T B^\circ
    \end{align*}
    is integer valued. Recalling that $B = \epsilon A$ and using the definition of $^\circ$, we find that
    \begin{align*}
        ((\T B)^\circ)^{-1} \T B^\circ = - A^T(\T^T J \T)J A^{-T} \in \MtdZ.
    \end{align*}
    But $A \in \GLtdR$ was arbitrary, and the map
    \begin{align*}
        \GLtdR &\to \MtdZ
        \\ A &\mapsto A^T (\T^T J \T) J A^{-T}
    \end{align*}
    is continuous, so it must be constant on the identity component $\mathrm{GL}^+(2d, \mathbb R)$ of $\GLtdR$, which consists of all invertible matrices with positive determinants. Equating the outputs for an arbitrary $A \in \mathrm{GL}^+(2d, \mathbb R)$ and for $A = I$, we get
    \begin{align*}
        A^T (\T^T J \T) J A^{-T} = (\T^T J \T) J \quad \text{for all } A \in \mathrm{GL}^+(2d, \mathbb R),
    \end{align*}
    which implies that $(\T^T J \T) J$ commutes with all of $\mathrm{GL}^+(2d, \mathbb R)$. The only matrices with this property are scalar multiples of the identity,\footnote{Here is a quick argument showing that any $M \in \MtdR$ that commutes with all of $\mathrm{GL}^+(2d, \mathbb R)$ is a scalar multiple of the identity. Let $E^{ij} \in \MtdR$ be the matrix with $1$ in the $(i, j)$-th position and zeros everywhere else. Then $I + E^{ij}$ has positive determinant, so $M(I + E^{ij}) = (I + E^{ij})M$, and hence $ME^{ij} = E^{ij}M$. Calculating these products leads to the conclusion that $M$ is a scalar multiple of the identity.} so there is some $r \in \mathbb R$ such that $(\T^T J \T) J = r I$. Since $| \det \T |$ and  $|\det J|$ both equal one, we must have $r = \pm 1$. If $r = -1$, then $\T$ is symplectic, while if $r = 1$, then $\R \T$ is symplectic (because then $T^TJT = -J = R^TJR$).
\end{proof}

\begin{remark}
    As $\R$ corresponds to complex conjugation, which is antilinear, it seems reasonable to suspect that $\T$ has to be symplectic if $\F$ is assumed linear. This is equivalent to the statement that there is no \textit{linear} bounded operator which implements an equivalence between the Gabor structures over $\Lambda$ and $R\Lambda$ for every lattice $\Lambda$. Settling this question would give a complete characterization of all linear transformations $T$ of the time-frequency plane which lift to \textit{linear} bounded operators implementing an equivalence between the Gabor structures over $\Lambda$ and $T\Lambda$ for every lattice $\Lambda$; the answer is either $\SptdR$ or $\SptdR \cup R \SptdR$.
\end{remark}

Theorem \ref{theorem_converse} does not allow us to conclude that $\F$ or $C \F$ must be \related to a symplectic matrix (where $C$ is complex conjugation—see Remark \ref{remark_complex_conjugation}). However, combined with Theorem \ref{theorem_main}, it shows that the equivalence between the Gabor structures over $\Lambda$ and $\T \Lambda$ that is implemented by $\F$ or $C\F$ is also implemented by some unitary operator $U$ that is $\rho$-related to $T$ or $RT$. In fact, suppose for now that $T$ is symplectic and let $U$ be a unitary operator \related to $\T$. Setting $W \coloneqq U^* \F$, we find that
\begin{align}
    \label{eq_W}
    W S_{g, h}^\Lambda W^{-1} = S_{Wg, Wh}^\Lambda 
\end{align} 
for all lattices $\Lambda \subset \Rtd$ and $g, h \in L^2(\Rd)$ generating Bessel sequences over $\Lambda$. This shows that $\F$ ($= UW$) is just the operator $U$ up to some operator $W$ that intertwines frame operators over any given lattice. In more generic (and imprecise) terminology, $W$ is like an automorphism of the Gabor structure over $\Lambda$, and the isomorphism $\F$ from the Gabor structure over $\Lambda$ to the Gabor structure over $\T \Lambda$ is just this automorphism followed by the isomorphism $U$. If instead $RT$ is symplectic, we let $U$ be $\rho$-related to $RT$ and we set $W \coloneqq U^* CF$, then $W$ still satisfies \eqref{eq_W}, so a similar statement regarding $CF$ ($= UW$) is true in this case. It would be interesting to know whether there are any operators $W$ satisfying \eqref{eq_W} besides scalar multiples of the identity. If not, then $F$ or $CF$ would have to be $\rho$-related to $T$ or $RT$ (and hence a scalar multiple of a metaplectic operator).

In summary, whenever a linear transformation $T$ of $\Rtd$ lifts to a bounded (linear or antilinear) operator $F$ on $L^2(\Rd)$ implementing an equivalence between the Gabor structures over $\Lambda$ and $T\Lambda$, for all lattices $\Lambda$, then either $T$ or $RT$ is symplectic and we can find a metaplectic operator $U$ such that $U$ or $CU$ implement the same equivalences as $F$. Thus, symplectic covariance and Remark \ref{remark_complex_conjugation} capture all symmetries of Gabor structures stemming from linear transformations of the time-frequency plane.

\section{Separable Lattices and Lagrangian Planes}
\label{subsec_separable_geometric}

In this section, we show that separable lattices can be characterized in terms of a standard notion from symplectic geometry, namely transversal pairs of Lagrangian planes.

Given an antisymmetric matrix $\theta \in \GLtdR$, consider the \textit{$\theta$-symplectic group} 
\begin{align*}
    \SpthetatdR \coloneqq \{ S \in \GLtdR : S^T \theta S = \theta \}.
\end{align*}
This is precisely the group of linear transformations that preserve the symplectic form $\Omega$ represented by $\theta$. In this terminology, ordinary symplectic matrices are $J$-symplectic and $\SptdR = \mathrm{Sp}_J(2d, \mathbb R)$.

Consider the subspaces 
\begin{align}
    \label{eq_std_Lagrangian_frame}
    \ell_x \coloneqq \{ (x, 0) \in \Rtd : x \in \Rd \} \quad \text{and} \quad \ell_\omega \coloneqq \{ (0, \omega) \in \Rtd : \omega \in \Rd \}
\end{align}
of $\Rtd$. The standard symplectic form $\sigma$ determines an isomorphism $\ell_\omega \cong \ell_x^*$ of vector spaces (where $\ell_x^*$ is the dual of $\ell_x$) via $(0,\omega) \mapsto \sigma((0,\omega),-)$, and if we denote the resulting duality pairing by $\langle -, - \rangle_\sigma$, then we can write
\begin{align*}
    \sigma \bl((x, \omega), (y, \eta) \br) &= \sigma \bl((0, \omega), (y, 0) \br) - \sigma \bl((0, \eta), (x, 0) \br) = \langle y, \omega \rangle_\sigma - \langle x, \eta \rangle_\sigma,
\end{align*}
so that the symplectic form is just the antisymmetrized duality pairing (cf.\ Equation \eqref{std_symp_form}). In fact, this shows how to naturally define a symplectic form on $V \oplus V^*$ for any vector space $V$, and, if one phrases the theory of Gabor frames in more abstract terms (as one does when considering general locally compact abelian groups), then this is essentially how the symplectic form enters the commutation relations in the first place.

Note that $\sigma$ vanishes identically when restricted to either $\ell_x$ or $\ell_\omega$. Whenever we have a direct sum decomposition of $\Rtd$ into two such subspaces, we can decompose $\sigma$ in this manner. This also works for an arbitrary symplectic form, which motivates the following definition. Further details can be found in de Gosson's monograph \cite{deGosson_book}.

\begin{define}[Lagrangian Planes]
    Let $\Omega \from \Rtd \times \Rtd \to \mathbb R$ be a symplectic form. An \textit{$\Omega$-Lagrangian plane} is a $d$-dimensional subspace $\ell \subset \Rtd$ on which $\Omega$ vanishes identically, i.e.\ $\Omega|_{\ell \times \ell} = 0$. A pair $(\ell, \ell')$ of $\Omega$-Lagrangian planes is called \textit{transversal} if $\ell + \ell' = \Rtd$ (which is equivalent to $\Rtd = \ell \oplus \ell'$ because of the dimensions involved).
\end{define}

\noindent In this terminology, the discussion preceding the definition says that if we have a transversal pair $(\ell, \ell')$ of $\Omega$-Lagrangian planes, then we obtain an isomorphism $\Rtd \cong \ell \oplus \ell^*$ under which $\Omega$ becomes the antisymmetrized duality pairing. If $\theta$ is the matrix representing $\Omega$, we will also refer to $\Omega$-Lagrangian planes as $\theta$-Lagrangian.

We now show that lattices symplectically related to separable lattices can be characterized geometrically in terms of this notion. The closely related notion of Lagrangian subgroups of $\Rtd$, and its role in time-frequency analysis, has recently been investigated by Fulsche and Rodriguez Rodriguez \cite{fulsche2023commutative}. Recall that $\Theta_\Lambda$ denotes the set of all symplectic forms determined by the lattice $\Lambda$ (see Equation \eqref{eq_set_of_symp_forms}).

\begin{prop}[Separable Lattices and Lagrangian Planes]
    \label{prop_separable_lattices_geometric}
    Let $\Lambda \subset \Rtd$ be a lattice. Then, the following conditions are equivalent to $\Lambda$ being symplectically related to a separable lattice (and hence can be added to the list of equivalent conditions in Proposition \ref{prop_separable_latties}).
    \begin{itemize}
        \item[(c)] There exists a basis $(\lambda_1, \ldots, \lambda_{2d})$ for $\Lambda$ (as a $\mathbb Z$-module) and a transversal pair $(\ell, \ell')$ of $J$-Lagrangian planes such that 
        \begin{align*}
            \{ \lambda_1, \ldots, \lambda_{2d} \} \subset \ell \cup \ell'.
        \end{align*}
        \item[(d)] For some (and if so, for every) $\theta \in \Theta_\Lambda$, there exists a basis $(b_1, \ldots, b_{2d})$ for $\Ztd$ and a transversal pair $(\ell_\theta, \ell_\theta')$ of $\theta$-Lagrangian planes such that
        \begin{align*}
            \{ b_1, \ldots, b_{2d} \} \subset \ell_\theta \cup \ell_\theta'.
        \end{align*}
    \end{itemize}
\end{prop}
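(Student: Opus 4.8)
The strategy is to show that (c) and (d) are each equivalent to condition (a) of Proposition \ref{prop_separable_latties}, which by that proposition is already equivalent to ``$\Lambda$ is symplectically related to a separable lattice.'' The common mechanism is the elementary fact that, for $B \in \GLtdR$ with columns $B_1, \dots, B_{2d}$ and an antisymmetric matrix $\theta$, the $(i,j)$ entry of $B^T \theta B$ is the value of the form represented by $\theta$ on the pair $(B_j, B_i)$; hence a diagonal block of $B^T\theta B$ vanishes exactly when $\theta$ vanishes on the span of the corresponding columns. We also repeatedly use that right-multiplying a lattice matrix by an element of $\GLtdZ$ — in particular by a permutation matrix — produces another lattice matrix for the same lattice.

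\emph{(a) $\Leftrightarrow$ (c).} If (a) holds with $A^T J A = \left(\begin{smallmatrix} 0 & K \\ -K^T & 0 \end{smallmatrix}\right)$, let the $\lambda_i$ be the columns of $A$ and let $\ell, \ell'$ be the spans of the first $d$ and the last $d$ columns; these are $d$-dimensional and transversal (their sum is $\Rtd$ since $A$ is invertible, which forces $\ell \cap \ell' = 0$ by dimension count), and they are $J$-Lagrangian because the diagonal blocks of $A^T J A$ vanish. Conversely, given the data of (c), I would first run a counting argument: each $\lambda_i$ lies in exactly one of $\ell, \ell'$ (they meet only in $0$), and since the $\lambda_i$ lying in $\ell$ span a subspace $V \subseteq \ell$ with $V + V' = \Rtd = \ell \oplus \ell'$ ($V'$ the analogous span in $\ell'$), dimension count forces $V = \ell$ and $V' = \ell'$, so exactly $d$ of the $\lambda_i$ lie in each plane and form a basis of it. After permuting the $\lambda_i$ accordingly — replacing the lattice matrix $A = [\lambda_1 \mid \cdots \mid \lambda_{2d}]$ by $A\Pi$ with $\Pi \in \GLtdZ$ — the diagonal blocks of $A^T J A$ vanish (since $\sigma$ restricts to $0$ on $\ell$ and on $\ell'$), and invertibility of $A^T J A$ puts the off-diagonal block in $\GLdR$; this is (a).

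\emph{(a) $\Leftrightarrow$ (d).} Given (a), the matrix $\theta_0 := A^T J A = \left(\begin{smallmatrix} 0 & K \\ -K^T & 0 \end{smallmatrix}\right)$ lies in $\Theta_\Lambda$, the coordinate planes $\ell_x, \ell_\omega$ of \eqref{eq_std_Lagrangian_frame} are $\theta_0$-Lagrangian and transversal, and the standard basis of $\Ztd$ splits between them, so (d) holds for $\theta_0$. For an arbitrary $\theta \in \Theta_\Lambda$, write $\theta = M^T \theta_0 M$ with $M \in \GLtdZ$ (Remark \ref{remark_redundancy_of_lattice_matrices}); then $M^{-1}\ell_x$ and $M^{-1}\ell_\omega$ are $\theta$-Lagrangian and transversal and the columns of $M^{-1} \in \GLtdZ$ form the required basis of $\Ztd$, so (d) holds for every $\theta \in \Theta_\Lambda$ (hence for some, as $\Theta_\Lambda \neq \emptyset$). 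Conversely, if (d) holds for some $\theta \in \Theta_\Lambda$, choose a lattice matrix $C$ of $\Lambda$ with $C^T J C = \theta$ and let $P \in \GLtdZ$ have the basis vectors $b_i$ as columns; the same counting argument shows (after permuting the $b_i$) that $d$ of them span one $\theta$-Lagrangian plane and $d$ the other, so $P^T \theta P = (CP)^T J (CP)$ has vanishing diagonal blocks and $CP$ is a lattice matrix for $\Lambda$ of the form demanded by (a).

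I expect the only genuinely delicate step to be the counting argument — that a $\mathbb Z$-basis contained in the union of two transversal Lagrangian planes must split evenly into two bases, one for each plane. Everything else is bookkeeping with block matrices together with the stability of $\GLtdZ$ under the operations used. Combining the two equivalences above with Proposition \ref{prop_separable_latties} completes the proof.
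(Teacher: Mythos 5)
Your proposal is correct and uses essentially the same ingredients as the paper's proof: the identification of entries of $B^T\theta B$ with values of the form on columns, the change of basis $w^T\theta z = (Bw)^T J(Bz)$, and re-indexing of the basis vectors; the only organizational difference is that you prove (a)$\Leftrightarrow$(c) and (a)$\Leftrightarrow$(d) separately, whereas the paper proves (a)$\Leftrightarrow$(c) and then (c)$\Leftrightarrow$(d) by the change-of-basis argument. A small bonus of your write-up is that you make explicit the counting argument showing that exactly $d$ of the basis vectors must land in each Lagrangian plane and form a basis of it — the paper's ``we can re-index our basis elements'' silently relies on exactly this fact.
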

\begin{proof}
    Condition $(a)$ refers to Proposition \ref{prop_separable_latties}. We will show that $(c) \iff (d)$ and $(a) \iff (c)$.

    The equivalence $(c) \iff (d)$ amounts to a simple change of basis. If $A$ is any lattice matrix for $\Lambda$, then $\theta \coloneqq A^T J A \in \Theta_\Lambda$ and we have that
    \begin{align*}
        w^T \theta z = (Aw)^T J (Az) \quad \text{for all } z, w \in \Rtd.
    \end{align*}
    This identity immediately implies that a subspace $\ell_\theta \subset \Rtd$ is a $\theta$-Lagrangian plane if and only if $A(\ell_\theta)$ is a $J$-Lagrangian plane. Being invertible, $A$ preserves direct sums, so $(\ell_\theta, \ell_\theta')$ is a transversal pair of $\theta$-Lagrangian planes if and only if $(A(\ell_\theta), A(\ell'_\theta))$ is a transversal pair of $J$-Lagrangian planes. Finally, since $\Lambda = A \Ztd$, $(b_1, \ldots, b_{2d})$ is a basis for $\Ztd$ if and only if $(Ab_1, \ldots, Ab_{2d})$ is a basis for $\Lambda$. Since $A$ was an arbitrary lattice matrix for $\Lambda$, this shows that $(c)$ is equivalent to $(d)$ for an arbitrary $\theta \in \Theta_\Lambda$.

    If $(a)$ holds, i.e.\ if there is $A \in \GLtdR$ such that 
    \begin{align*}
       \Lambda = A\Ztd \quad \text{and} \quad A^T J A = \begin{pmatrix} 0 & K \\ - K^T & 0 \end{pmatrix} \text{ for some } K \in \GLdR, 
    \end{align*}
    then $(c)$ holds with $\lambda_1, \ldots, \lambda_{2d}$ equal to the columns of $A$ (in order from left to right) and
    \begin{align*}
        \ell &\coloneqq A(\ell_x) = \mathrm{span}_\mathbb R \{\lambda_1, \ldots, \lambda_d\} \\
        \text{and} \quad \ell' &\coloneqq A(\ell_\omega) = \mathrm{span}_\mathbb R \{\lambda_{d+1}, \ldots, \lambda_{2d}\}.
    \end{align*}
    Indeed, the condition that the diagonal blocks of $A^T J A$ vanish is precisely the condition that the standard symplectic form vanishes on these subspaces, so that $(\ell, \ell')$ is a transversal pair of $J$-Lagrangian planes.

    Conversely, if $(c)$ holds, then we can re-index our basis elements $\lambda_j$ so that
    \begin{align*}
        \ell = \mathrm{span}_\mathbb R \{ \lambda_1, \ldots, \lambda_d \} \quad \text{and} \quad \ell' = \mathrm{span}_\mathbb R \{ \lambda_{d+1}, \ldots, \lambda_{2d} \}.
    \end{align*}
    Taking $(\lambda_1, \ldots, \lambda_{2d})$ as the columns of $A$ implies that $\Lambda = A\Ztd$, and the fact that $\ell$ and $\ell'$ are $J$-Lagrangian planes implies that the diagonal blocks of $A^T J A$ vanish, i.e.\ $A^T J A = \left(\begin{smallmatrix} 0 & K \\ K' & 0 \end{smallmatrix}\right)$ for some $K, K' \in \GLdR$. Since $A^T J A$ is antisymmetric, we have $K' = -K^T$, so $(a)$ holds.
\end{proof}

\appendix

\section{Explicit Expressions for the $\rho$-Related Unitary Operators}
\label{appendix}

In this appendix, we explicitly construct all the unitary operators that are \related to a large class of symplectic matrices, namely the \textit{free} symplectic matrices. This is an adaptation of the presentation of metaplectic operators in de Gosson \cite{deGosson_book}. With $S \in \SptdR$ written in block form, 
\begin{align}
    \label{block_form}
    S = \begin{pmatrix}
        A & B \\ C & D
    \end{pmatrix} \quad \text{where } A, B, C, D \in \MdR,
\end{align}
we say that $S$ is \textit{free} if $\det B \neq 0$.

\begin{prop}
    \label{prop_free_metaplectic_ops}
    Let $S \in \SptdR$ be a free symplectic matrix written in block form, as in Equation \eqref{block_form}. Define the quadratic form
    \begin{align*}
        W(t, x) = \frac 12  t^T DB^{-1} t - x^T B^{-1} t + \frac 12 x^T B^{-1}A x \quad (t, x \in \Rd).
    \end{align*}
    Then, the unitary operator $U$ on $L^2(\Rd)$ defined by
    \begin{align}
        \label{eq_quadratic_Foruier} 
        U f(t) = \sqrt{|\det B^{-1}|} \int_{\Rd} f(x) e^{2 \pi i W(t, x)} \, dx
    \end{align}
    is \related to $S$, meaning that $U \rho(z) U^* = \rho(Sz)$ for all $z \in \Rtd$. 
\end{prop}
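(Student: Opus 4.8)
The plan is to exhibit $U$ as a composition of the elementary $\rho$-related operators supplied by Proposition~\ref{prop_generators}, so that unitarity and $\rho$-relatedness both come for free, and then to identify the resulting integral operator with the one in \eqref{eq_quadratic_Foruier}. The starting point is the factorization of a free symplectic matrix: if $S = \left(\begin{smallmatrix} A & B \\ C & D \end{smallmatrix}\right)$ with $\det B \neq 0$, then
\begin{align*}
    S = V_{-DB^{-1}} \, M_{B^{-1}} \, J \, V_{-B^{-1}A}.
\end{align*}
This is a legitimate product of the generators from Proposition~\ref{prop_generators} because $-DB^{-1}$ and $-B^{-1}A$ are symmetric: symplecticity of $S$ gives $B^TD = D^TB$ and $AB^T = BA^T$, and multiplying these by $B^{-1}$ and $B^{-T}$ on the appropriate sides yields $DB^{-1} = (DB^{-1})^T$ and $B^{-1}A = (B^{-1}A)^T$. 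To verify the identity itself I would multiply out the right-hand side: the $(1,1)$, $(1,2)$ and $(2,2)$ blocks come out to $A$, $B$ and $D$ directly, while the $(2,1)$ block comes out to $DB^{-1}A - B^{-T}$, which equals $C$ by the symplectic relation $AD^T - BC^T = I$ together with the symmetry of $B^{-1}A$.

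Given the factorization, Proposition~\ref{prop_generators} tells us that the unitary operators $U_{V_{-DB^{-1}}}$, $U_{M_{B^{-1}}}$, $U_J$ and $U_{V_{-B^{-1}A}}$ (defined via \eqref{def_of_U_V_and_U_M} and the Fourier transform) are $\rho$-related to the corresponding factors. By the composition rule noted immediately after Definition~\ref{rho_related}, the unitary operator $\widetilde U \coloneqq U_{V_{-DB^{-1}}} U_{M_{B^{-1}}} U_J U_{V_{-B^{-1}A}}$ is therefore $\rho$-related to $S$. It then only remains to check that $\widetilde U = U$. This is a direct computation: applying the four operators in turn to a Schwartz function $f$ (so that the integrals converge absolutely) amounts to multiplying by $e^{\pi i x^T B^{-1}A x}$, applying the Fourier transform, performing the dilation $t \mapsto B^{-1}t$ with the prefactor $\sqrt{|\det B^{-1}|}$, and finally multiplying by $e^{\pi i t^T DB^{-1}t}$. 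Collecting the exponents, and rewriting the cross term coming from the Fourier kernel as $-2\pi i\, x^T B^{-1}t$, the total phase is exactly $2\pi i\, W(t,x)$, so $\widetilde U f = Uf$; the identity extends to all of $L^2(\Rd)$ by density.

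The only real work is bookkeeping: confirming the $(2,1)$-block in the factorization, and then carefully threading the Fourier-transform convention and the dilation $t \mapsto B^{-1}t$ through the two chirp factors so that the symmetric quadratic terms and the cross term assemble into $W(t,x)$. No conceptual difficulty remains once the factorization $S = V_{-DB^{-1}} M_{B^{-1}} J V_{-B^{-1}A}$ is in hand.
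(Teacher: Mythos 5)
Your proposal is correct and follows essentially the same route as the paper: the appendix proves the factorization $S = V_{-DB^{-1}} M_{B^{-1}} J V_{-B^{-1}A}$ (Lemma \ref{appendix_lemma_generators}), establishes $\rho$-relatedness of the generator operators (Proposition \ref{appendix_prop_generators}), and then identifies the composition $U_{V_{-DB^{-1}}} U_{M_{B^{-1}}} U_J U_{V_{-B^{-1}A}}$ with the quadratic Fourier transform \eqref{eq_quadratic_Foruier} exactly as you do. The symmetry arguments for $DB^{-1}$ and $B^{-1}A$ and the identification of the $(2,1)$ block via $B^{-T} = DB^{-1}A - C$ also match the paper's computation.
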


We now wish to outline how to prove Proposition \ref{prop_free_metaplectic_ops} by using the generators of the symplectic group mentioned in Section \ref{prelim_symplectic}. Before we do so, we note the following factorization result, which together with Proposition \ref{prop_free_metaplectic_ops} implies that every symplectic matrix has a \related unitary operator (which moreover can be written as a composition of two operators with the form of $U$ in Equation \eqref{eq_quadratic_Foruier}, which are referred to as \textit{quadratic Fourier transforms}).

\begin{prop}
    \label{prop_two_free_symplectic}
    Every $S \in \SptdR$ can be written as a product of two free symplectic matrices.
\end{prop}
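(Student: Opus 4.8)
The plan is to reduce the claim to a statement about Lagrangian planes. Writing $S = \left(\begin{smallmatrix} A & B \\ C & D \end{smallmatrix}\right)$ in block form as in \eqref{block_form}, the columns of $\left(\begin{smallmatrix} B \\ D \end{smallmatrix}\right)$ span the Lagrangian plane $S\ell_\omega$ (notation of \eqref{eq_std_Lagrangian_frame}), and such a column lies in $\ell_\omega$ exactly when it is killed by $B$; since the columns of $\left(\begin{smallmatrix} B \\ D \end{smallmatrix}\right)$ are linearly independent ($S$ being invertible), this yields the dictionary
\begin{align*}
    S \text{ is free} \iff B \text{ invertible} \iff S\ell_\omega \cap \ell_\omega = \{0\}.
\end{align*}
So it suffices to find a Lagrangian plane $\mathfrak m$ transversal to \emph{both} $\ell_\omega$ and $S\ell_\omega$. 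Granting this, choose $S_1 \in \SptdR$ with $S_1 \ell_\omega = \mathfrak m$; then $S_1$ is free by the dictionary, and $S_2 \coloneqq S_1^{-1} S$ satisfies $S_2 \ell_\omega = S_1^{-1}(S\ell_\omega)$, which is transversal to $S_1^{-1}(\mathfrak m) = \ell_\omega$ because symplectic maps preserve transversality; hence $S_2$ is free as well, and $S = S_1 S_2$ is the desired factorization.

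Two standard inputs feed into this. First, $\SptdR$ acts transitively on the set of Lagrangian planes of $\Rtd$: extend a basis of a given Lagrangian plane to a symplectic basis of $\Rtd$ (symplectic Gram--Schmidt, cf.\ \cite{SilvaAna}) to obtain a symplectic matrix carrying $\ell_\omega$ to that plane. Second, for any fixed Lagrangian plane $L_0$, the set of Lagrangian planes transversal to $L_0$ is open and dense in the Lagrangian Grassmannian, i.e.\ the set of all Lagrangian planes in $\Rtd$ — open because transversality is the non-vanishing of a determinant, and dense because its complement, $\{ L : L \cap L_0 \neq \{0\} \}$, is nowhere dense (it is a subvariety of positive codimension). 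Intersecting the two dense open sets corresponding to $L_0 = \ell_\omega$ and $L_0 = S\ell_\omega$ then produces the plane $\mathfrak m$. If one wants a concrete version in the computational spirit of this appendix, one takes $\mathfrak m = \{ (x, Px) : x \in \Rd \}$ with $P = P^T \in \MdR$ (automatically transversal to $\ell_\omega$, and Lagrangian precisely because $P$ is symmetric), in which case one may take $S_1 = V_{-P} J = \left(\begin{smallmatrix} 0 & I \\ -I & P \end{smallmatrix}\right)$ and $S_2 = S_1^{-1} S = \left(\begin{smallmatrix} PA - C & PB - D \\ A & B \end{smallmatrix}\right)$; here $S_1$ is always free, and $S_2$ is free precisely when $PB - D$ is invertible, which is exactly the transversality of $\mathfrak m$ and $S\ell_\omega$.

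The one point with genuine content is the existence of $\mathfrak m$, equivalently of a symmetric $P$ with $\det(PB - D) \neq 0$. This cannot be arranged by a single determinant computation, since $B$, $C$ and $D$ may all be singular simultaneously (for instance for a symplectic matrix acting as a block of $J$ on some coordinates and as the identity on the rest); one really needs both the symplectic relations — which force the columns of $\left(\begin{smallmatrix} B \\ D \end{smallmatrix}\right)$ to be linearly independent — and the fact that $\det(PB - D) \neq 0$ is a generic condition on $P$, i.e.\ that the polynomial $P \mapsto \det(PB - D)$ on the space of symmetric matrices is not identically zero. Once that is in hand, the dictionary, the transitivity of the symplectic action on Lagrangian planes, and the block-matrix bookkeeping for $S_1$ and $S_2$ are all routine.
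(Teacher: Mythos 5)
The paper does not actually prove this proposition; it only cites de Gosson, so there is no internal argument to compare against. Your route through Lagrangian planes is essentially the standard one underlying that citation, and the reduction itself is correct and cleanly executed: the dictionary between freeness of $S$ and $S\ell_\omega \cap \ell_\omega = \{0\}$, the factorization $S = S_1 S_2$ with $S_1 = V_{-P}J$ and $S_2 = \left(\begin{smallmatrix} PA-C & PB-D \\ A & B\end{smallmatrix}\right)$, and the identification of freeness of $S_2$ with invertibility of $PB - D$ all check out.

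The gap is exactly where you locate it, and you do not close it. Your justification for density of the transversal Lagrangians — that the complement is ``a subvariety of positive codimension'' — is circular: positive codimension of $\{L : L \cap L_0 \neq \{0\}\}$ in the Lagrangian Grassmannian is equivalent to the non-vanishing of the very polynomial $P \mapsto \det(PB-D)$ whose non-vanishing you need. So the existence of $\mathfrak m$ is asserted rather than proved, and this is the entire content of the proposition. Here is one way to finish in the elementary spirit of the appendix. The block relation $B^TD = D^TB$ (from $S^TJS = J$) gives, for $\omega \in \ker B$ and arbitrary $\eta$, $(D\omega)^T(B\eta) = \omega^T B^T D \eta = (B\omega)^T D\eta = 0$, so $D(\ker B) \perp \operatorname{im} B$; since $\left(\begin{smallmatrix} B \\ D \end{smallmatrix}\right)$ is injective, $D|_{\ker B}$ is injective and hence $\Rd = \operatorname{im} B \oplus D(\ker B)$ orthogonally. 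Now take $P = \lambda \Pi$ with $\Pi$ the orthogonal projection onto $\operatorname{im} B$ (symmetric). If $(PB-D)\omega = 0$, write $\omega = \omega' + \omega''$ with $\omega'' \in \ker B$ and $\omega' \perp \ker B$; since $\Pi B\omega' = B\omega'$ and $\Pi D\omega'' = 0$, projecting the equation onto $\operatorname{im} B$ gives $(\lambda B - \Pi D)\omega' = 0$, which forces $\omega' = 0$ once $\lambda$ avoids the finitely many eigenvalues of $(B|_{(\ker B)^\perp})^{-1}(\Pi D)|_{(\ker B)^\perp}$ as an endomorphism of $(\ker B)^\perp$; projecting onto $D(\ker B)$ then gives $D\omega'' = 0$, hence $\omega'' = 0$. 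So $\det(PB-D) \neq 0$ for all but finitely many $\lambda$, and with that your argument is complete.
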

\begin{proof}
    See \cite[Theorem~60]{deGosson_book}. Further details can be found in \cite[Section~1.2.2]{masters}.
\end{proof}

Recall that we defined the symplectic matrices
\begin{align*}
    J \coloneqq \begin{pmatrix}
        0 & I \\ -I & 0 
    \end{pmatrix}, \quad V_P \coloneqq \begin{pmatrix}
        I & 0 \\ -P & I 
    \end{pmatrix} \quad \text{and} \quad M_L \coloneqq \begin{pmatrix}
        L^{-1} & 0 \\ 0 & L^T
    \end{pmatrix}
\end{align*}
for $P = P^T \in \MdR$ and $L \in \GLdR$, along with the unitary operators $U_J$, $U_{V_P}$ and $U_{M_L}$ on $L^2(\Rd)$ given by
\begin{align*}
    U_J f(t) &= \mathcal Ff(t)  = \int_{\Rd} f(x) e^{-2 \pi i x \cdot t} \, dx, \\
    U_{V_P} f(t) &= e^{-\pi i (Pt)\cdot t} f(t) \quad \text{and} \quad U_{M_L} f(t) = \sqrt{|\det L|} f(Lt).
\end{align*}

\begin{lemma}
    \label{appendix_lemma_generators}
    Let $S \in \SptdR$ be a free symplectic matrix written in block form, as in Equation \eqref{block_form}. Then, $DB^{-1}$ and $B^{-1}A$ are symmetric and 
    \begin{align*}
        S &= V_{-DB^{-1}} M_{B^{-1}} J V_{-B^{-1}A}.
    \end{align*}
    Thus, by Proposition \ref{prop_two_free_symplectic}, the collection 
    \begin{align*}
        \{ J \} \cup \{ V_P : P = P^T \in \MdR \} \cup \{ M_L : L \in \GLdR \}
    \end{align*}
    generates the symplectic group $\SptdR$.
\end{lemma}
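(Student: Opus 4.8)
The plan is to reduce everything to the block relations defining the symplectic group, verify the stated factorization by a short block computation, and then invoke Proposition~\ref{prop_two_free_symplectic}.

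First I would write out the defining relations $S^T J S = J$ and $S J S^T = J$ (both hold since $\SptdR$ is closed under transposition) in $d \times d$ blocks. With $S = \left(\begin{smallmatrix} A & B \\ C & D \end{smallmatrix}\right)$ these produce the standard identities, of which the relevant ones are
\begin{align*}
    B^T D = D^T B, \qquad A B^T = B A^T, \qquad A D^T - B C^T = I.
\end{align*}
Since $S$ is free, $B$ is invertible, so the first relation gives $DB^{-1} = B^{-T}D^T = (DB^{-1})^T$ and the second gives $B^{-1}A = A^T B^{-T} = (B^{-1}A)^T$; this settles the symmetry claims and shows $V_{-DB^{-1}}$ and $V_{-B^{-1}A}$ genuinely belong to the generating set.

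Next I would compute the product $V_{-DB^{-1}} M_{B^{-1}} J V_{-B^{-1}A}$ directly from the block forms
\begin{align*}
    V_{-DB^{-1}} = \begin{pmatrix} I & 0 \\ DB^{-1} & I \end{pmatrix}, \qquad M_{B^{-1}} = \begin{pmatrix} B & 0 \\ 0 & B^{-T} \end{pmatrix}, \qquad V_{-B^{-1}A} = \begin{pmatrix} I & 0 \\ B^{-1}A & I \end{pmatrix},
\end{align*}
multiplying from the right. The upper-left block comes out as $A$, the upper-right as $B$, the lower-right as $D$, and the lower-left as $DB^{-1}A - B^{-T}$; the only point that requires the block relations is checking that $DB^{-1}A - B^{-T} = C$. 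For that I would transpose $AD^T - BC^T = I$ to get $DA^T - CB^T = I$, solve for $C = DA^TB^{-T} - B^{-T}$, and rewrite $A^TB^{-T} = (B^{-1}A)^T = B^{-1}A$ using the symmetry just established, obtaining $C = DB^{-1}A - B^{-T}$ as needed. Hence the product equals $S$.

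Finally, since $B^{-1} \in \GLdR$ and $-DB^{-1}$, $-B^{-1}A$ are symmetric, the factorization expresses every free symplectic matrix as a word in $\{J\} \cup \{V_P : P = P^T \in \MdR\} \cup \{M_L : L \in \GLdR\}$; combined with Proposition~\ref{prop_two_free_symplectic}, which writes an arbitrary element of $\SptdR$ as a product of two free symplectic matrices, this yields the generation statement. I do not expect any real obstacle—it is all routine block arithmetic—but the one place where care is needed is matching the lower-left block to $C$, since this is where the symmetry of $B^{-1}A$ (and hence the freeness hypothesis $\det B \neq 0$) is genuinely used.
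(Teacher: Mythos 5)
Your proposal is correct and follows essentially the same route as the paper: both derive the block identities $B^TD = D^TB$, $AB^T = BA^T$, $DA^T - CB^T = I$ from symplecticity, use invertibility of $B$ to get the symmetry of $DB^{-1}$ and $B^{-1}A$, and verify the factorization by block multiplication, with the only step requiring the relations being the identification $DB^{-1}A - B^{-T} = C$. The paper merely organizes the same computation in the opposite direction (factoring $S$ first and then simplifying the middle block), so there is nothing substantive to add.
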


\begin{proof}
    Multiplying out shows that
    \begin{align}
        \label{temp_proof_1}
        S = \begin{pmatrix}
            I & 0 \\ DB^{-1} & I
        \end{pmatrix} \begin{pmatrix}
            B & 0 \\ 0 & DB^{-1}A - C 
        \end{pmatrix} \begin{pmatrix}
            0 & I \\ -I & 0
        \end{pmatrix} \begin{pmatrix}
            I & 0 \\ B^{-1} A & I
        \end{pmatrix}.
    \end{align}
    Using that $S^T J S = J$ and $J^{-1} = -J$, we find that
    \begin{align*}
        S^{-1} = -J S^T J = \begin{pmatrix}
            D^T & -B^T \\ -C^T & A^T
        \end{pmatrix}.
    \end{align*}
    The identity $S^{-1} S = I = S S^{-1}$ then implies that
    \begin{align*}
        BA^T - AB^T = 0 = D^TB - B^TD \quad \text{and} \quad DA^T - CB^T = I,
    \end{align*}
    from which we get
    \begin{align*}
        (B^{-1}A)^T &= B^{-1} (BA^T) B^{-T} = B^{-1}(AB^T)B^{-T} = B^{-1}A \\
        \text{and} \quad  (DB^{-1})^T &= B^{-T} (D^TB) B^{-1} = B^{-T}(B^TD) B^{-1} = DB^{-1},
    \end{align*}
    which proves that $B^{-1}A$ and $DB^{-1}$ are symmetric. We also find that
    \begin{align*}
        B^{-T} = (DA^T - CB^T)B^{-T} = D(B^{-1}A)^T - C = DB^{-1}A - C,
    \end{align*}
    which, upon insertion into Equation \eqref{temp_proof_1}, gives the result. 
\end{proof}

We now identify $U_J$, $U_{V_P}$ and $U_{M_L}$ as operators on $L^2(\Rd)$ which are $\rho$-related to $J$, $V_P$ and $M_L$, respectively.

\begin{prop}
    \label{appendix_prop_generators}
    The unitary operators $U_J$, $U_{V_P}$ and $U_{M_L}$ are \related to the generators from Lemma \ref{appendix_lemma_generators}, meaning that
    \begin{align*}
        U_J \rho(z) = \rho(Jz) U_J, \quad U_{V_P} \rho(z) = \rho(V_P z) U_{V_P} \quad \text{and} \quad U_{M_L} \rho(z) = \rho(M_L z) U_{M_L}
    \end{align*}
    for all $z \in \Rtd$. 
\end{prop}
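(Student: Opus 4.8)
The plan is to prove all three intertwining relations by direct computation, evaluating both sides of each identity at an arbitrary $f\in L^2(\Rd)$ and a point $t\in\Rd$. Throughout I would write $z=(x,\omega)\in\Rtd$ and use the explicit formula $\rho(z)f(s)=e^{-\pi i x\cdot\omega}e^{2\pi i s\cdot\omega}f(s-x)$, which comes from $\rho(z)=e^{-\pi i x\cdot\omega}M_\omega T_x$. The conceptual point, already flagged after Definition~\ref{rho_related}, is that the symmetrizing phase $e^{-\pi i x\cdot\omega}$ is exactly what makes the leftover phase factors cancel, so one must keep the \emph{symmetrized} shifts (not $\pi(z)$) throughout; substituting $\pi(z)$ for $\rho(z)$ would leave spurious scalars.

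For $M_L$, since $M_L z=(L^{-1}x,L^T\omega)$, the left-hand side $U_{M_L}\rho(z)f(t)=\sqrt{|\det L|}\,(\rho(z)f)(Lt)$ carries the factor $e^{2\pi i(Lt)\cdot\omega}$, and the right-hand side $\rho(M_Lz)U_{M_L}f(t)$ carries $e^{2\pi i t\cdot L^T\omega}=e^{2\pi i(Lt)\cdot\omega}$; the symmetrizing phases agree because $(L^{-1}x)\cdot(L^T\omega)=x\cdot\omega$, and the arguments agree because $U_{M_L}f(t-L^{-1}x)=\sqrt{|\det L|}\,f(Lt-x)$. For $V_P$, with $V_Pz=(x,\omega-Px)$ and $P=P^T$, both sides carry the translate $f(t-x)$, so it only remains to check that the accumulated phases coincide: on the left the exponent is $-\pi i(Pt)\cdot t-\pi i x\cdot\omega+2\pi i t\cdot\omega$, and on the right it is $-\pi i x\cdot(\omega-Px)+2\pi i t\cdot(\omega-Px)-\pi i(P(t-x))\cdot(t-x)$; expanding the last quadratic form and using the symmetry of $P$ (so that $t\cdot Px=(Px)\cdot t$ and $x\cdot Px=(Px)\cdot x$) makes the cross terms cancel and leaves exactly the left-hand exponent. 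For $J$, rather than expanding the Fourier integral I would argue operator-theoretically from the standard intertwining relations $\mathcal F T_x=M_{-x}\mathcal F$ and $\mathcal F M_\omega=T_\omega\mathcal F$ (valid on all of $L^2(\Rd)$ by density): these give $U_J\rho(z)=e^{-\pi i x\cdot\omega}T_\omega M_{-x}\mathcal F$, while $\rho(Jz)=e^{\pi i x\cdot\omega}M_{-x}T_\omega$ gives $\rho(Jz)U_J=e^{\pi i x\cdot\omega}M_{-x}T_\omega\mathcal F$, and the two agree by the elementary commutation $T_\omega M_{-x}=e^{2\pi i x\cdot\omega}M_{-x}T_\omega$.

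I do not expect a genuine obstacle, since every step is a finite computation; the one place that demands care is the phase bookkeeping in the $V_P$ case together with the systematic use of $\rho$ rather than $\pi$. A minor technical point for the $J$ case is that the pointwise integral formula for $\mathcal F$ is literally valid only on $L^1\cap L^2$, so I would handle that case in the operator form above (or, if one prefers a pointwise computation, verify it on Schwartz functions and extend by continuity, using that $U_J$, $\rho(z)$ and $\rho(Jz)$ are all bounded).
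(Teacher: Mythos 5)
Your proposal is correct and follows exactly the route the paper indicates: the paper's proof simply states that the identities are verified by applying both sides to an arbitrary $f \in L^2(\Rd)$ and evaluating at an arbitrary $t$, deferring the "straightforward but tedious" computations to the cited reference, and your computations (including the phase bookkeeping for $V_P$ and the operator-level treatment of the $J$ case via $\mathcal F T_x = M_{-x}\mathcal F$ and $\mathcal F M_\omega = T_\omega \mathcal F$) carry these out correctly. Your remark about handling the Fourier case on $L^1 \cap L^2$ or Schwartz functions and extending by density is a sensible technical precaution consistent with the paper's intent.
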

\begin{proof}
    The identities in this proposition can be verified by applying both sides to an arbitrary $f \in L^2(\Rd)$ and evaluating the result at an arbitrary $t \in \Rd$. The calculations are straightforward but tedious. They can be found in \cite[Lemma~6.1.2]{masters}.
\end{proof}

\begin{proof}[Proof of Proposition \ref{prop_free_metaplectic_ops}]
    By Lemma \ref{appendix_lemma_generators} and Proposition \ref{appendix_prop_generators}, the unitary operator 
    \begin{align*}
        U \coloneqq U_{V_{-DB^{-1}}} U_{M_{B^{-1}}} U_J U_{V_{-B^{-1}A}}
    \end{align*}
    is \related to $S$. Using the definitions of these operators we find that, for any $f \in L^2(\Rd)$,
    \begin{align*}
         U_J U_{V_{-B^{-1}A}} f(t) = \int_{\Rd} e^{\pi i x \cdot B^{-1}A x} f(x) e^{-2\pi i x \cdot t} \, dx,
    \end{align*}
    and hence
    \begin{align*}
        Uf(t) = e^{\pi i t \cdot DB^{-1} t} \sqrt{|\det B^{-1}|} \int_{\Rd} e^{\pi i x \cdot B^{-1}A x} f(x) e^{-2\pi i x \cdot B^{-1} t} \, dx,
    \end{align*}
    which we recognize as the operator in Equation \eqref{eq_quadratic_Foruier}.
\end{proof}

\section{The Pre-Iwasawa Factorization}
\label{sec_pre_Iwasawa}

\begin{prop}[The Pre-Iwasawa Factorization]
\label{prop_pre_Iwasawa}
Write $S \in \SptdR$ in block form, 
\begin{align*}
    S = \begin{pmatrix}
        A & B \\ C & D
    \end{pmatrix} \quad \text{where } A, B, C, D \in \MdR.
\end{align*}
Then, $AA^T + BB^T$ is positive definite, and if we define
\begin{align*}
X \coloneqq (AA^T + BB^T)^{-1}, \quad Y \coloneqq -(CA^T + DB^T)X, \quad P \coloneqq X^{1/2}A \quad \text{and} \quad Q \coloneqq X^{1/2}B,
\end{align*}
we have
\begin{align}
\label{eq_pre_Iwasawa}
S = \begin{pmatrix} I & 0 \\ -Y & I \end{pmatrix} \begin{pmatrix} X^{-1/2} & 0 \\ 0 & X^{1/2} \end{pmatrix} \begin{pmatrix} P & Q \\ -Q & P \end{pmatrix},
\end{align}
where each factor is symplectic.
\end{prop}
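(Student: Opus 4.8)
The plan is to verify the factorization \eqref{eq_pre_Iwasawa} by a direct block computation, peeling off the three factors from the right and picking up the symplecticity of each factor along the way almost for free. First I would record the block identities forced by $S^TJS = J$ (equivalently $SJS^T = J$): the matrices $AB^T$ and $CD^T$ are symmetric and $DA^T - CB^T = AD^T - BC^T = I$. For the positive definiteness of $AA^T + BB^T$, note it is plainly symmetric and positive semidefinite, and if $v^T(AA^T+BB^T)v = |A^Tv|^2 + |B^Tv|^2 = 0$ then the row vector $v^T\begin{pmatrix} A & B\end{pmatrix}$ vanishes, so $(v^T,0)S = 0$ and hence $v = 0$ since $S$ is invertible. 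Thus $X \defeq (AA^T+BB^T)^{-1}$ is symmetric positive definite, and its positive square root $X^{1/2}$ is symmetric, invertible, and satisfies $X^{1/2}X^{1/2} = X$.

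Next, with $P \defeq X^{1/2}A$ and $Q \defeq X^{1/2}B$, the identity $PP^T + QQ^T = X^{1/2}(AA^T + BB^T)X^{1/2} = I$ holds \emph{by the choice of} $X$, while $PQ^T = X^{1/2}(AB^T)X^{1/2} = X^{1/2}(BA^T)X^{1/2} = QP^T$ by the symmetry of $AB^T$. A one-line computation then gives $OJO^T = J$ for $O \defeq \left(\begin{smallmatrix} P & Q \\ -Q & P\end{smallmatrix}\right)$, so $O$ is symplectic (in fact a symplectic rotation, $O \in \UtdR$). The middle factor $\left(\begin{smallmatrix} X^{-1/2} & 0 \\ 0 & X^{1/2}\end{smallmatrix}\right)$ is the matrix $M_L$ associated with $L = X^{1/2}$ (using $(X^{1/2})^T = X^{1/2}$), hence symplectic, and multiplying it with $O$ produces the symplectic matrix $N \defeq \left(\begin{smallmatrix} A & B \\ -XB & XA\end{smallmatrix}\right)$.

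Finally I would check, using only $AB^T = BA^T$ and $(AA^T+BB^T)X = I$, that $N^{-1} = \left(\begin{smallmatrix} A^TX & -B^T \\ B^TX & A^T\end{smallmatrix}\right)$ (it is enough to exhibit this as a one-sided inverse, since $N$ is invertible). Then
\[
    SN^{-1} = \begin{pmatrix} A & B \\ C & D\end{pmatrix}\begin{pmatrix} A^TX & -B^T \\ B^TX & A^T\end{pmatrix} = \begin{pmatrix} (AA^T + BB^T)X & -AB^T + BA^T \\ (CA^T + DB^T)X & DA^T - CB^T\end{pmatrix} = \begin{pmatrix} I & 0 \\ -Y & I\end{pmatrix},
\]
with $Y = -(CA^T + DB^T)X$, using the symmetry of $AB^T$ and the identity $DA^T - CB^T = I$. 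Since $SN^{-1}$ is a product of symplectic matrices it is symplectic, and a unipotent block–lower–triangular matrix of this shape is symplectic precisely when its off–diagonal block is symmetric; hence $Y = Y^T$, which is the remaining assertion. Multiplying back, $S = (SN^{-1})\,N$ is exactly \eqref{eq_pre_Iwasawa}, with all three factors symplectic.

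The step I expect to demand the most care is the bookkeeping: invoking the right block identities in each place (those from $S^TJS = J$ for the top row of $SN^{-1}$, and the definition of $X$ together with $AB^T = BA^T$ elsewhere), and remembering that $X^{1/2}$ may be slid past the factors only because it is symmetric. The one genuinely non-mechanical observation is that $PP^T + QQ^T = I$ is \emph{forced} by the definition $X = (AA^T + BB^T)^{-1}$ — this is what makes the rightmost factor a symplectic rotation and drives the whole argument; everything else is routine block multiplication.
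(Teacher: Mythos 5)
Your proposal is correct and follows essentially the same route as the paper's proof: both get positive definiteness of $AA^T+BB^T$ from the invertibility of $S$, extract the same block identities ($AB^T=BA^T$, $DA^T-CB^T=I$) from the symplectic condition, check that the rightmost factor is a symplectic rotation via $PP^T+QQ^T=I$ and $PQ^T=QP^T$, and then verify the factorization by multiplying $S$ on the right by the inverse of the trailing factor(s) and reading off the unipotent block. Your added remark that symplecticity of the resulting unipotent factor forces $Y=Y^T$ makes explicit something the paper leaves implicit, but it does not change the argument.
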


\begin{proof}
$AA^T + BB^T$ is just the upper left $d \times d$ block of the trivially positive definite matrix $SS^T$. Considering the inner products $z^T SS^T z$ with $z = (x, 0) \in \Rtd$ straightforwardly leads to the conclusion that $AA^T + BB^T$ is positive definite as well. Thus, $X^{-1}$ and $X^{1/2}$ are well-defined.

We note that $BA^T - AB^T = 0$ and $DA^T - CB^T = I$. This is just the identity $I = SS^{-1}$ and the observation that $S^T J S = J \iff S^{-1} = -JS^TJ$ (a bit more detail can be found in the proof of Lemma \ref{appendix_lemma_generators}). These identities will be used at multiple points of this proof.

Writing $O \coloneqq \begin{pmatrix} P & Q \\ -Q & P \end{pmatrix} =  \begin{pmatrix} X^{1/2} & 0 \\ 0 & X^{1/2} \end{pmatrix} \begin{pmatrix} A & B \\ -B & A \end{pmatrix}$, we find that
\begin{align}
\label{eq_JOTJ}
-J O^T J =
    -J \begin{pmatrix} A^T & -B^T \\ B^T & A^T \end{pmatrix} J \begin{pmatrix} X^{1/2} & 0 \\ 0 & X^{1/2}
\end{pmatrix} = \begin{pmatrix} A^T & -B^T \\ B^T & A^T \end{pmatrix} \begin{pmatrix} X^{1/2} & 0 \\ 0 & X^{1/2} \end{pmatrix},
\end{align}
and consequently
\begin{align*}
    O(-JO^TJ) = \begin{pmatrix} X^{1/2} & 0 \\ 0 & X^{1/2} \end{pmatrix} \begin{pmatrix} AA^T + BB^T & -AB^T + BA^T \\ -BA^T + AB^T & BB^T + AA^T  \end{pmatrix} \begin{pmatrix} X^{1/2} & 0 \\ 0 & X^{1/2} \end{pmatrix} = I,
\end{align*}
This shows that $O^{-1} = -JO^TJ$, so $O$ is symplectic. It is straightforward to verify that the middle matrix in the decomposition \eqref{eq_pre_Iwasawa} is symplectic (it is just the symplectic matrix $M_{X^{1/2}}$, in the notation of Section \ref{prelim_symplectic}). Once we have shown that \eqref{eq_pre_Iwasawa} holds, the fact that $\SptdR$ is a group implies that the remaining matrix $\left(\begin{smallmatrix} I & 0 \\ -Y & I \end{smallmatrix}\right)$ is symplectic as well.

Multiplying both sides of \eqref{eq_pre_Iwasawa} by $O^{-1} = - JO^T J$ from the right (and using \eqref{eq_JOTJ} for the left hand side), we find that it is equivalent to
\begin{align*}
    \begin{pmatrix} A & B \\ C & D \end{pmatrix} \begin{pmatrix} A^T & -B^T \\ B^T & A^T \end{pmatrix} = \begin{pmatrix} I & 0 \\ -Y & I \end{pmatrix} \begin{pmatrix} X^{-1} & 0 \\ 0 & I \end{pmatrix},
\end{align*}
which is quickly verified by calculating the left hand side:\
\begin{align*}
    \begin{pmatrix} A & B \\ C & D \end{pmatrix} \begin{pmatrix} A^T & -B^T \\ B^T & A^T \end{pmatrix} = \begin{pmatrix}
        AA^T + BB^T & -AB^T + BA^T \\ CA^T + DB^T & -CB^T + DA^T
    \end{pmatrix} = \begin{pmatrix}
        X^{-1} & 0 \\ -YX^{-1} & I
    \end{pmatrix}.
\end{align*}
\end{proof}

\bibliographystyle{abbrv}

\end{document}